\numberwithin{equation}{section}
\def\eps{{\epsilon}}
\def\C{{\mathbb C}}
\def\D{{\mathbb D}}
\def\R{{\mathbb R}}
\def\S{{\mathbb S}}
\def\Z{{\mathbb Z}}
\def\CP{{\mathbb C\mathbb P}}
\def\ov{\overline}
\def\pl{\parallel}
\theoremstyle{plain}
\newtheorem{lemma}{Lemma}[section]
\newtheorem{proposition}[lemma]{Proposition}
\newtheorem{conjecture}[lemma]{Conjecture}
\theoremstyle{definition}
\newtheorem{remark}[lemma]{Remark}
\theoremstyle{plain}
\newtheorem{theorem}[lemma]{Theorem}
\theoremstyle{definition}
\newtheorem{definition}[lemma]{Definition}
\theoremstyle{remark}
\title[Two-parameter unfolding of a parabolic point]{Two-parameter unfolding of a parabolic point of a vector field in $\C$ fixing the origin \footnote{The author is supported by NSERC in Canada.}}
\author[C. Rousseau]{Christiane Rousseau}
\address{Christiane Rousseau, D\'epartement de
math\'ematiques et de statistique, Universit\'e de Montr\'eal, C.P. 6128,
Succursale Centre-ville, Montr\'eal (Qc), H3C 3J7, Canada.}
\email{rousseac@dms.umontreal.ca}
\subjclass[2010]{37F75, 32M25, 32S65, 34M99} 
\begin{document}

\date{\today}

\begin{abstract} 
In this paper we describe the bifurcation diagram of the$2$-parameter family of vector fields  $\dot z = z(z^k+\eps_1z+\eps_0)$ over $\CP^1$ for $(\eps_1,\eps_0)\in \C^2$. There are two kinds of bifurcations: bifurcations of parabolic points and bifurcations of homoclinic loops through infinity. The latter are studied using the tool of the periodgon introduced in a particular case in \cite{CR}, and then generalized in \cite{KR}. We apply the results to the bifurcation diagram of a generic germ of 2-parameter analytic unfolding preserving the origin of the vector field $\dot z = z^{k+1} +o(z^{k+1})$ with a parabolic point at the origin.

\end{abstract}

\maketitle

\section{Introduction}

This paper is part of a larger program to explore the dynamics of polynomial vector fields on $\C$ depending on a small number of parameters, which appear as \lq\lq models\rq\rq\ of the unfolding of a parabolic point $\dot z = \frac{dz}{dt}=z^{k+1}+o(z^{k+1})$ of codimension $k$ (i.e. multiplicity $k+1$). The paper \cite{CR} studied the case $k=1$, and the paper \cite{KR}, the case $k=2$. When $k=1$, the singular points of $\dot z = z^{k+1} +\eps$ are located at the vertices of a regular polygon. For $k=2$, the family of vector fields $\dot z = z^{k+1}+\eps_1 z+\eps_0$ shows the transition between the case $\eps_1=0$ described in \cite{CR} and the case $\eps_0=0$ which has one singular point at the origin surrounded by $k$ singular points located  at the vertices of a regular polygon. When one moves from $\eps_0=0$ to $\eps_1=0$, the inner point moves outwards in one direction, and the $k$ outer singular points rotate monotonically in inverse directions on both sides of the direction followed by the inner point, so as to leave space for the inner point. This is in line with Khovanskii's fewnomial theory \cite{Kh}, which asserts that polynomials with very few monomials have roots with very equidistributed arguments (and hence real fewnomials have few real roots, regardless of their degree).

One motivation for studying these families of polynomial vector fields is that their time-one maps are good models (at least topologically) for the unfoldings of germs of diffeomorphisms with a parabolic fixed point of codimension $k$: 
\begin{equation}f(z) = z + z^{k+1}+o(z^{k+1}).\label{parabolic_k}\end{equation} A good topological model for a generic $1$-parameter unfolding is the time-one map of $\dot z= z^{k+1} +\eps_0$. While the full dynamics that can occur in perturbations of such a diffeomorphism can only be described in generic unfoldings depending on $k$ parameters (see \cite{R1}), it is not uncommon that a germ of codimension $k$ is only embedded in a generic family with less than $k$ parameters; and it is natural to study what particular dynamics occurs in such a generic family. A similar problem is that of studying the unfoldings of germs of diffeomorphisms with a parabolic fixed point of codimension $k$: \begin{equation}g(z) = \exp(2\pi i \, p/q) z + \frac1{q}z^{kq+1}+o(z^{kq+1}).\label{parabolic_periodic}\end{equation} The study is often done by considering $f= g^{\circ q}(z)$, which has the form of \eqref{parabolic_k} for some new $K=kq$. A significant difference though is the structurally stable fixed point, which can be kept at the origin by an analytic change of coordinate. In the case of \eqref{parabolic_periodic}, a generic $1$-parameter unfolding of $g$ yields a $1$-parameter unfolding of $f$, which is topologically the time-one map of $\dot z= z^{K+1} +\eps_1z$. 
Hence, it is natural to integrate the two cases in a $2$-parameter family and study the family of vector fields $\dot z= z^{K+1} +\eps_1z +\eps_0$. This is what has been done in \cite{KR}. A different problem is to study what occurs in generic $2$-parameter unfoldings $g_\eps$ of  $g$ defined in \eqref{parabolic_periodic}, through the $2$-parameter family $f_\eps=g_\eps^{\circ q}$. Here the dynamics is described topologically by the time-one map of the 2-parameter family of polynomial vector fields
 \begin{equation} \label{vector_field_q} 
 \dot Z =Q_\eps(Z)= Z(Z^{qk}+\eta_1Z^q+\eta_0),\qquad\eps=(\eta_1,\eta_0)\in \C^2.
 \end{equation}
This paper focuses on describing  the bifurcation diagram of the real dynamics of this family, which  is invariant under rotations of order $q$. 

Hence, changing to $(z,\eps_1,\eps_0)= \left(q^{\frac1k} Z^q, q^{\frac{k-1}{k}}\eta_1,q\eta_0\right)$ reduces the family  \eqref{vector_field_q} to the form
\begin{equation} \label{vector_field} 
 \dot z =P_\eps(z)= z(z^k+\eps_1z+\eps_0),\qquad\eps=(\eps_1,\eps_0)\in \C^2,
 \end{equation}
 and it suffices to study the bifurcation diagram of \eqref{vector_field}.
 
 Douady and Sentenac pioneered the study of polynomial vector fields on $\C$: they   introduced in \cite{DS} a two part invariant composed on the one hand of a combinatorial invariant in the form of a tree graph with $k+1$ vertices and, on the other hand,  of an analytic invariant given by a vector in $\mathbb H^k$. This invariant characterizes \emph{Douady-Sentenac generic} (or DS-generic) polynomial vector fields in $\C$, i.e. polynomial vector fields with simple points and no homoclinic loop through the pole at $\infty$: the polynomial vector field with a given Douady-Sentenac invariant is unique when monic and centered (the sum of the roots is zero). In the study of the vector field $\dot z = z^{k+1} + \eps$, the paper \cite{CR} introduced for each $\eps\neq0$, a new invariant, the \emph{periodgon}, or \emph{polygon of the periods}. The periodgon is a polygon with $k+1$ sides, one for each fixed point, which completely characterizes the polynomial vector field up to a rotation of order $k$, provided it is monic and centered. The periodgon was later generalized in \cite{KR} to all \emph{generic} polynomial vector fields $\dot z = dz/dt = P(z)$, where generic is understood in a different sense defined below. The periodgon is a polygon whose edges are given by oriented vectors corresponding to the periods of the different singular points of the vector field, in a proper order. It bounds a simply connected closed region in the Riemann surface of the time variable (which is a translation surface). It is defined on open sets of generic values in the parameter space for which:
 \begin{itemize} 
 \item all singular points are simple;
 \item given any singular point $z^*$ and $\delta$ such that $e^{i\delta}P'(z^*)\in i\R^+$, i.e. $z^*$ is a center for the rotated vector field $\dot z = e^{i\delta} P(z)$, then the domain of the center, called the \emph{periodic domain} of $z^*$,  is bounded by a unique homoclinic loop through infinity. 
 \end{itemize} 
These open sets are separated by surfaces in parameter space where the periodic domain of at least one singular point is bounded by several homoclinic loops through infinity. Geometrically, the periodgon is the image in $t$-space of the complement in $\CP^1$ of the union of the periodic domains of all singular points. 
 
 The bifurcations of a polynomial vector field on $\C$ can be of two types:
 \begin{itemize} \item Bifurcations of multiple singular points: these occur on the discriminant locus, an algebraic variety of real codimension 2;
 \item Bifurcations of homoclinic loops through $\infty$: these occur precisely when two vertices of the periodgon are linked by a horizontal segment lying inside it.
 \end{itemize} 
 
Hence the periodgon is a powerful tool to describe the bifurcation diagram. But it is not very easy to compute the periodgon: the difficulty is to determine the order of the edges. This order is the cyclic order around $\infty$ of the periodic domains at the singular points. The order changes when a periodic domain is bounded by more than one homoclinic loop through $\infty$: at these situations, the periodgon still exists, but it is not uniquely defined since several orders are possible.  Hence the present paper is also motivated by the need to better understand the periodgon through the study of the system \eqref{vector_field}.  

Among the questions we want to consider are the following:
\medskip

\noindent{\bf Question 1.} How many open sets in parameter space are needed to describe all generic vector fields? 
This seems to be less than the number of open sets for the Douady-Sentenac description. For instance, for the family $\dot z = z^{k+1} +\eps$, $2(k+1)$ open sets are needed for the Douady-Sentenac description, while one open set is enough with the periodgon description (see \cite{CR}). 	One open set with slits is conjectured to be sufficient with the periodgon description for the family $\dot z = z^{k+1} +\eps_1z+\eps_0$ (see \cite{KR}), while two are necessary for $\dot z= z^3+\eps_1z+\eps_0$ with the Douady-Sentenac point of view  (see \cite{R2}). For the family \eqref{vector_field} we conjecture that $k-1$ open sets are sufficient: these domains are exactly the ones to which we can reduce our study using the symmetries of the system. When $k$ is even, each open set has an additional slit. 

\medskip

\noindent{\bf Question 2.} Is the periodgon planar or does its projection on $\C$ have self-intersection? 
The periodgon of $\dot z = z^{k+1} +\eps$ is planar, while that of $\dot z = z^{k+1} +\eps_1z+\eps_0$ was conjectured to be planar. Here again we conjecture that the periodgon is planar. \medskip

The paper is organized as follows. In Section~\ref{sec:phase_portrait} we start the study of the bifurcation diagram via classical tools. In Section~\ref{sec:periodgon} we study the periodgon of \eqref{vector_field}. In Section~\ref{sec:bif_diag} we give the bifurcation diagram.  We end up with perspectives. 

\section{Study of the phase portrait of \eqref{vector_field}}\label{sec:phase_portrait} 

\subsection{Generalities on polynomial vector fields on $\C$} 
Let $\dot z = P(z)$ be a polynomial vector field of degree $k+1\geq 2$ on $\C$. Then the point at infinity is a pole of order $k-1$ with $k$ attracting and $k$ repelling separatrices. Any finite simple singular point $z_j$ is either a radial node, or a focus, or a center. In particular, it is a center if $P'(z_j)\in i\R$. Since the periodic domain of a center is always bounded by homoclinic loop(s) through infinity, 
the only bifurcations that can occur are: 
\begin{enumerate} 
\item Bifurcations of parabolic point (multiple point) when $P'(z_j)=0$. Then $P(z_j)= c(z-z_j)^{\ell+1} + o\left((z-z_j)^{\ell+1}\right)$, and $\ell$ is called the codimension of the parabolic point; 
\item Bifurcations of homoclinic loop through $\infty$, when an attracting separatrix and a repelling separatrix coallesce; 
\item And combinations of the previous types.
\end{enumerate}  

\subsection{Symmetries of the family}\label{sec:symmetries}
We consider the action of the transformation:
\begin{equation} (z,t, \eps_1, \eps_0)\mapsto (Z, T, \eta_1, \eta_0)= (Az, A^{-k}t, A^{k-1}\eps_1, A^k\eps_0)\label{linear_transf}\end{equation}
on the vector field $\tfrac{dz}{dt} = P_\eps(z)$ in \eqref{vector_field} with $t\in\R$, changing it to $\tfrac{dZ}{dT} = P_\eta(Z)$. 
We will use the particular cases: 
\begin{itemize}
\item $A=r\in \R^+$. This rescaling allows to suppose that $(\eta_1,\eta_0)\in \S^3=\{\|\eta\|=1\}$, for the \lq\lq norm\rq\rq\ introduced below in \eqref{norm}. 
\item $A= e^{\frac{2\pi i m}k}$. This gives invariance of the vector field under rotations of order $k$, modulo reparametrization. 
\item $A= e^{\frac{\pi i (2m+1)}k}$. This gives invariance under rotations of exact order $2k$, modulo reparametrization and reversing of time. 
\end{itemize}

\begin{proposition}\label{prop:symmetry} Let $\sigma$ be the  reflection with respect to the line $e^{\frac{m}{k}\pi i}\R$. 
\begin{enumerate} 
\item Let $P_\eps$ and $P_{\eps'}$ be of the form \eqref{vector_field} for $\eps=(\eps_1,\eps_0)$ and $\eps'=(\eps_1',\eps_0')$. If \begin{equation}\begin{cases} \eps_1'=e^{-\frac{2m}{k}\pi i}\ov{\eps}_1, &m\in\Z_{2k},\\
\eps_0'=\ov{\eps}_0, \end{cases}\label{cond:symmetric}\end{equation} 
 then  the vector fields  $P_\eps$ and $P_{\eps'}$ are conjugate under the change $z\mapsto \sigma(z)$.
 \item In particular, when $\eps_0\in \R$ and $\arg(\eps_1) = -\frac{\pi m}{k}$, $m\in\Z_{2k}$, then the system is symmetric with respect to the line $e^{\frac{im\pi}k}\R$, i.e. invariant under  $z\mapsto \sigma(z)$.
\end{enumerate}
\end{proposition}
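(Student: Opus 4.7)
The plan is to prove both parts by direct substitution of the change of variable $Z = \sigma(z)$ into the ODE \eqref{vector_field}.

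First I would make $\sigma$ explicit: a reflection through the origin with axis at angle $\alpha = m\pi/k$ is given by $z\mapsto e^{2i\alpha}\ov{z}$, hence $\sigma(z) = e^{2m\pi i/k}\ov{z}$. This is an antiholomorphic involution. Since the time variable $t$ is real, for any trajectory $z(t)$ of $\dot z = P_\eps(z)$ the curve $Z(t) := \sigma(z(t))$ satisfies $\dot Z = e^{2m\pi i/k}\,\ov{P_\eps(z(t))}$.

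Next, I would expand $\ov{P_\eps(z)} = \ov{z}^{\,k+1} + \ov{\eps}_1\,\ov{z}^{\,2} + \ov{\eps}_0\,\ov{z}$ and substitute $\ov{z} = e^{-2m\pi i/k}Z$. Multiplying the result by the prefactor $e^{2m\pi i/k}$, the $Z^{k+1}$ and $Z$ monomials retain coefficient $1$ and $\ov{\eps}_0$ respectively, because $e^{2m\pi i/k}\cdot (e^{-2m\pi i/k})^{k+1} = e^{-2m\pi i} = 1$ and $e^{2m\pi i/k}\cdot e^{-2m\pi i/k} = 1$; while the $Z^2$ monomial picks up an extra phase $e^{-2m\pi i/k}$. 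Collecting,
\begin{equation*}
\dot Z = Z\bigl(Z^k + e^{-2m\pi i/k}\,\ov{\eps}_1\, Z + \ov{\eps}_0\bigr),
\end{equation*}
which is \eqref{vector_field} with the parameters $\eps'$ given by \eqref{cond:symmetric}. This establishes part (1).

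Part (2) is the fixed-locus case $\eps' = \eps$: the condition $\eps_0 = \ov{\eps}_0$ forces $\eps_0\in\R$, while $\eps_1 = e^{-2m\pi i/k}\ov{\eps}_1$ rearranges to $2\arg(\eps_1) \equiv -2m\pi/k \pmod{2\pi}$, i.e.\ $\arg(\eps_1) = -m\pi/k$ for some $m\in\Z_{2k}$. The whole argument is computational; the only place requiring care is the bookkeeping of exponential factors under the antiholomorphic substitution, and I expect no deeper obstacle.
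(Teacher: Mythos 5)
Your proposal is correct and follows essentially the same route as the paper: the paper's proof is exactly the direct substitution $Z=e^{\frac{2m}{k}\pi i}\ov z$ for real $t$, yielding $\dot Z=Z(Z^k+e^{-\frac{2m}{k}\pi i}\ov\eps_1 Z+\ov\eps_0)$, which you carry out with the same bookkeeping of phases. The only difference is that you spell out the intermediate steps and the fixed-locus argument for part (2), which the paper leaves implicit.
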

\begin{proof} 
For real $t$, the reflection $\sigma:z\mapsto Z=e^{\frac{2m}{k}\pi i}\ov z$ sends the vector field \eqref{vector_field}  to 
$$\frac{dZ}{dt}=Z(Z^k + \ov\eps_1e^{-\frac{2m}{k}\pi i}Z+\ov\eps_0).$$ 
\end{proof}

\begin{proposition} Let $\sigma$ be the  reflection with respect to the line $e^{\frac{2m+1}{2k}\pi i}\R$.\begin{enumerate} 
\item Let $P_\eps$ and $P_{\eps'}$ be of the form \eqref{vector_field} for $\eps=(\eps_1,\eps_0)$ and $\eps'=(\eps_1',\eps_0')$. If \begin{equation}\begin{cases} \eps_1'=-e^{-\frac{(2m+1)}{k}\pi i}\ov{\eps}_1,  &m\in \Z_{2k},\\
\eps_0'=-\ov{\eps}_0,\end{cases}\label{cond:reversible}\end{equation}
then $P_\eps$ and $P_{\eps'}$ are conjugate under the change $(z,t)\mapsto (\sigma(z),-t)$ 
\item In particular, when $\eps_0\in i\R$ and  $\arg(\eps_1) = -\frac{\pi}2 -  \frac{2m+1}{2k}\pi$, $m\in \Z_{2k}$, then the system is reversible with respect to the line $e^{\frac{2m+1}{2k}\pi i}\R$, i.e.  invariant under $(z,t)\mapsto (\sigma(z),-t)$.
\end{enumerate}
\end{proposition}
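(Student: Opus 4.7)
The plan is to mirror the argument of Proposition~\ref{prop:symmetry} but with the twist that the reflection now has to be combined with time reversal. The key computation is to push the vector field forward under the antiholomorphic involution $\sigma:z\mapsto Z= e^{\frac{2m+1}{k}\pi i}\overline{z}$ (which is the reflection across $e^{\frac{2m+1}{2k}\pi i}\R$) and read off the resulting equation.

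First I would fix $t\in \R$, so that $\tfrac{dZ}{dt}= e^{\frac{2m+1}{k}\pi i}\,\overline{\tfrac{dz}{dt}}$. Substituting $\dot z = z(z^k+\eps_1 z+\eps_0)$ and expressing $\bar z = e^{-\frac{2m+1}{k}\pi i}Z$, the main algebraic observation is that
\[
\bar z^{\,k}= e^{-(2m+1)\pi i}\,Z^k = -Z^k,
\]
so the $k$-th power contributes a sign $-1$. Carrying out the substitution yields
\[
\frac{dZ}{dt}= -Z\Bigl(Z^k - \bar\eps_1 e^{-\frac{2m+1}{k}\pi i}Z-\bar\eps_0\Bigr).
\]
Reversing time $t\mapsto -t$ absorbs the overall minus sign and gives $\tfrac{dZ}{d(-t)}=Z(Z^k+\eps_1' Z+\eps_0')$ with exactly the $\eps'$ of \eqref{cond:reversible}. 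This proves (1).

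For (2), the reversibility condition is $\eps = \eps'$ under \eqref{cond:reversible}. The equation $\eps_0=-\bar\eps_0$ forces $\eps_0\in i\R$. For $\eps_1 = |\eps_1|e^{i\theta}$, the equation $\eps_1=-e^{-\frac{2m+1}{k}\pi i}\bar\eps_1$ becomes $e^{2i\theta}= -e^{-\frac{2m+1}{k}\pi i}= e^{i(\pi-\frac{2m+1}{k}\pi)}$, which gives $\theta\equiv \tfrac{\pi}{2}-\tfrac{2m+1}{2k}\pi\pmod{\pi}$; shifting $m\mapsto m+k$ within $\Z_{2k}$ corresponds to the $\pi$-ambiguity in $\theta$, so the statement $\arg(\eps_1)= -\tfrac{\pi}{2}-\tfrac{2m+1}{2k}\pi$, $m\in\Z_{2k}$, covers all admissible arguments.

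The only place to be careful is tracking the phase: the factor $e^{\frac{2m+1}{k}\pi i}$ appears both from $\sigma$ itself and, raised to the power $k$ on the $z^k$ term, as $e^{(2m+1)\pi i}=-1$. This sign is exactly what forces the need to combine the reflection with a time reversal (as opposed to the even-exponent case $m/k$ in Proposition~\ref{prop:symmetry}). Beyond that, the argument is a direct one-line substitution.
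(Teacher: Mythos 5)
Your computation is correct and follows exactly the paper's route: push the vector field forward under $\sigma:z\mapsto e^{\frac{2m+1}{k}\pi i}\bar z$ for real $t$, observe that $\bar z^{\,k}$ picks up the sign $e^{-(2m+1)\pi i}=-1$, and absorb that sign by reversing time, which yields precisely the parameters of \eqref{cond:reversible}. Your extra care in part (2) about the $\pi$-ambiguity in $\arg\eps_1$ being absorbed by $m\mapsto m+k$ in $\Z_{2k}$ is a correct elaboration of a point the paper leaves implicit.
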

\begin{proof} 
For real $t$, the reflection $\sigma:z\mapsto Z=e^{\frac{2m+1}{k}\pi i}\ov z$ and the time reversal $t\mapsto T=-t$ sends the vector field \eqref{vector_field}  to 
$$\frac{dZ}{dT}=Z(Z^k - \ov\eps_1e^{-\frac{2m+1}{k}\pi i}Z -\ov\eps_0).$$ 
\end{proof}

\subsection{The conic structure of the bifurcation diagram and normalizations}

The bifurcation diagram  of the real phase portrait of \eqref{vector_field} has a conic structure provided by the rescaling \eqref{linear_transf} for $A\in \R^+$. 
It is therefore sufficient to describe its intersection with a sphere $\S^3=\{\|\eps\|=1\}$, where
\begin{equation}\label{norm}
\|\eps\|=\left(\frac{|\eps_0|}{k-1}\right)^{\frac{1}{k}}+\left(\frac{|\eps_1|}{k}\right)^{\frac{1}{k-1}}.
\end{equation}
If $\eps\neq 0$, we can scale $z$ so that $\|\eps\|=1$ in \eqref{vector_field}. Then it is natural to write $|\eps_1|=k(1-s)^{k-1}$ and $|\eps_0|=(k-1)s^k$, with $s\in[0,1]$. The sphere $\S^3=\{\|\eps\|=1\}$ can be parameterized by three real coordinates: one radial coordinate $s\in [0,1]$, and two angular coordinates, namely the arguments of $\eps_0$ and $\eps_1$. 
But both arguments act on the position of the singular points. Hence we rather choose one angular parameter $\theta$ that will control the relative position of singular points, and a second angular parameter $\alpha$ that will be a rotational parameter, namely we write the system in the form:
\begin{equation}\label{3_par}
\dot z = z\left(z^k -k(1-s)^{k-1}e^{-i (k-1) \alpha} z+(k-1)s^ke^{i(\theta-k\alpha)}\right).
\end{equation} 
This corresponds to 
\begin{equation} {\eps_0}=(k-1)s^ke^{i(\theta-k\alpha)},\quad {\eps_1}=-k(1-s)^{k-1}e^{-i (k-1)\alpha},\quad \|\eps\|=1,\label{angular_coordinates}\end{equation}
 with $\theta, \alpha\in[0,2\pi]$,
For all parameter values the system has a singular point at $z_0=0$. The two extreme values $s=0$ and $s=1$ correspond to the $1$-parameter vector fields $\dot z = z(z^k+\eps_1 z)$ and
$\dot z= z(z^k +\eps_0)$. In the latter there are $k$ singular points at the vertices of a regular $k$-gon and one singular point at the origin, while in the former there are $k-1$ singular points at the vertices of a regular $(k-1)$-gon and a double singular point at the origin. Moving $s$ from $0$ to $1$ is the transition from one to the other. The singular point $z_0$ is double when $\eps_0=0$, corresponding to $s=0$ because it has merged with an extra singular point $z_1$. The parameter $s$ controls the migration of $z_1$ outwards when moving from $\eps_0=0$  to $\eps_1=0$. When $s$ increases, the movement of the outer singular points $z_2, \dots, z_k$ is very smooth so as to create the exact needed space for the inner singular point $z_1$ moving outwards.
The parameter $\theta$ determines the direction in which $z_1$ moves outwards. The parameter $\alpha$ is a rotation parameter. It plays no role in the relative position of the singular points (these rotate as a rigid solid). On the other hand,  it is responsible for the monotonic movements of the separatrices of the pole at infinity producing the bifurcations of homoclinic loops.

Using  $s$ and $\theta$ as polar coordinates, we will describe the dynamics over the parameter disk $|se^{i\theta}|\leq 1$.

\subsection{Geometry of the parameter space}\label{sec:Geometry}
The parameter space is the 3-sphere $\S^3=\{\|\eps\|=1\}$, which is a quotient of $[0,1]\times (\S^1)^2$, on which we use coordinates $(s,\theta,\alpha)$ defined in \eqref{angular_coordinates}. The quotient consists in identifying 
\begin{align}\begin{split}
(s,\theta,\alpha)&\sim(s,\theta+2\pi,\alpha)\sim(s,\theta,\alpha+2\pi)\sim(s,\theta+\tfrac{2\pi}{k-1},\alpha+\tfrac{2\pi}{k-1}),\\
(0,\theta,\alpha)&\sim(0,0,\alpha)\sim(0,0,\alpha+\tfrac{2\pi}{k-1}),\\ 
(1,\theta,\alpha)&\sim(1, 0,\alpha-\tfrac{\theta}{k})\sim(1, 0,\alpha-\tfrac{\theta}{k}+\tfrac{2\pi}{k}),\end{split}\label{rel:quotient}
\end{align}
for all $s,\theta,\alpha$.
We naturally find a generalization of the  Hopf fibration of $\S^3$ over $\S^2$ given by the projection $(s,\theta,\alpha)\mapsto(s,\theta\mod \frac{2\pi}{k} )$, with $\S^2$ being the quotient of $[0,1]\times \S^1$ by identifying $(s,\theta)\sim (s,\theta+\frac{2\pi}{k})$ for all $s,\theta$, and $(0,\theta)\sim (0,0)$, $(1,\theta)\sim (1, 0)$ for all $\theta$.  Here $s\in (0,1)$ parametrizes a family of tori in $\S^3$, where each torus is filled by a family of $(k,k-1)$-torus knots, 
each knot corresponding to constant $(s,\theta)=(s_0,\theta_0)$  and being parametrized by $\alpha$. 
%(see Figure~\ref{Knot}) 
For $s=0$, the torus degenerates to a circle parametrized by $\alpha$ and covered $k-1$ times, and for $s=1$, the torus degenerates to a circle parameterized by $\alpha$ and covered $k$ times.

The only bifurcations are homoclinic connections of two separatrices of $\infty$, and the two bifurcations of parabolic point. 
The former, of real codimension $1$, is studied through the periodgon. Several bifurcations can occur simultaneously, yielding higher order bifurcations. The boundaries of the surfaces of homoclinic connections in parameter space occur along the higher order bifurcations, including the parabolic point bifurcations.

\subsection{Bifurcation of parabolic points}

Parabolic points are important because they organize the dynamics and the bifurcations of homoclinic loops. A parabolic point of codimension $\ell$ has $2\ell$ \emph{sepal zones}, i.e.  connected regions  filled by trajectories having their $\alpha$-limit and $\omega$-limit at the parabolic point.
These sepal zones go to $\infty$ and coincide with $2\ell$ saddle sectors of $\infty$. Hence, the corresponding sectors are called the \emph{sepal sectors} of $\infty$ (see Figure~\ref{elliptic_sectors}). The sepal zones are bounded by separatrices. Each boundary of a sepal zone is the limit of a homoclinic loop through $\infty$ that circles around a unique singular point.   Note that there are always an even number of non sepal sectors of infinity between two sepal sectors  (see Figure~\ref{parabolic_cod1_origin}).

\begin{figure}\begin{center}
\includegraphics[width=5cm]{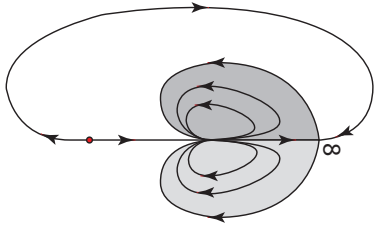}\caption{The two sepal zones of the parabolic point of $\dot z = z^3+z^2$.}\label{elliptic_sectors}\end{center}\end{figure}

\begin{proposition} \begin{enumerate}
\item There are two kinds of bifurcations of parabolic point of codimension 1. 
\begin{itemize} 
\item Bifurcation of parabolic point at the origin when $\eps_0=0$ (see Figure~\ref{parabolic_cod1_origin}). 
The two sepal zones at the parabolic point correspond to two almost opposite sepal sectors at $\infty$, namely if $m_1$ and $m_2$ are the number of adjacent non sepal sectors between the two sepal sectors, then $|m_1-m_2|\leq 2$. 
%For small positive $s$ and $\theta=0$, the periodgon is given in Figure~\ref{periodgon_s=0}. 
\item The other type occurs when the discriminant of $z^k+\eps_1z+\eps_0$ vanishes and $\eps_0,\eps_1\neq0$. The discriminant is given by 
\begin{equation} 
\Delta(\eps_1,\eps_0)
=(-1)^{\lfloor\frac{k}2\rfloor}(k-1)^{k-1}k^k\left[\big(\tfrac{\eps_0}{k-1}\big)^{k-1}-\big(-\tfrac{\eps_1}{k}\big)^k\right].\label{Delta}
\end{equation}
The intersection of $\Delta=0$ with the sphere $\S^3$ is a $(k,k-1)$ torus knot. 
Using \eqref{angular_coordinates}, $\Delta$ vanishes if and only if $s=\frac12$ and $\theta=\frac{2\pi j}{k-1}$,
in which case $z=\frac12e^{\frac{2\pi j}{k-1}}$ is a generic parabolic point (double root) of \eqref{vector_field}  (see Figure~\ref{parabolic_cod1}). 
The two sepal zones at the parabolic point correspond to two adjacent sepal sectors at $\infty$. 
\end{itemize}
\item A bifurcation of parabolic point of codimension $k$ occurs when $\eps_0=\eps_1=0$, in which case the origin is a singular point of multiplicity $k+1$. \end{enumerate}\end{proposition}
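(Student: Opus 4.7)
The proof splits into three parts matching the three items, and in each case the algebraic identification of the multiple root is routine; the substantive content is the combinatorial description of the sepal sectors at $\infty$.

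For item (1), first bullet, factoring $P_\eps(z) = z^2(z^{k-1}+\eps_1)$ shows that for $\eps_1\neq 0$ the origin is a double root and the other $k-1$ roots are simple, so the origin is a generic codimension $1$ parabolic point. Its leading term $\eps_1 z^2$ produces the two separatrices $\arg z \equiv -\arg\eps_1\pmod{\pi}$ bounding the two sepal zones. These separatrices extend globally to $\infty$, where the $2k$ separatrices of $P_\eps$ lie along the equispaced rays $\arg z = \tfrac{j\pi}{k}$. Because the two origin-separatrices are antipodal and the infinity-separatrices are spaced by $\pi/k$, the two sepal sectors at $\infty$ are diametrically opposite up to at most a single standard sector on each side, which gives $|m_1-m_2|\leq 2$.

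For the second bullet, write $Q(z) = z^k+\eps_1 z+\eps_0$. The $k-1$ roots $\zeta$ of $Q'(z) = kz^{k-1}+\eps_1$ satisfy $\zeta^{k-1} = -\eps_1/k$, whence $Q(\zeta) = \tfrac{k-1}{k}\eps_1\zeta+\eps_0$; substituting into $\Delta(Q) = (-1)^{k(k-1)/2}k^k\prod_\zeta Q(\zeta)$ and simplifying gives \eqref{Delta}. Plugging \eqref{angular_coordinates} into $(\eps_0/(k-1))^{k-1} = (-\eps_1/k)^k$, the modulus equation reduces to $s^{k(k-1)} = (1-s)^{k(k-1)}$ forcing $s = \tfrac12$, while the argument equation (using $\arg\eps_1 = \pi-(k-1)\alpha$) reduces to $(k-1)\theta\equiv 0\pmod{2\pi}$, i.e.\ $\theta = \tfrac{2\pi j}{k-1}$ for some $j\in\{0,\dots,k-2\}$. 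At $s = \tfrac12$ the roots of $Q'$ are $z = \tfrac12 e^{-i\alpha}\omega$ for $(k-1)$-th roots of unity $\omega$, and imposing $Q(z) = 0$ together with $\omega^k = \omega$ selects $\omega = e^{2\pi ij/(k-1)}$; the double root is therefore $z = \tfrac12 e^{2\pi ij/(k-1)-i\alpha}$, generic of codimension $1$ since $Q''(z) = k(k-1)z^{k-2}\neq 0$. For the $(k,k-1)$-torus knot statement, on $\S^3$ the locus $\Delta = 0$ lies in the torus $\{s = \tfrac12\}$ as the union of $k-1$ circles $\theta = \tfrac{2\pi j}{k-1}$ parametrized by $\alpha$, and the identification $(\theta,\alpha)\sim(\theta+\tfrac{2\pi}{k-1},\alpha+\tfrac{2\pi}{k-1})$ from \eqref{rel:quotient} glues them into a single closed curve with winding numbers $(k-1,k)$. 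Finally, the two sepal sectors at $\infty$ are adjacent because the double root emerges from the collision of two consecutive vertices of the outer regular $(k-1)$-gon as $s\to\tfrac12$, so the two merging periodic domains sit between two neighbouring separatrices of $\infty$.

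For item (2), $\eps_0 = \eps_1 = 0$ yields $P_\eps(z) = z^{k+1}$, a zero of multiplicity $k+1$ at the origin, hence codimension $k$. The hardest step in (1) is the combinatorial placement of the sepal sectors at $\infty$: the algebra gives the parabolic point immediately, but following each local origin-separatrix out to a specific sector at $\infty$ requires a global argument. I would handle this by a continuity/deformation argument from the boundary slices $s = 0$ (essentially explicit from the factorization $z^2(z^{k-1}+\eps_1)$) and $s = 1$ (the family $\dot z = z(z^k+\eps_0)$ studied in \cite{CR}), together with the symmetries of Proposition~\ref{prop:symmetry}.
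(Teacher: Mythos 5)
The paper states this proposition without any proof, so there is no argument of the author's to compare yours against; judged on its own merits, your proposal handles all the algebraic content correctly and completely: the factorization $z^2(z^{k-1}+\eps_1)$, the derivation of \eqref{Delta} from $\mathrm{disc}(Q)=(-1)^{k(k-1)/2}k^k\prod_{Q'(\zeta)=0}Q(\zeta)$ with $Q(\zeta)=\tfrac{k-1}{k}\eps_1\zeta+\eps_0$, the reduction of $\Delta=0$ to $s=\tfrac12$ and $\theta=\tfrac{2\pi j}{k-1}$, the location and genericity of the double root, the gluing of the $k-1$ circles into a single closed curve on the torus $\{s=\tfrac12\}$, and item (2).

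The gap is in the two genuinely dynamical claims: $|m_1-m_2|\le 2$ in the first bullet and adjacency of the sepal sectors in the second. Knowing that the two local separatrix directions at the origin are antipodal ($\arg z\equiv-\arg\eps_1\bmod\pi$) and that the $2k$ separatrices of $\infty$ are asymptotically equispaced says nothing about \emph{which} sector at $\infty$ each sepal zone occupies: a sepal zone is a global object and its boundary separatrices can a priori reach $\infty$ in any direction, so your sentence deriving $|m_1-m_2|\le2$ is a restatement of the claim, not a proof. A workable global argument would normalize $\arg\eps_1$ so that the field is conjugation-symmetric (Proposition~\ref{prop:symmetry}), which makes the two sepal zones mirror images and forces $m_1,m_2$ to be even with $m_1+m_2=2k-2$, and would then pin down the sector by locating the outer singular points (here $\lambda_j=k(k-1)z_j$ up to the normalization, so $z_j$ is attracting iff $\mathrm{Re}\,z_j<0$) and using that each sepal-zone boundary is a limit of homoclinic loops encircling a single singular point — essentially the argument the paper uses later for the case $s=0$ in the proof of Theorem~\ref{thm:periodgon}. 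Your proposed fallback, a deformation from the slices $s=0$ and $s=1$, cannot close this gap: the first bullet \emph{is} the slice $s=0$ (so the deformation is circular), and at $s=1$ there is no parabolic point. For the second bullet the adjacency claim is likewise asserted rather than derived, and the mechanism you invoke is off: by Proposition~\ref{KR}(4), at $\theta=0$, $s=\tfrac12$ the colliding roots are the inner root $z_1$ (born from the origin) and the outer root $z_k$, not two consecutive vertices of the outer $(k-1)$-gon; and one must still argue that the two sepal zones of the resulting parabolic point occupy the two adjacent sectors at $\infty$ previously occupied by the periodic domains of the colliding points.
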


\begin{figure}\begin{center} 
\subfigure[$k=4$]{\includegraphics[width=3.4cm]{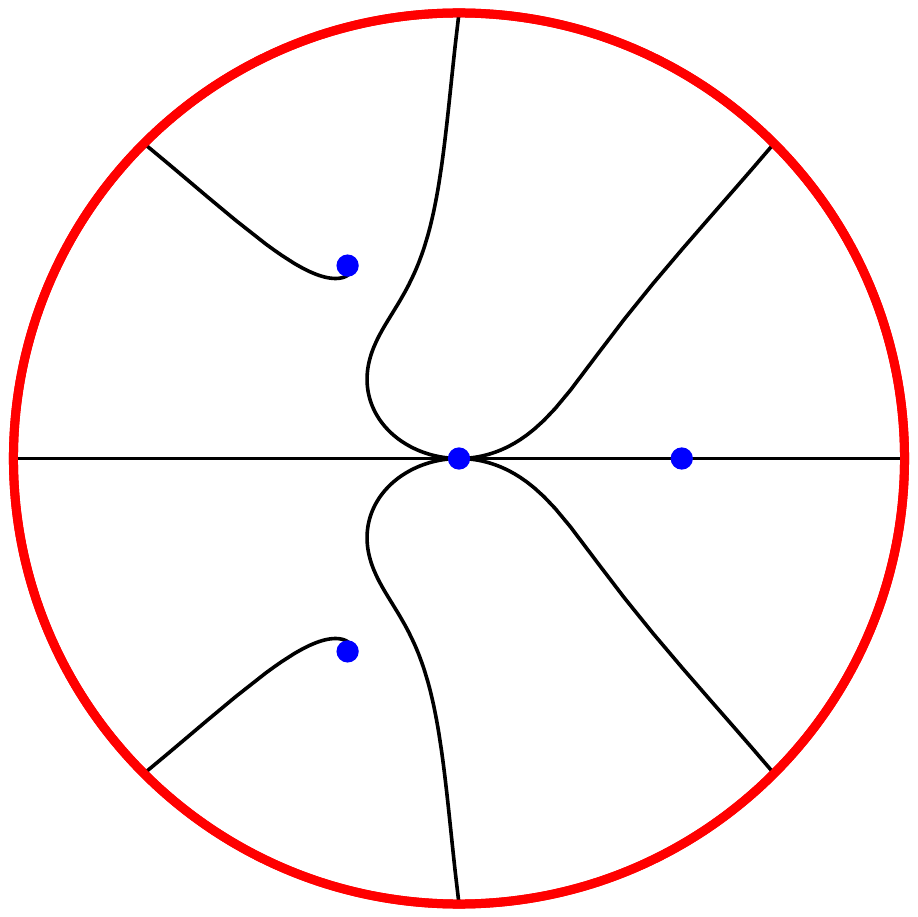}}\quad\subfigure[$k=5$]{\includegraphics[width=3.4cm]{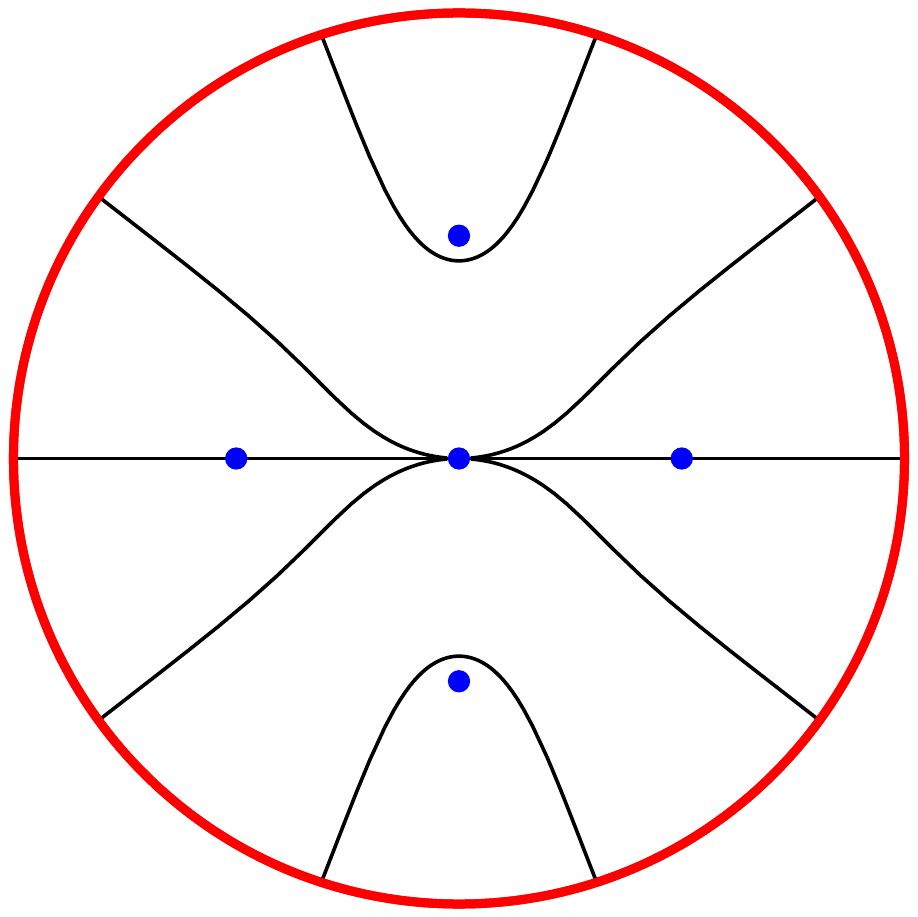}}\quad\subfigure[$k=6$]{\includegraphics[width=3.4cm]{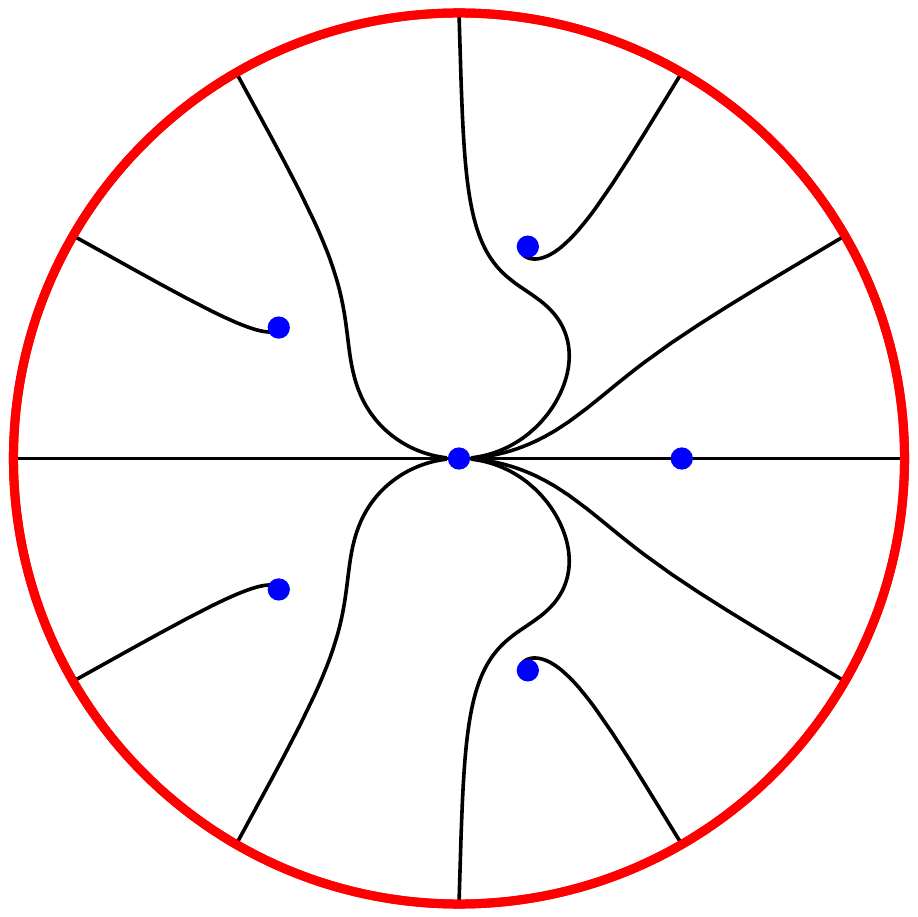}} \subfigure[$k=7$]{\includegraphics[width=3.4cm]{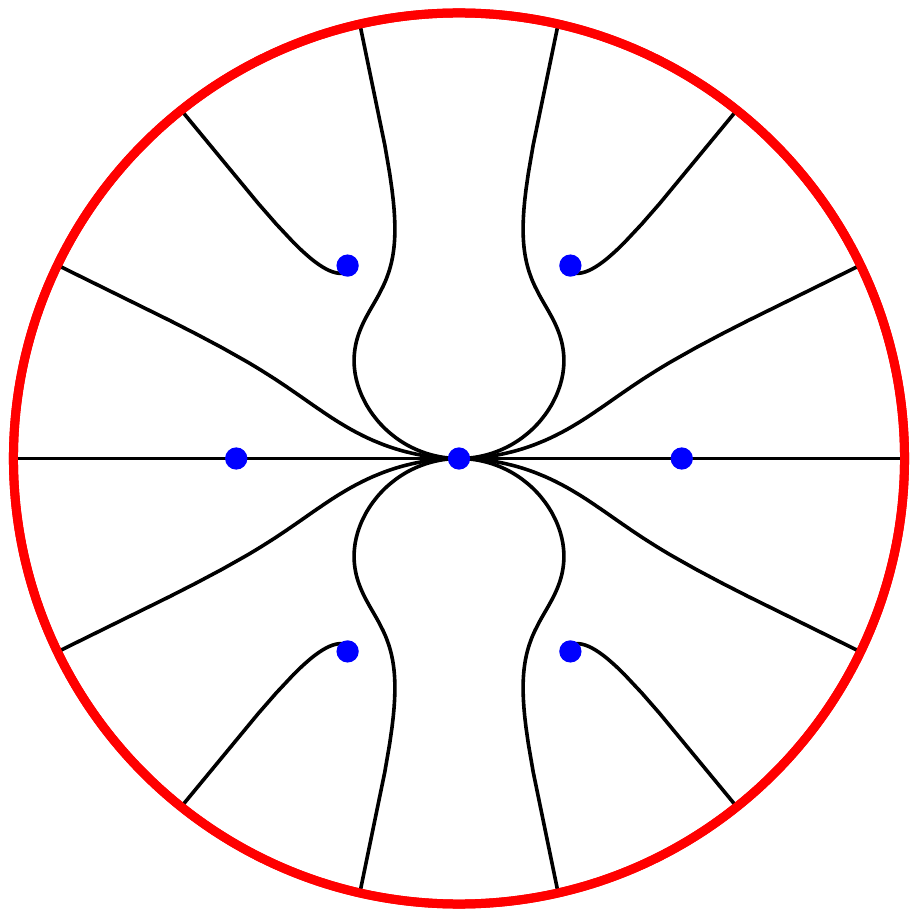}} \quad\subfigure[$k=8$]{\includegraphics[width=3.4cm]{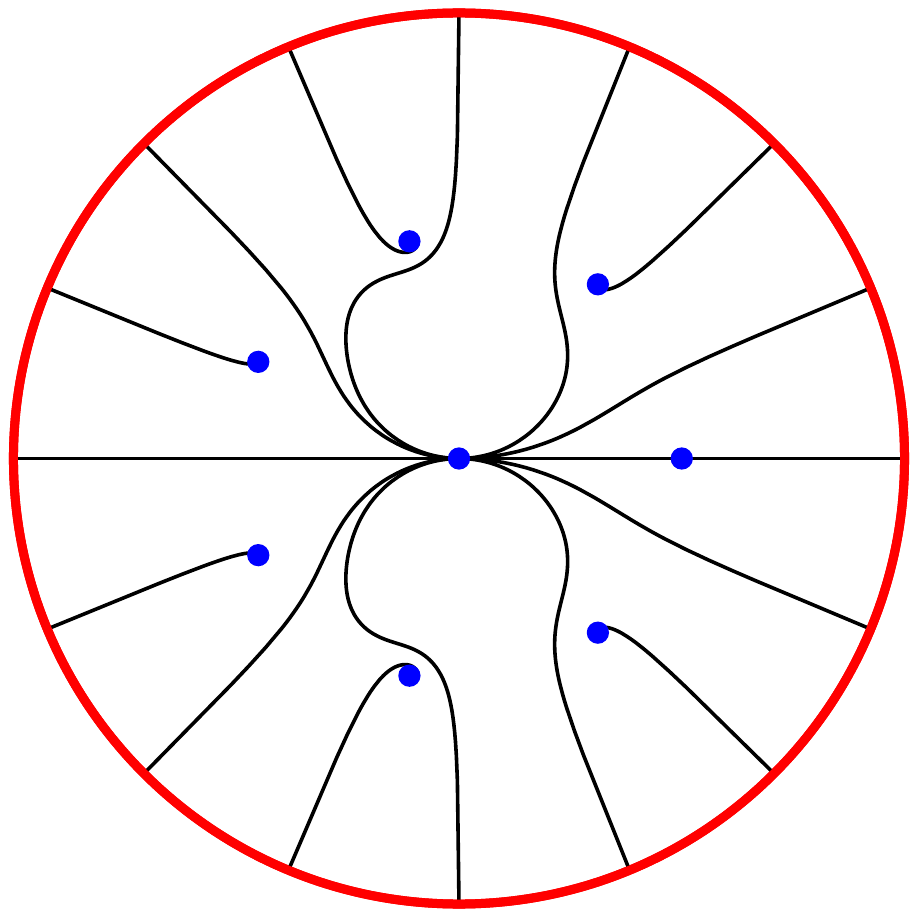}}\quad\subfigure[$k=9$]{\includegraphics[width=3.4cm]{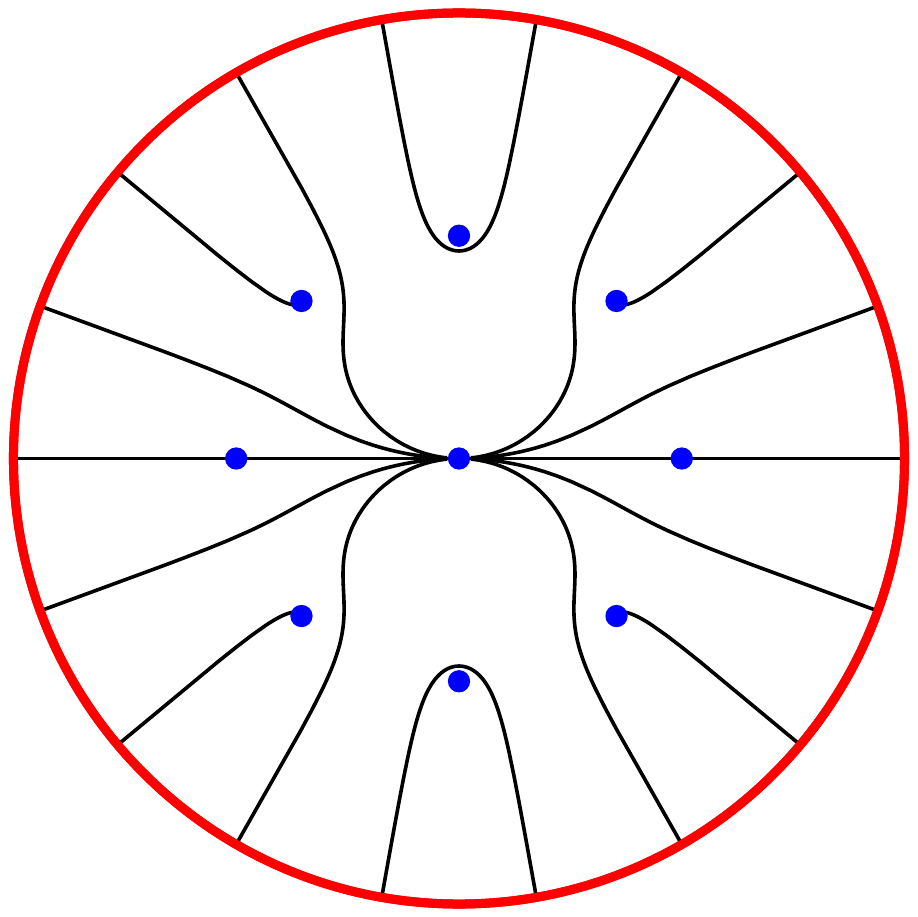}} \subfigure[$k=10$]{\includegraphics[width=3.4cm]{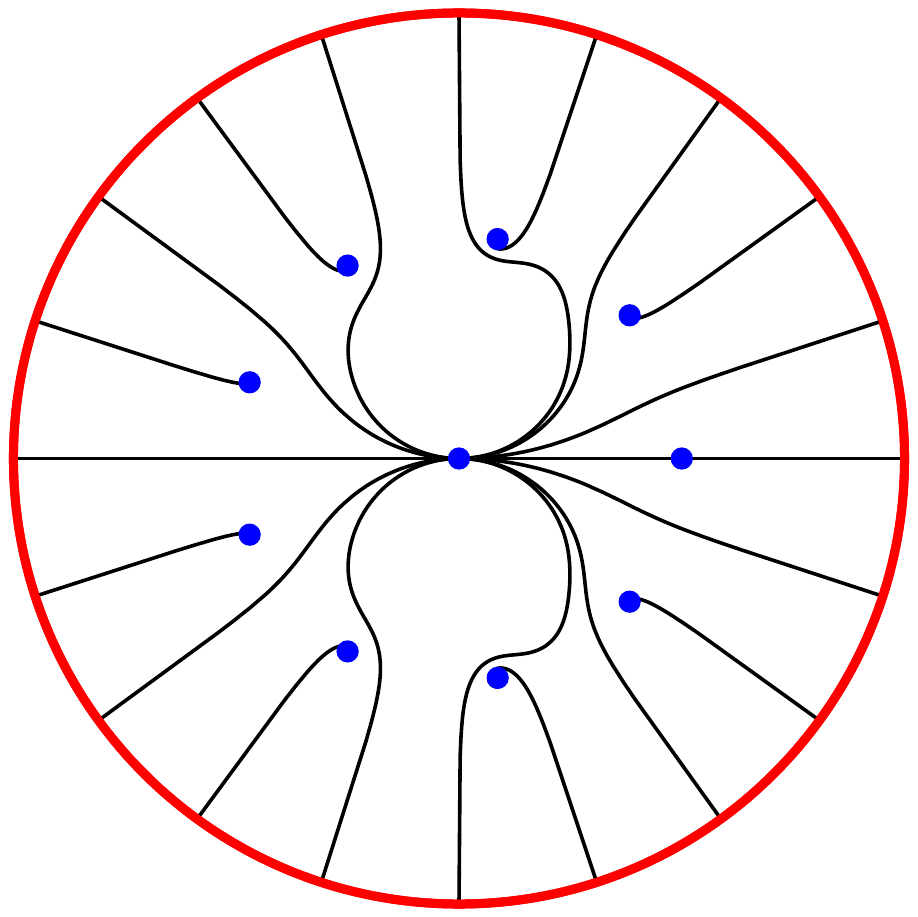}}\quad\subfigure[$k=11$]{\includegraphics[width=3.4cm]{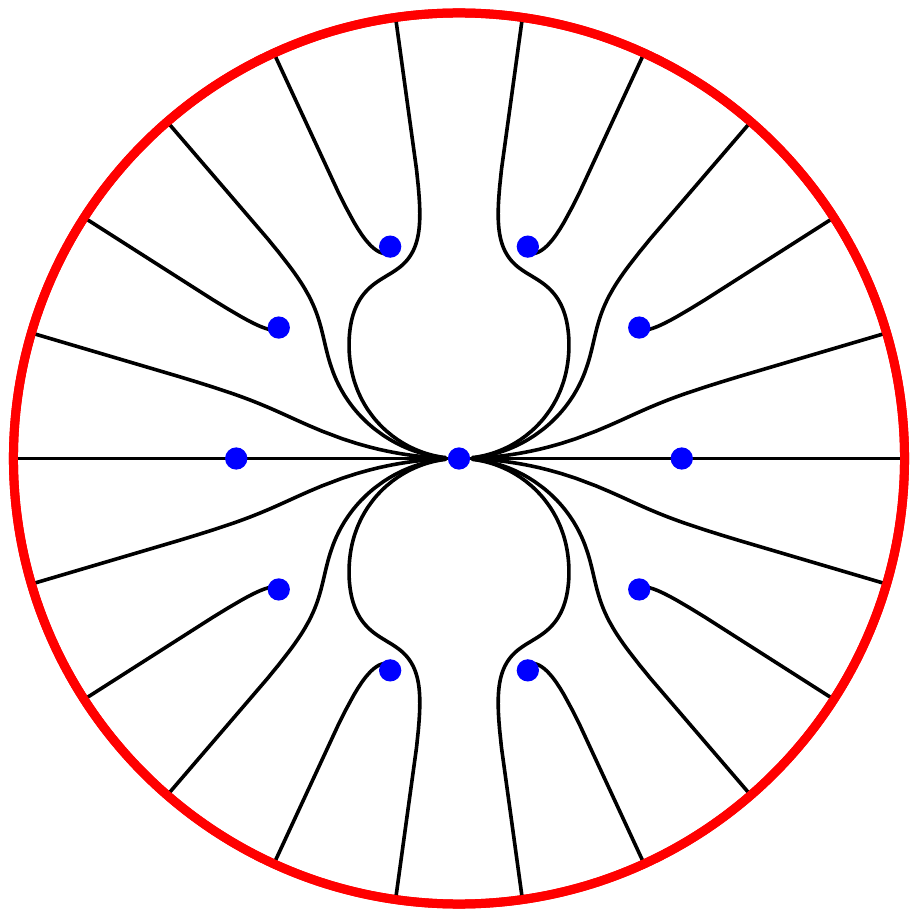}}\quad\subfigure[$k=12$]{\includegraphics[width=3.4cm]{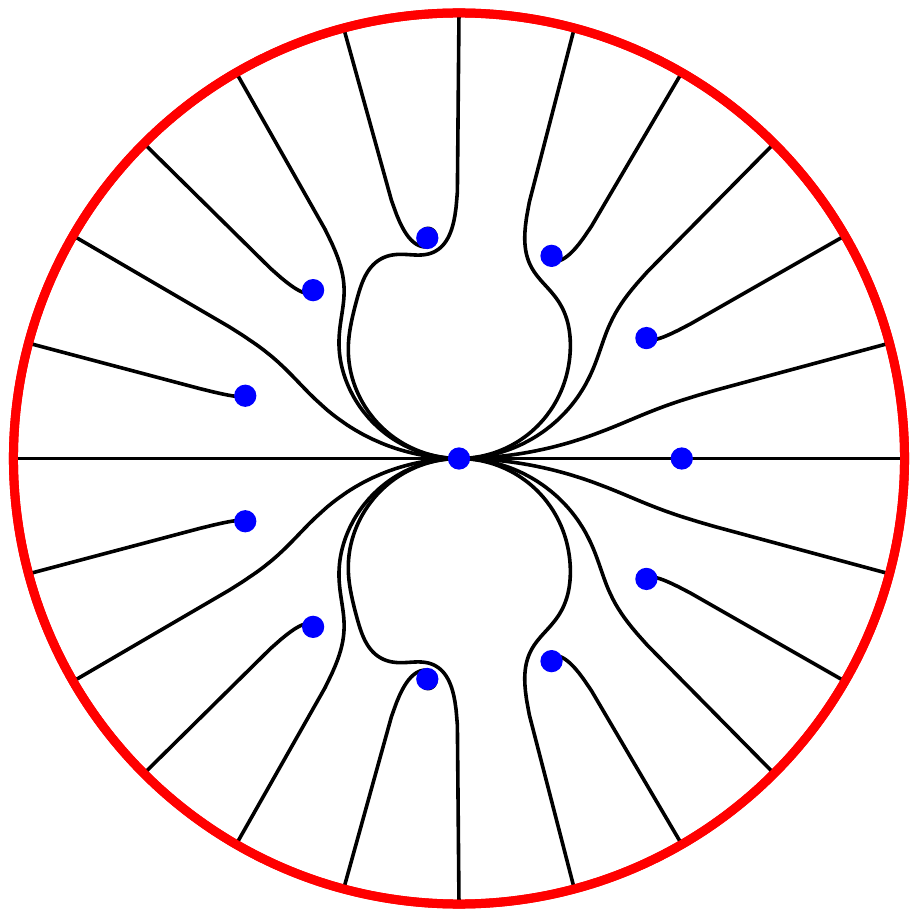}}
\caption{The phase portrait of $\dot z = z^{k+1}-z^2$ with a parabolic point of codimension $1$ at the origin.}\label{parabolic_cod1_origin}
\end{center}\end{figure}

%\begin{figure} 
%\includegraphics[height=4.5cm]{periodgon_s=0}\caption{The periodgon $\dot z = z^{k+1}-z^2+\eps_1z$ for small $\eps_1>0$ (the figure is for $k=8$). With the chosen cuts it has no limit when $\eps_1\to 0$.}\label{periodgon_s=0}\end{figure}

\begin{figure}\begin{center} 
\subfigure[$k=4$]{\includegraphics[width=3.4cm]{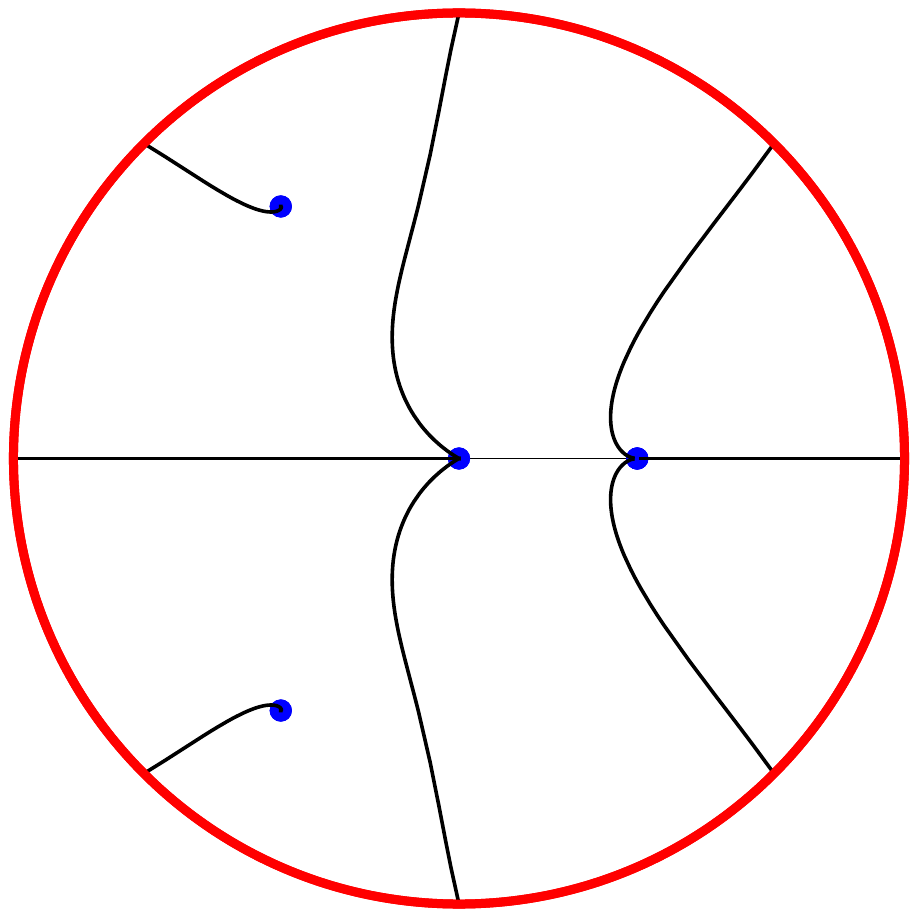}}\quad\subfigure[$k=7$]{\includegraphics[width=3.4cm]{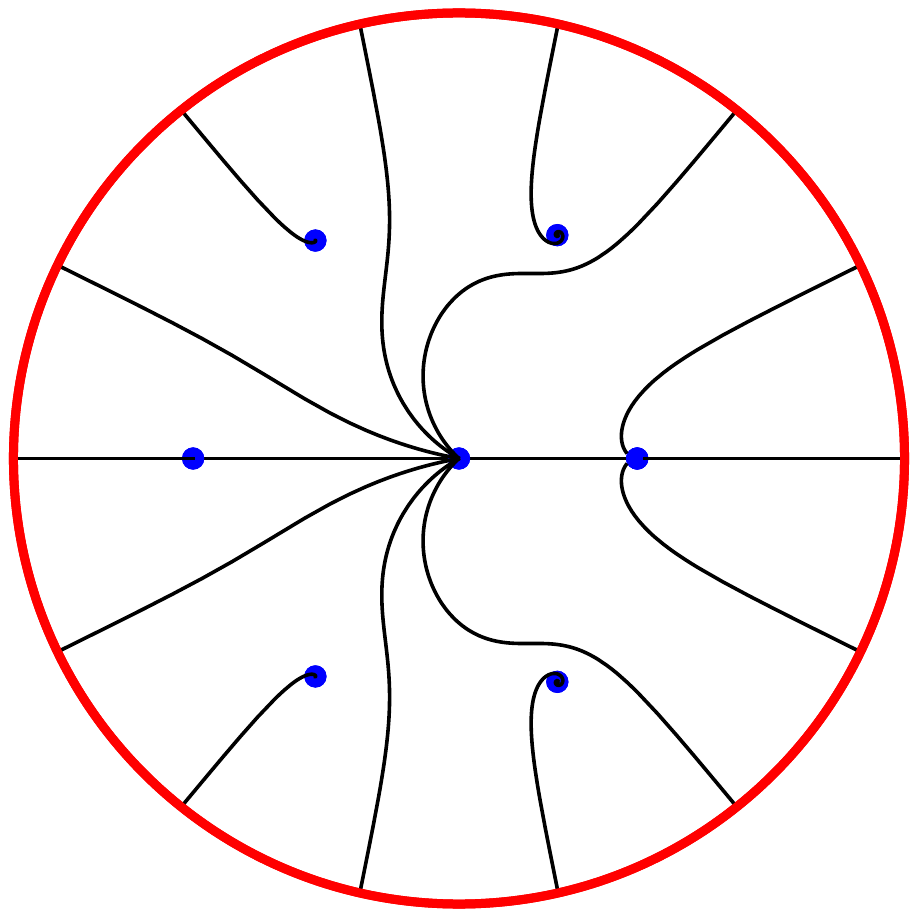}}\quad\subfigure[$k=9$]{\includegraphics[width=3.4cm]{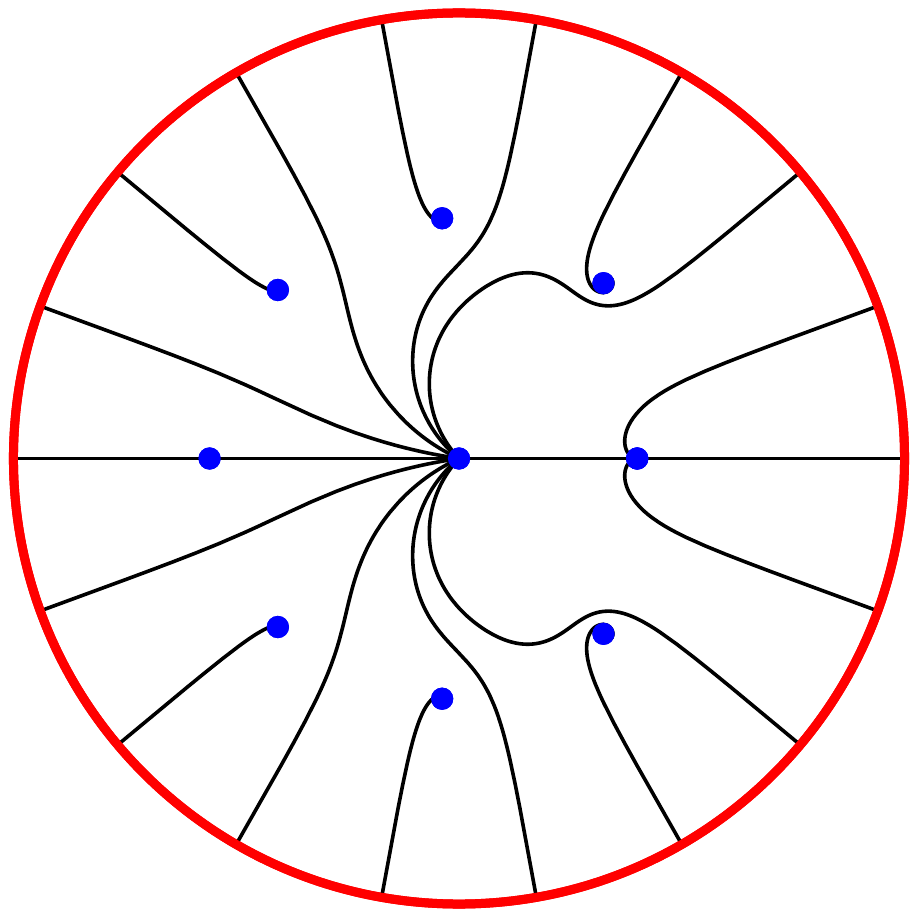}} 
\caption{The phase portrait of $\dot z = z^{k+1}-kz^2+(k-1)z$ with a parabolic point of codimension $1$ at $z=1$.}\label{parabolic_cod1}
\end{center}\end{figure}

%\begin{figure}\begin{center} 
%\subfigure[$k=4$]{\includegraphics[width=3.4cm]{parabolic_double5}}\quad\subfigure[$k=5$]{\includegraphics[width=3.4cm]{parabolic_double6}}\quad\subfigure[$k=6$]{\includegraphics[width=3.4cm]{parabolic_double7}} \subfigure[$k=7$]{\includegraphics[width=3.4cm]{parabolic_double8}} \quad\subfigure[$k=8$]{\includegraphics[width=3.4cm]{parabolic_double9}}\quad\subfigure[$k=9$]{\includegraphics[width=3.4cm]{parabolic_double10}} \subfigure[$k=10$]{\includegraphics[width=3.4cm]{parabolic_double11}}\quad\subfigure[$k=11$]{\includegraphics[width=3.4cm]{parabolic_double12}}\quad\subfigure[$k=12$]{\includegraphics[width=3.4cm]{parabolic_double13}}
%\caption{The phase portrait of $\dot z = z^{k+1}-z^3$ with a parabolic point of codimension $2$ at the origin.}\label{parabolic_cod2}
%\end{center}\end{figure}

\section{The periodgon of \eqref{vector_field}}\label{sec:periodgon}

\subsection{The definition of the periodgon of a polynomial vector field}

Let $
\dot z = \frac{dz}{dt} =P(z)$ be a polynomial vector field of degree $k+1$ with singular points $z_0,\dots, z_k$ and 
$$\nu_j=2\pi i \;{\rm Res}_{z_j} \frac1{P(z)}$$ be the period of $z_j$: this period corresponds to the \lq\lq travel time\rq\rq\ $\int_{\gamma_j} dt$ along a small loop $\gamma_j$ surrounding only $z_j$. If $z_j$ is simple, then $\nu_j= \frac{2\pi i}{P'(z_j)}$. Note that any simple equilibrium point $z_j$ is a center of the rotated vector field 
\begin{equation}\dot z = e^{i\arg \nu_j}P(z). \label{rotated_vf}\end{equation}

\begin{definition} 
Let $
\dot z = \frac{dz}{dt} =P(z)$ be a polynomial vector field and $z_j$ be a simple singular point. The \emph{periodic domain} of $z_j$  is the basin of the center (periodic zone) at $z_j$ of \eqref{rotated_vf}. The boundary of the periodic domain of $z_j$ consists in one or several homoclinic loops through the pole at infinity of \eqref{rotated_vf}, which are called the \emph{homoclinic loop(s)} of $z_j$.  \end{definition}

\begin{lemma} \cite{KR} When all singular points of a vector field $
\dot z = \frac{dz}{dt} =P(z)$ are simple, then their periodic domains are disjoint. If, moreover, they have only one homoclinic loop, then these homoclinic loops are disjoint. If some points have multiple homoclinic loops, then some homoclinic loops agree up to orientation. 
\end{lemma}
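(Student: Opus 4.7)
The lemma has three assertions, which I would prove in sequence using the 1-form $dt = dz/P$ on $\CP^1$, whose periods at the simple poles $z_j$ are the numbers $\nu_j$. For disjointness of periodic domains, suppose for contradiction that a regular $z_0$ lies in $D_i \cap D_j$ for some $i \neq j$. Because $z_0 \in D_j$, the orbit of $\dot z = e^{i\arg \nu_j}P(z)$ through $z_0$ is closed with real period $|\nu_j|$, so the complex-time flow of $\dot z = P$ satisfies $\phi_{\nu_j}(z_0) = z_0$; by analytic continuation $\nu_j$ becomes a full period of $t \mapsto \phi_t(z_0)$, and similarly $\nu_i$. If $\nu_i$ and $\nu_j$ are $\R$-linearly independent, the map descends to a holomorphic $\phi: T \to \CP^1$ on the torus $T = \C / \langle \nu_i, \nu_j \rangle$ satisfying $\phi^*(dz/P) = dt$; since $dt$ on $T$ is nowhere zero and has no poles, $\phi$ must avoid both the zero of $dz/P$ at $\infty$ and its poles at the singular points, contradicting surjectivity of any non-constant holomorphic map from $T$ to $\CP^1$. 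If $\nu_i$ and $\nu_j$ are $\R$-linearly dependent, the two rotated vector fields coincide up to time-reversal, and the common orbit through $z_0$ would simultaneously encircle only $z_j$ (because $z_0 \in D_j$) and only $z_i$ (because $z_0 \in D_i$), which is impossible.

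For disjointness of single loops, assume each $D_i$ has a unique loop $\ell_i$ and suppose $p \in \ell_i \cap \ell_j \cap \C$ for some $i \neq j$. If $\arg \nu_i \not\equiv \arg \nu_j \pmod{\pi}$, the loops cross transversally at $p$, so one of the four local sectors there lies simultaneously in $D_i$ and $D_j$, contradicting the first part. If $\arg \nu_i \equiv \arg \nu_j \pmod{\pi}$, $\ell_i$ and $\ell_j$ are orbits of a common real vector field, hence disjoint or equal as sets; equality with $\partial D_i = \ell_i = \ell_j = \partial D_j$ would exhaust $\CP^1$ as $D_i \cup \ell_i \cup D_j$, leaving no room for the remaining singular points.

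For the multiple-loops statement, assume $D_j$ has $r \geq 2$ loops $\ell_1, \ldots, \ell_r$; meeting only at $\infty$, they divide $\CP^1$ into $r+1$ faces with $D_j$ one of them. I would show that every other face is itself a periodic domain of some $z_i$ with $\nu_i \in \R \cdot \nu_j$, by invoking complete periodicity of the horizontal foliation of the genus-zero translation surface $(\CP^1, dt)$ in direction $\theta = \arg \nu_j$: every region cut out by saddle connections in this direction is a metric cylinder and hence a periodic domain around a center with $\arg \nu \equiv \theta \pmod{\pi}$. Each $\ell_s$ is then the boundary of two adjacent periodic domains and appears with opposite orientations in the two boundaries, giving the claimed coincidence up to orientation.

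The principal obstacle I foresee is the complete-periodicity step in the third part: ruling out exterior faces in which orbits spiral into a non-center equilibrium without producing a cylinder. I would attack it by combining the local structure of $dt$ at each simple pole (closed circles when $\nu_l \in \R \cdot \nu_j$, logarithmic spirals otherwise), the count of $2k$ separatrices of $\infty$ in the given direction, and the genus-zero topology to force every bounded region to be cylindrical; the other steps reduce to standard facts about holomorphic maps and planar vector fields.
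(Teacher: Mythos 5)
The paper itself gives no proof of this lemma; it is imported verbatim from \cite{KR}, so there is no internal argument to compare yours with. Taken on its own terms, your first two parts are essentially the standard Douady--Sentenac-type reasoning and are correct in substance, with two points to tighten. In part one you treat $t\mapsto\phi_t(z_0)$ as a single-valued holomorphic map on all of $\C$; this needs justification, because the complex flow of a polynomial field is incomplete and the time coordinate branches at the pole of the vector field at $\infty$. It is fixable: $t$ maps $D_i\setminus\{z_i\}$ and $D_j\setminus\{z_j\}$ onto half-planes $H_i,H_j$ containing $t(z_0)=0$ with boundaries parallel to $\nu_i$ and $\nu_j$, and the translates of $H_i\cup H_j$ by $\Z\nu_i+\Z\nu_j$ cover $\C$, which lets you extend the flow map by periodicity before invoking surjectivity of a nonconstant map from the torus. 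In the collinear subcase you also use, and should state, that every closed orbit in $D_j$ encircles only $z_j$ (so that its period is exactly $\nu_j$). Part two is fine; note only that the subcase $\ell_i=\ell_j$ genuinely occurs in degree $2$ (two centers sharing one loop), so the conclusion needs at least three singular points, which holds throughout this paper since $k\ge 2$.

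The genuine gap is in part three. The step you flag as the principal obstacle is not merely hard, it is false as stated: the translation surface $(\CP^1,dt)$ with $dt=dz/P$ is not completely periodic. In the direction $\theta=\arg\nu_j$ the only cylinders are the periodic domains of those $z_i$ with $\nu_i\in\R\nu_j$; every simple pole $z_l$ with $\nu_l\notin\R\nu_j$ is a focus of the rotated field $e^{i\theta}P(z)$, around which the direction-$\theta$ leaves spiral, and the faces of $\CP^1\setminus\partial D_j$ adjacent to the loops of $D_j$ can perfectly well contain such foci; those faces then decompose into spiralling basins and strips, not cylinders. So you cannot conclude that every face across a loop $\ell_s$ is a periodic domain, hence not that every $\ell_s$ is shared. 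What you do get cheaply is the residue identity $\sum_{z_l\in F_m}\nu_l=-T_m e^{i\theta}$ for each complementary face $F_m$, where $T_m>0$ is the transit time of $\ell_m$, so every face carries a nonzero total period collinear with $\nu_j$; but converting this into the existence of at least one adjacent face that is itself a periodic domain having $\ell_m$ as one of its loops requires an additional argument that your proposal does not supply. Note also that the statement claims only that \emph{some} loops agree up to orientation, so you should aim to produce one shared loop, not to show that all of them are shared.
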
 
This lemma allows defining the periodgon in the generic case where all singular points have exactly one homoclinic loop. 
The periodgon lies in the time-space $t$, a translation surface, where 
$$t= \int\frac{dz}{P(z)}.$$ The variable $t$ is well defined in the translation surface up to a tanslation as long as the variable $z$ is restricted to a simply connected domain of $\C$ containing no singular points.

\begin{definition} Let $
\dot z = \frac{dz}{dt} =P(z)$ be a polynomial vector field with simple singular points, each having exactly one homoclinic loop. Then the \emph{periodgon}  of the vector field is the image in $t$-space of the complement of the union of the periodic domains bounded by the homoclinic loops (see Figure~\ref{periodgon_def}). 
This definition has a limit in the nongeneric case where some singular point has more than one homoclinic loop (see Figures~\ref{periodgon_def_lim1} and \ref{periodgon_def_lim2}). But the order of the sides in the nongeneric case is not uniquely defined, this coming from the fact that different generic situations have the same limit. \end{definition}

\begin{figure}\begin{center}
\subfigure[Preimage of periodgon]{\includegraphics[width=4.5cm]{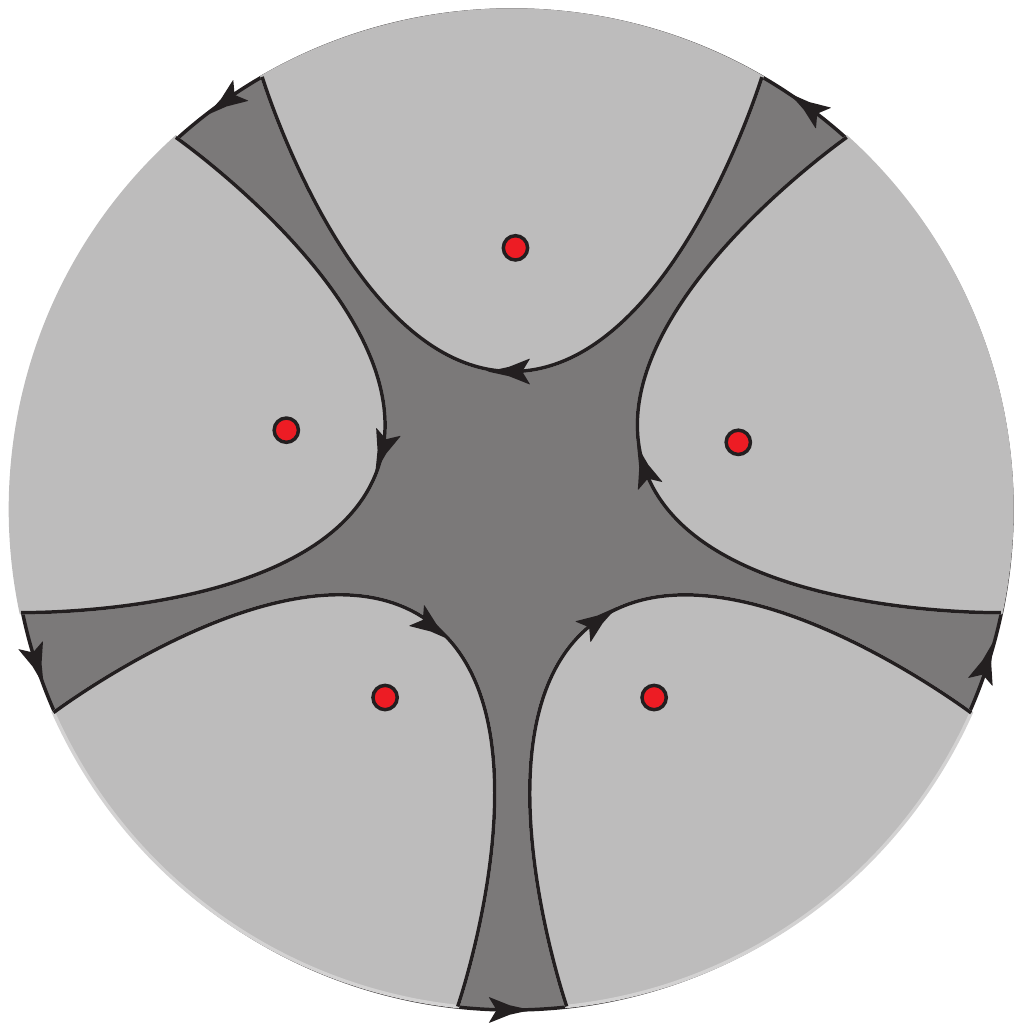}}\qquad\qquad\subfigure[Periodgon]{\includegraphics[width=3.8cm]{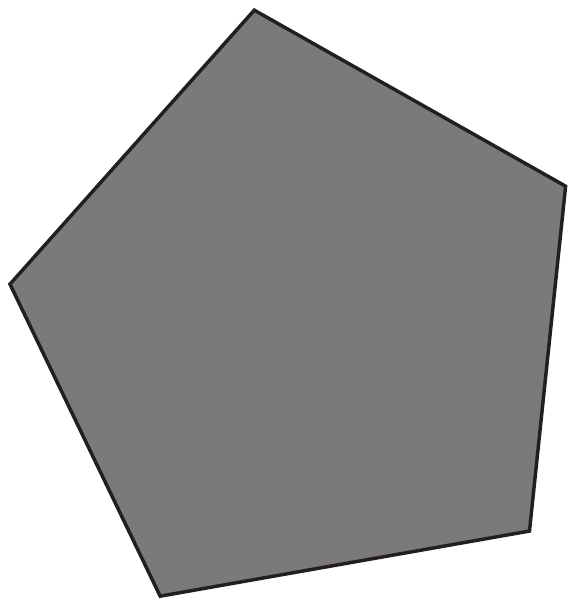}}\caption{The periodgon in $t$-space and its preimage in $z$-space.}\label{periodgon_def}\end{center}\end{figure}

\begin{figure}\begin{center}
\subfigure[Preimage]{\includegraphics[width=4cm]{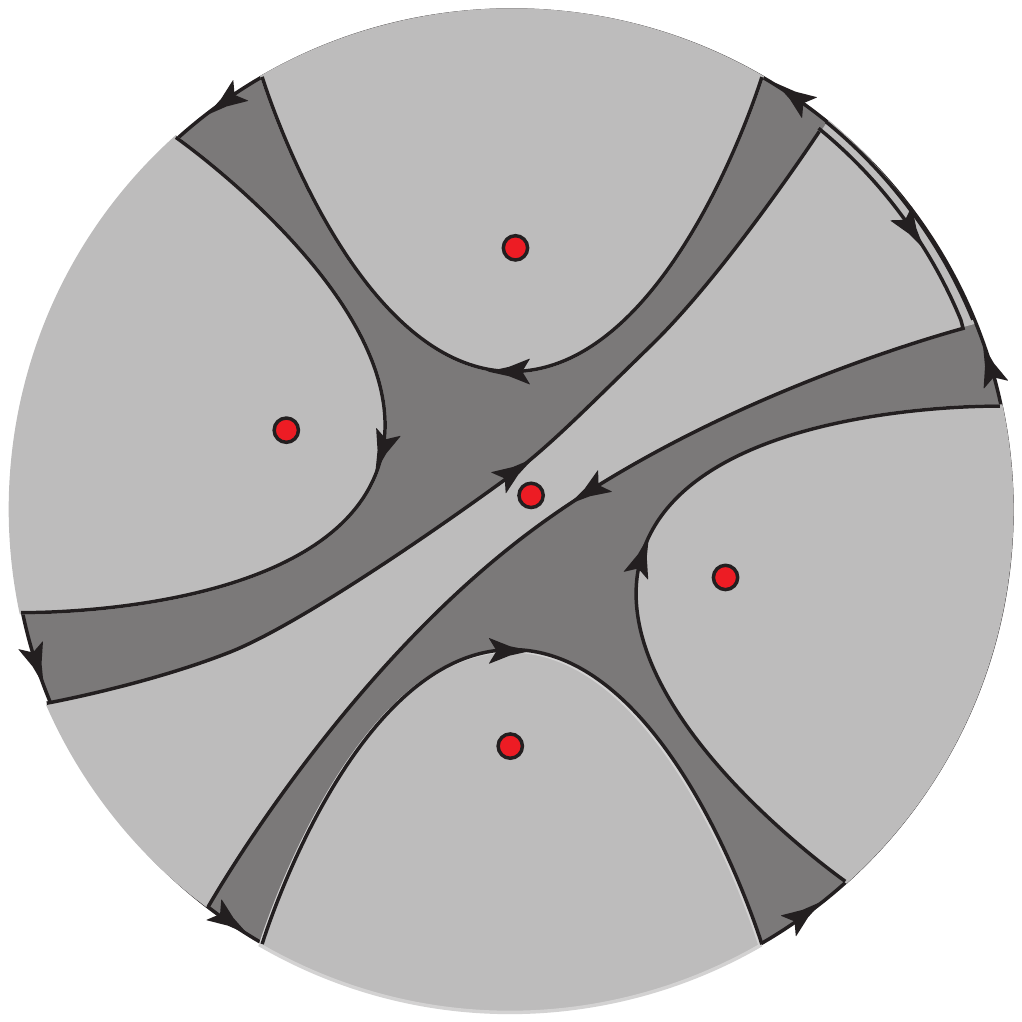}}\qquad\subfigure[Preimage]{\includegraphics[width=4cm]{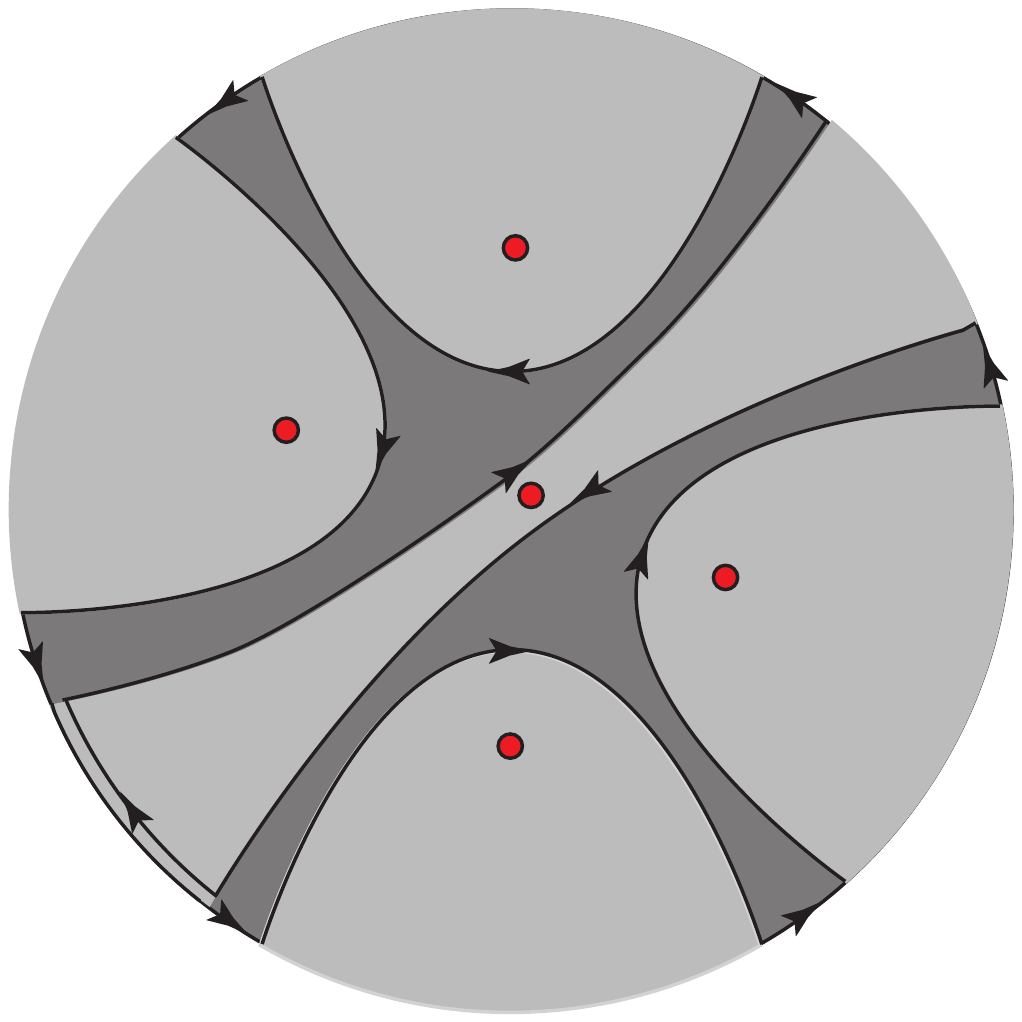}}\\
\subfigure[First periodgon]{\includegraphics[width=4cm]{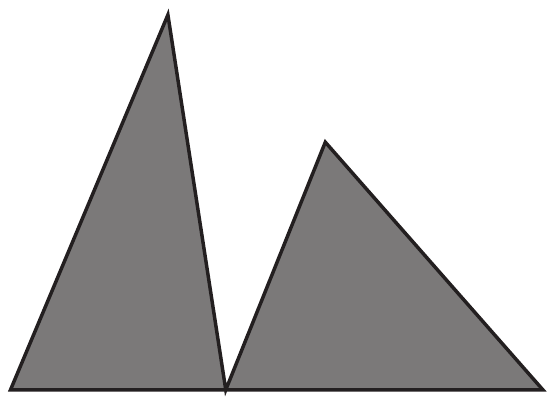}}\qquad\subfigure[Second periodgon]{\includegraphics[width=4cm]{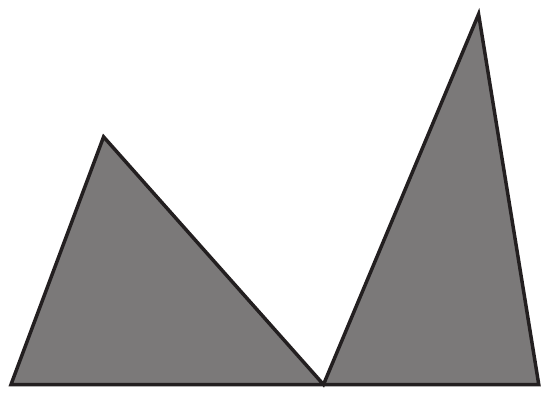}}\\
\subfigure[First unfolding]{\includegraphics[width=4cm]{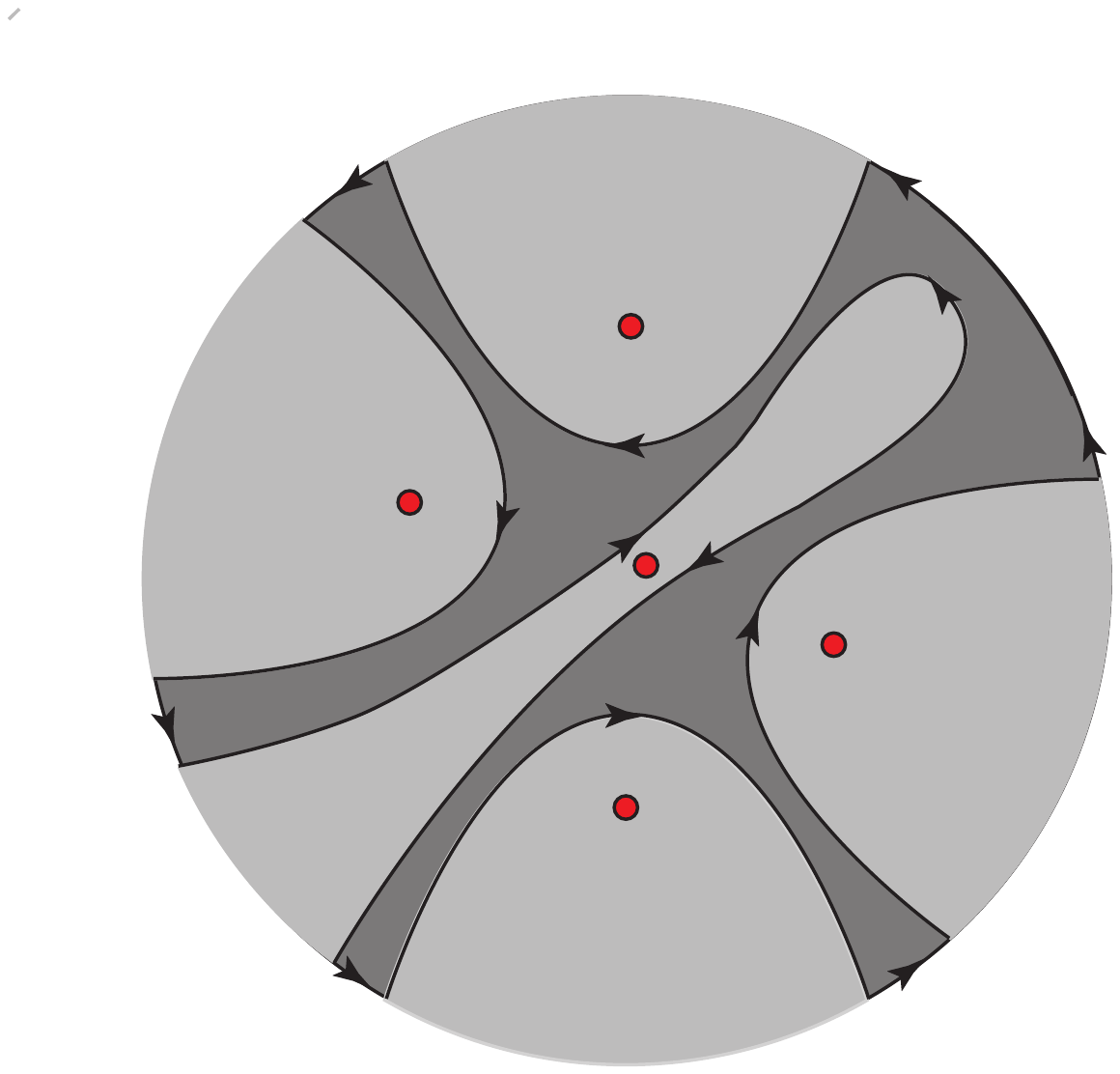}}\qquad \subfigure[Second unfolding]{\includegraphics[width=4cm]{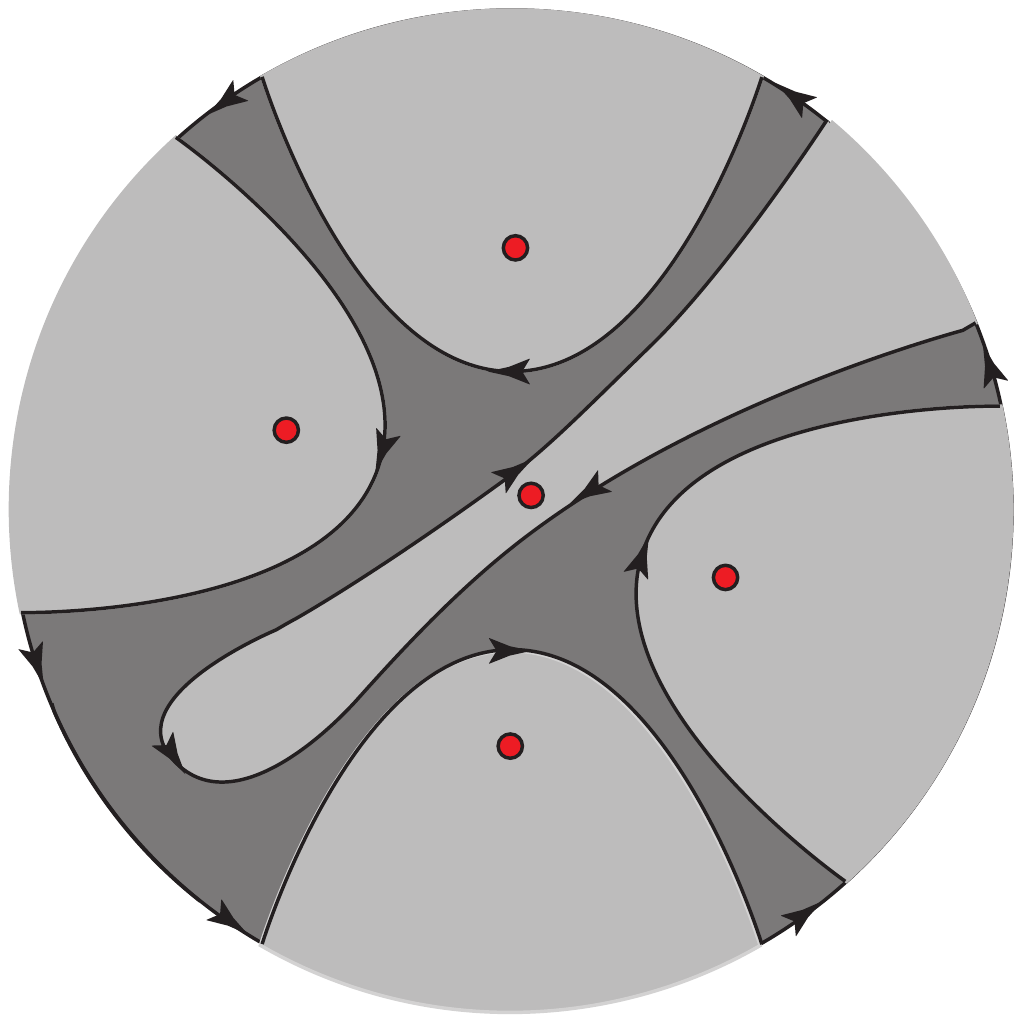}}\\
\subfigure[First unfolded periodgon]{\includegraphics[width=4cm]{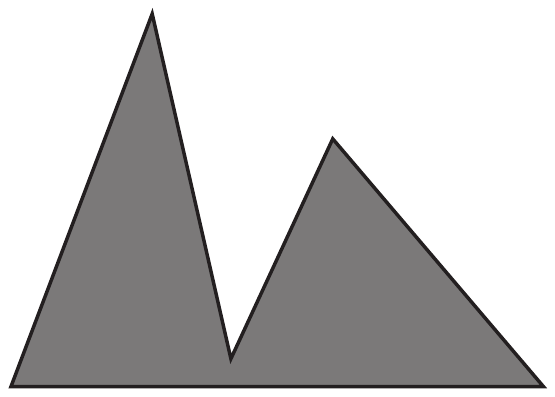}}\qquad\subfigure[Second unfolded periodgon]{\includegraphics[width=4cm]{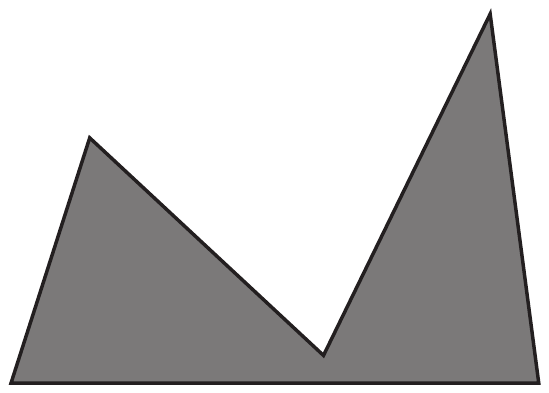}}\\
\caption{The same preimage ((a) and (b)) in $z$-space corresponds to two different nongeneric periodgons in (c) and (d). The respective unfoldings of the preimages appear in (e) and (f) and their corresponding unfolded periodgons in (g) and (h).}\label{periodgon_def_lim1}\end{center}\end{figure}

\begin{figure}\begin{center}
\subfigure[]{\includegraphics[width=3.5cm]{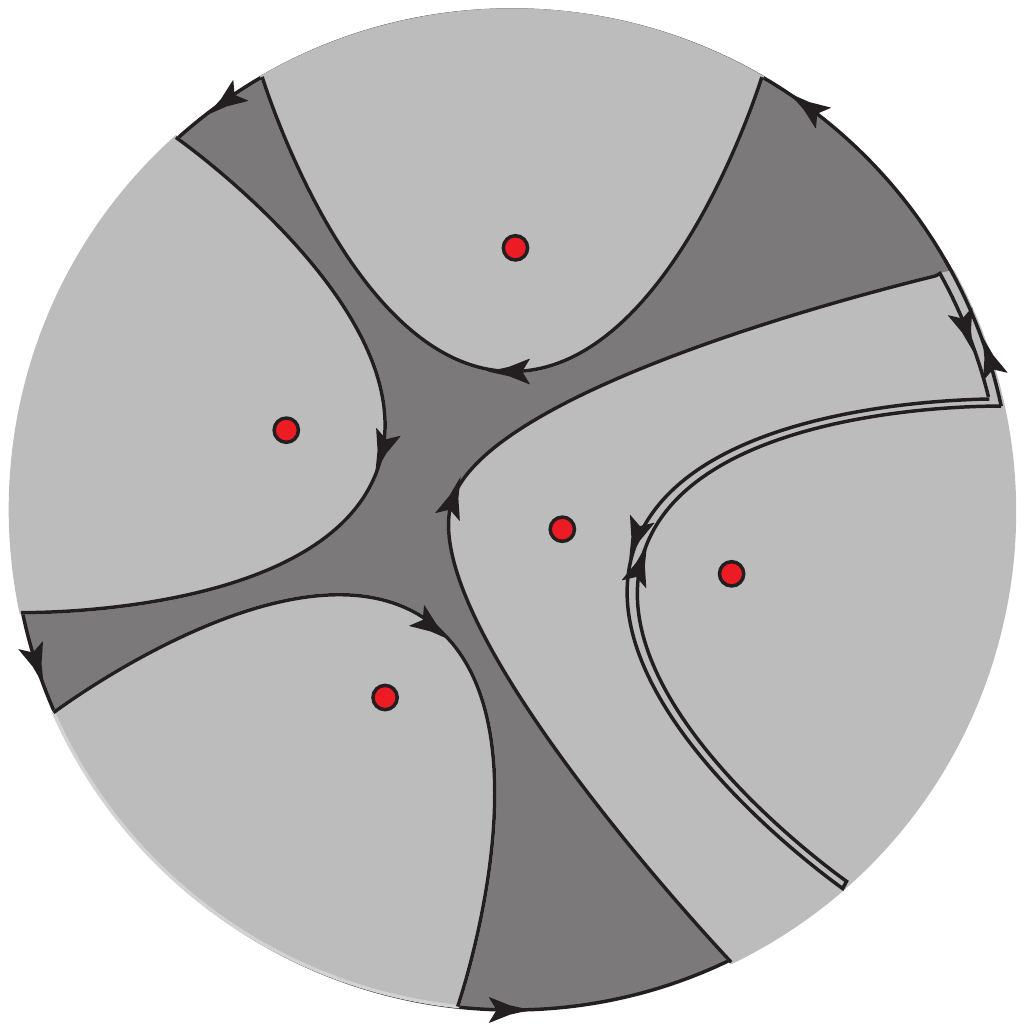}}\qquad\qquad\subfigure[]{\includegraphics[width=3.5cm]{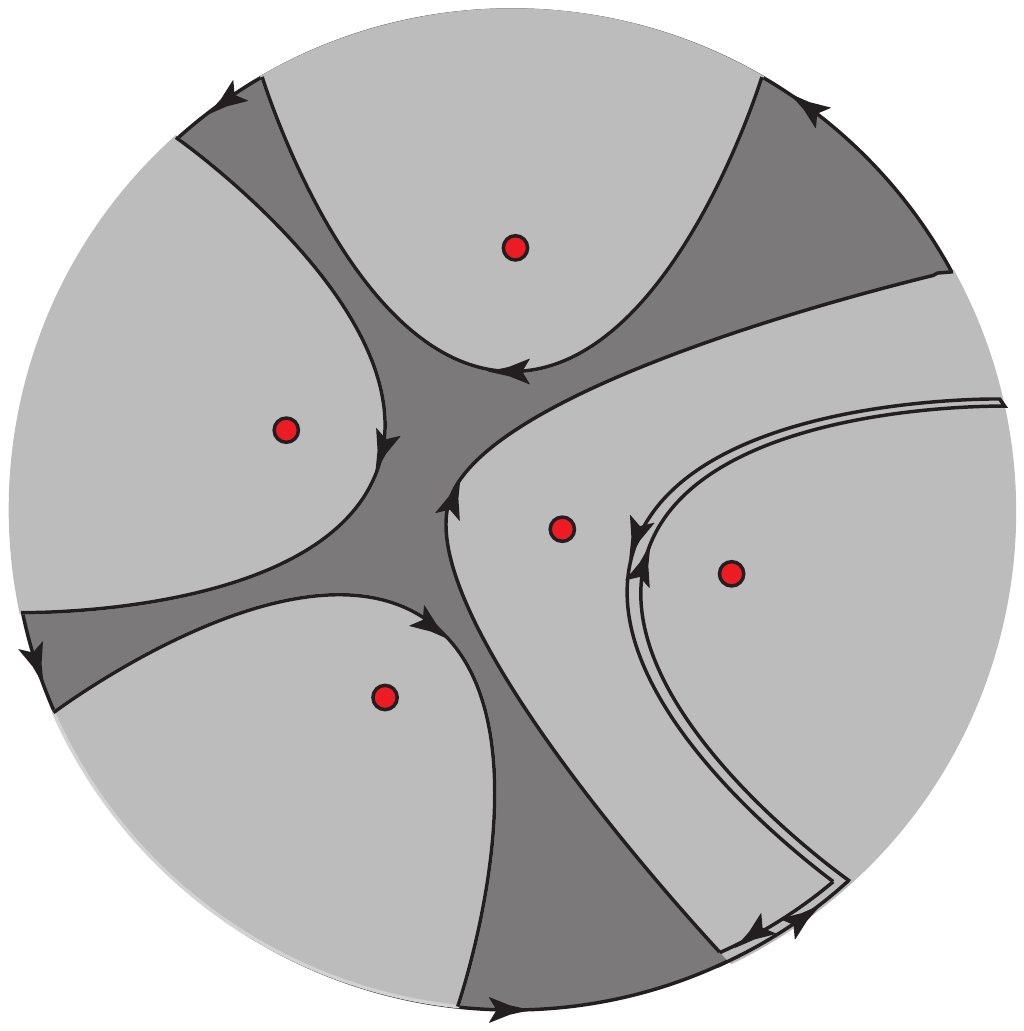}}\\
\subfigure[]{\includegraphics[width=3.5cm]{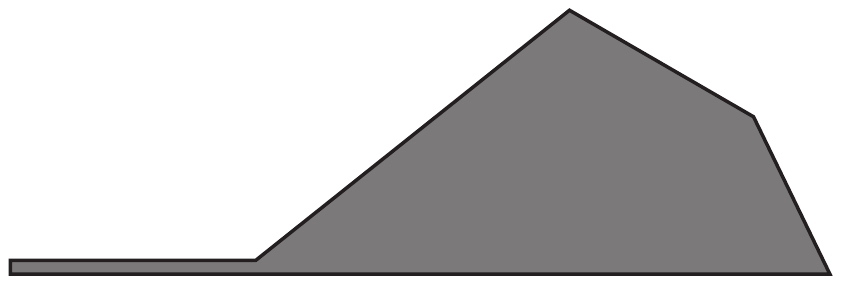}}\qquad\qquad\subfigure[]{\includegraphics[width=3.5cm]{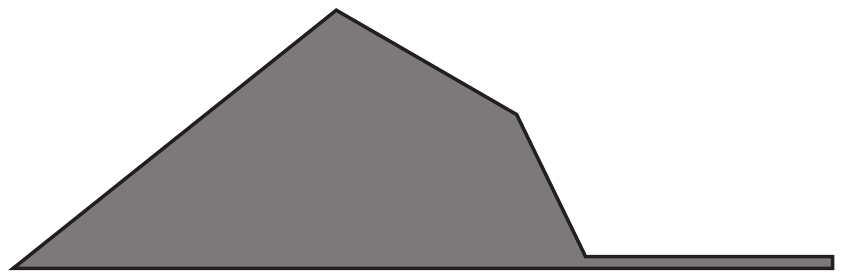}}\caption{In (a) and (b), another preimage in $z$-space corresponding to two different nongeneric periodgons in (c) and (d). The unfoldings are not drawn. }\label{periodgon_def_lim2}\end{center}\end{figure}

\subsection{The periodgon in the degenerate case}\label{sec:degenerate}

It is possible to generalize the notion of periodgon in the degenerate case when some singular point is parabolic (see \cite{KR}). For that, we need to define the parabolic domain of a parabolic point.

\begin{definition}\label{def:sepal} \begin{enumerate} 
\item A \emph{sepal zone} of a parabolic point is a connected component of the complement of the union of the separatrices, which is filled by trajectories having their $\alpha$-limit and $\omega$-limit at the parabolic point.
\item The \emph{parabolic domain} of a parabolic point $z_0$ of a polynomial vector field $\dot z = P(z)$ is the union of all sepal zones of $z_0$ in all rotated vector fields $e^{i\beta} P(z)$ (see Figures~\ref{fig:parab_domain1} and \ref{fig:parab_domain2}). 
\item The boundaries of the periodic and parabolic domain of the different singular points intersect only at infinity and the periodgon is the image in $t$-space of the complement of the union of the periodic and parabolic domains. \end{enumerate}\end{definition}

\begin{figure}\begin{center}
\subfigure[$\alpha=0$]{\includegraphics[width=3.5cm]{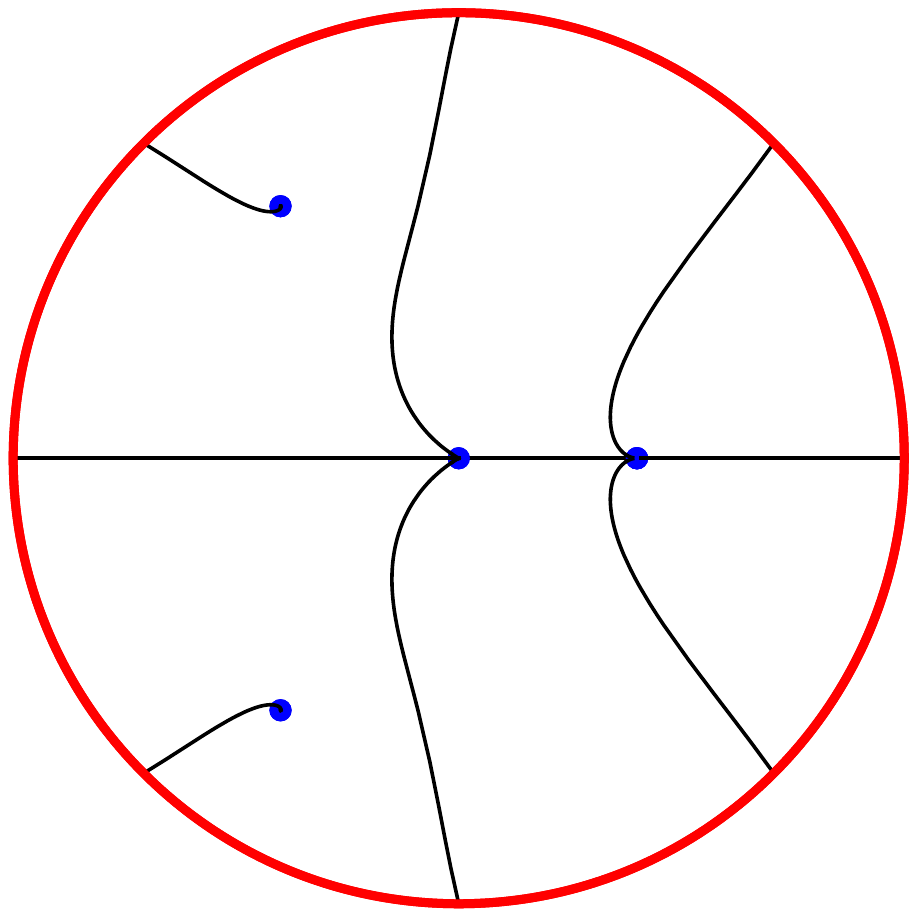}}\qquad\subfigure[$\alpha\in (0,\frac{\pi}2)$]{\includegraphics[width=3.5cm]{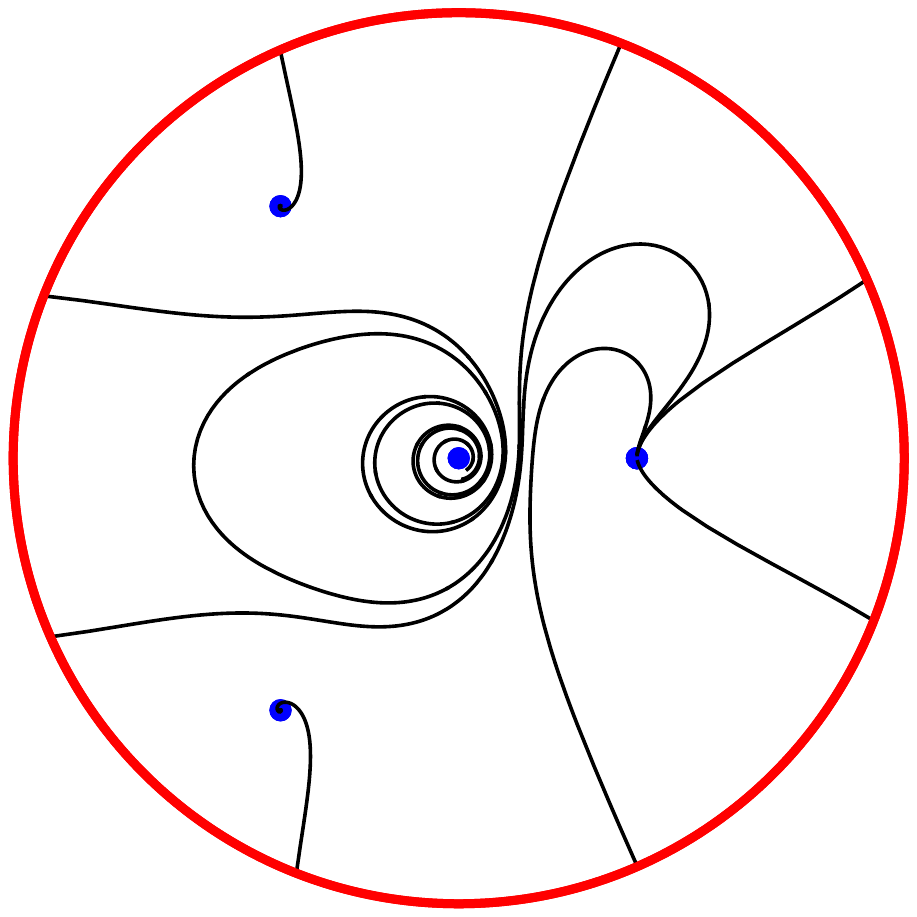}}\qquad\subfigure[$\alpha=\frac{\pi}2$]{\includegraphics[width=3.5cm]{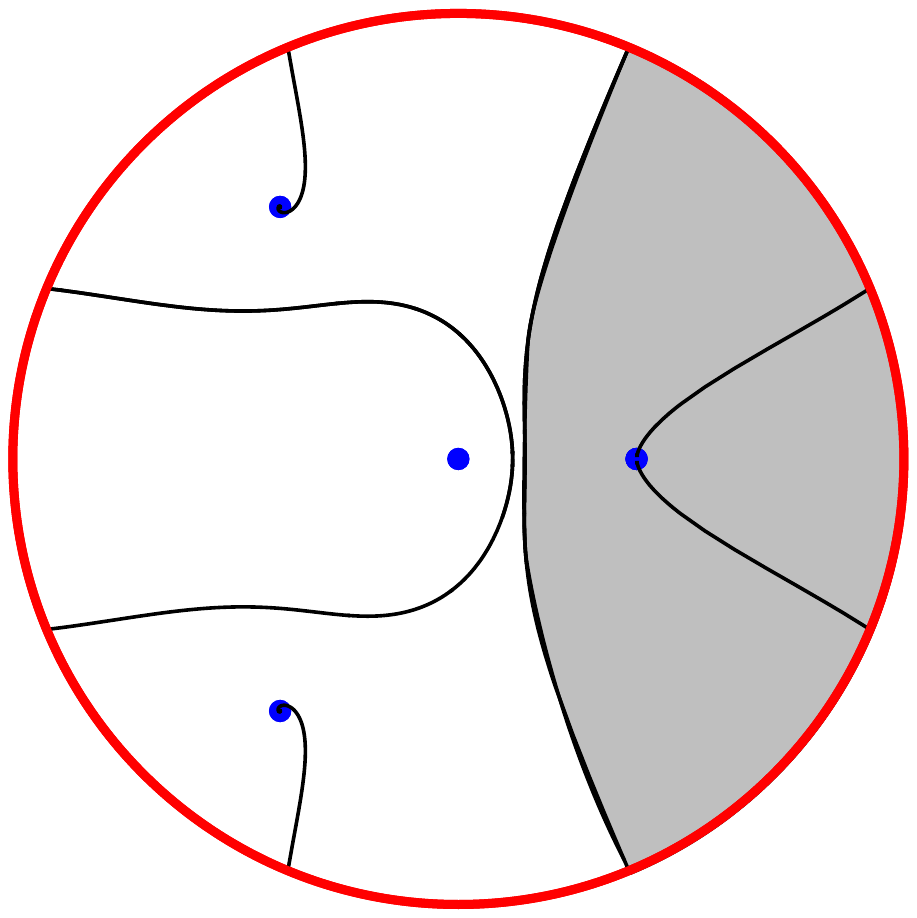}}
\caption{The phase portrait of $\dot z= e^{i\alpha}(z^5-4z^2+3z)$ with a parabolic point at $z=1$ and the parabolic domain in (c). }\label{fig:parab_domain1}\end{center}\end{figure}

\begin{figure}\begin{center}
\subfigure[Parabolic domain of $0$]{\includegraphics[width=4cm]{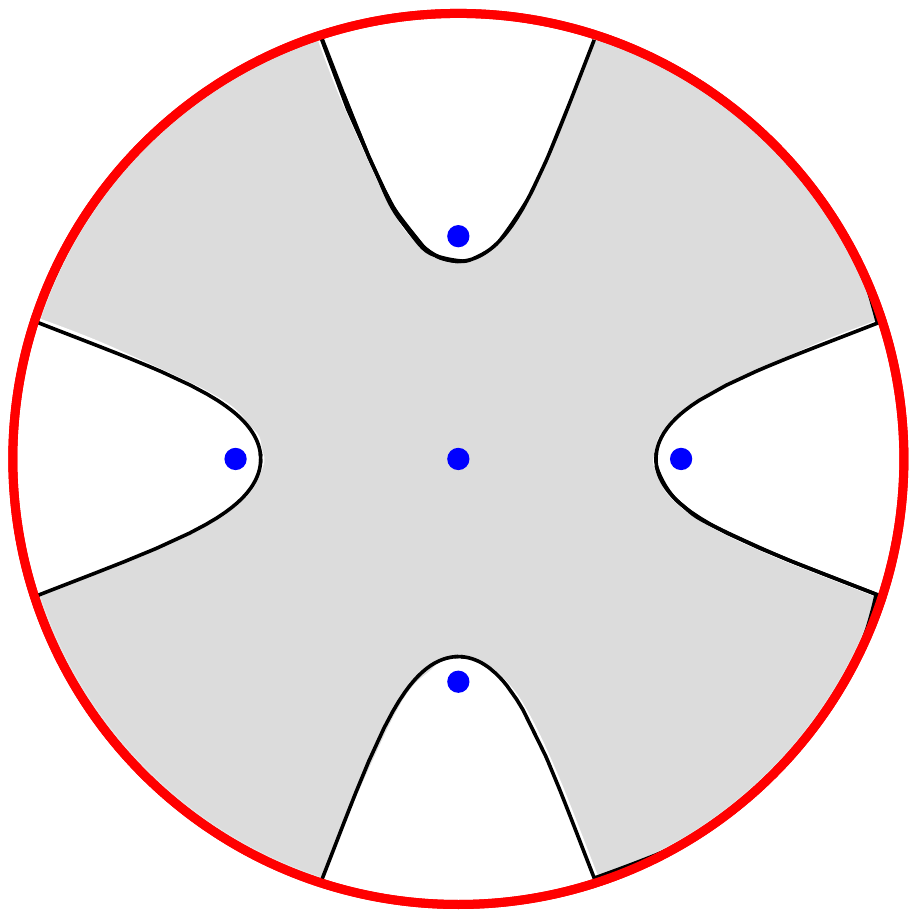}}\qquad\qquad \subfigure[Periodgon of $\dot z= z^6-z^2$]{\includegraphics[width=4cm]{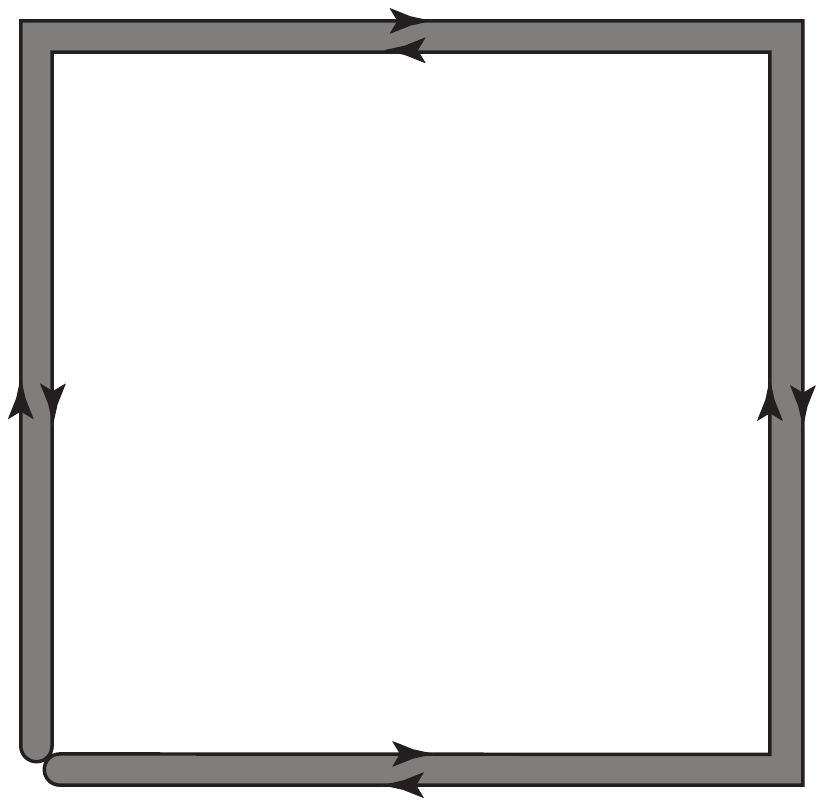}}\caption{The parabolic domain of the origin in  $\dot z= z^6-z^2$. Since the parabolic domain is the complement of the union of the periodic domains of the four singular points, then  the periodgon has empty interior in this case. }\label{fig:parab_domain2}\end{center}\end{figure}

Note that the periodgon has no limit when approaching a parabolic point. Indeed, two sides of the periodgon become infinite. Moreover, their argument makes a nearly full turn (resp. a nearly half-turn full) around the origin when considering  an unfolding of the form  $\dot z = z^2 -\eps z+O(z^3)$ (resp. $\dot z = z^2-\eps +O(z^3)$) (see~Figure~\ref{bif_parabolic_s=0}).

\begin{figure}\begin{center}
\includegraphics[width=6cm]{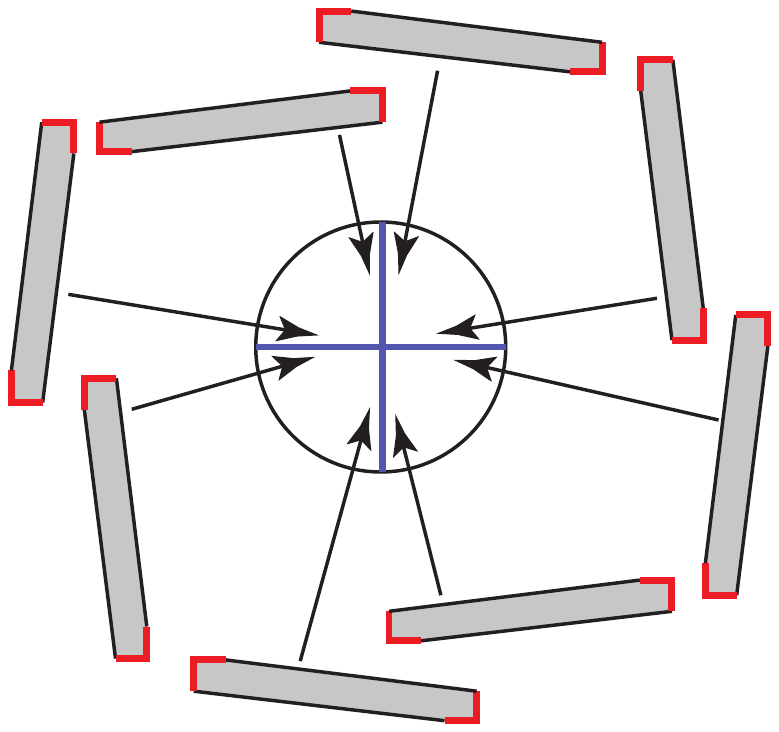}\caption{The bifurcation diagram of the periodgon of $\dot z= z^6-z^2 +se^{i\theta}z$ for $s$ small. The center cycle is the parameter space $se^{i\theta}$ for $s$ small and $\theta\in [0,2\pi]$. The lengths of the black sides corresponding to the periods of $z_0$ and $z_1$ tend to infinity when $s\to0$. The bifurcations occur for $\theta= \frac{m\pi}2$.}\label{bif_parabolic_s=0}\end{center}\end{figure}

\subsection{The rotational property with respect to $\alpha$}\label{sec:rot} The change of coordinate $z\mapsto Z=e^{i\alpha}z$ brings  \eqref{3_par} to the form 
$$\dot Z = e^{-ik\alpha} Z\left(Z^k-k(1-s)^{k-1}Z+(k-1)s^ke^{i\theta}\right).$$ The periodgon of \eqref{3_par} is that of $\dot Z = Z\left(Z^k-k(1-s)^{k-1}Z+(k-1)s^ke^{i\theta}\right)$ rotated by $e^{-ik\alpha}$. Hence, it suffices to study the shape of the periodgon for $\alpha=0$. 

\subsection{The eigenvalues of \eqref{vector_field}}
Because of all the symmetries described in Section~\ref{sec:symmetries} we limit ourselves to $\theta\in\left(0,\frac{\pi}{k-1}\right)$ and, as discussed in Section~\ref{sec:rot}, to $\alpha=0$. We are interested in understanding the shape of the periodgon.  Let us call $z_0=0$, and let $z_1,\dots, z_k$ be the other singular points. They are numbered by increasing argument starting with $z_1$, where $\arg(z_1)\in \left(\theta,\frac{\pi-\theta}{k}\right)$.  Let $\lambda_j$ be the eigenvalue of $z_j$. Then,
\begin{equation}\begin{cases} \lambda_0= (k-1)s^ke^{i\theta},\\
\lambda_j= k(k-1)\left((1-s)^{k-1}z_j-s^ke^{i\theta}\right), & j=1, \dots, k.\end{cases}\label{eigenvalues}\end{equation}

\begin{lemma}\label{lemma_crossing} For $s\neq0,1$, the eigenvalue $\lambda_0$ can only be collinear with one of the $\lambda_j$ if  $\theta= \frac{\pi \ell}{k-1}$ for some integer $\ell$. \end{lemma}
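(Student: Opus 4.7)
The plan is to translate the collinearity condition $\lambda_j\parallel\lambda_0$ into an algebraic statement about the location of the root $z_j$, then plug $z_j$ back into its defining polynomial equation and read off the condition on $\theta$.

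First, from \eqref{eigenvalues} I would compute
\begin{equation*}
\frac{\lambda_j}{\lambda_0} \;=\; \frac{k(k-1)\bigl((1-s)^{k-1}z_j - s^ke^{i\theta}\bigr)}{(k-1)s^ke^{i\theta}} \;=\; \frac{k(1-s)^{k-1}}{s^k}\,e^{-i\theta}z_j \;-\; k.
\end{equation*}
For $s\in(0,1)$ the coefficient $k(1-s)^{k-1}/s^k$ is a positive real and the constant $-k$ is real, so $\lambda_j/\lambda_0\in\R$ if and only if $e^{-i\theta}z_j\in\R$, i.e.\ $z_j\in e^{i\theta}\R$.

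Next, by the rotational reduction of Section~\ref{sec:rot} I may take $\alpha=0$, so that $z_j$ is a root of
\begin{equation*}
z^k - k(1-s)^{k-1}z + (k-1)s^ke^{i\theta}=0.
\end{equation*}
Writing $z_j=re^{i\theta}$ with $r\in\R$ and substituting yields
\begin{equation*}
r^ke^{ik\theta} - k(1-s)^{k-1}re^{i\theta} + (k-1)s^ke^{i\theta}=0,
\end{equation*}
and dividing by $e^{i\theta}$ gives
\begin{equation*}
r^ke^{i(k-1)\theta} \;=\; k(1-s)^{k-1}r - (k-1)s^k \;\in\;\R.
\end{equation*}

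To finish, I would split on $r$. If $r=0$, then the equation forces $(k-1)s^ke^{i\theta}=0$, contradicting $s\neq 0$; hence $z_j$ cannot coincide with $z_0$. If $r\neq 0$, then $r^k\in\R^*$, so $e^{i(k-1)\theta}\in\R$, which means $(k-1)\theta\in\pi\Z$, i.e.\ $\theta=\frac{\pi\ell}{k-1}$ for some integer $\ell$, as claimed. There is no real obstacle in this argument beyond keeping careful track of the cases $r=0$ and $r\neq 0$; the only place where the hypothesis $s\neq 0,1$ is used is in ensuring that the coefficient $k(1-s)^{k-1}/s^k$ is a nonzero real, so the equivalence in the first step is genuine.
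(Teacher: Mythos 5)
Your proof is correct and follows essentially the same route as the paper's: collinearity of $\lambda_j$ with $\lambda_0$ forces $z_j\in e^{i\theta}\R$, and substituting $z_j=re^{i\theta}$ into the defining polynomial forces $e^{i(k-1)\theta}\in\R$, hence $\theta=\frac{\pi\ell}{k-1}$. The paper states this more tersely in terms of arguments ($\arg z_j=\theta+m_1\pi$ implies $\arg z_j^k=\theta+m_2\pi$), while you spell out the ratio $\lambda_j/\lambda_0$ and the case $r=0$ explicitly, but the content is identical.
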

\begin{proof} If $\lambda_j$ is collinear with $\lambda_0$ and $s\neq0,1$, then \eqref{eigenvalues} implies that $\arg z_j= \theta +m_1\pi$ for some integer $m_1$. This in turn implies that $\arg z_j^k= \theta+ m_2\pi$, for some integer $m_2$, hence the result. \end{proof}

\begin{lemma}\label{sign_re_eig} For all $s>0$ and $\theta\in[0,\frac{\pi}{k-1}]$,  then ${\rm Re} (\lambda_0)>0$, and ${\rm Re}(\lambda_j)<0$ if ${\rm Re}(z_j)<0$ and $j>0$. For $s$ close to $1$, then ${\rm Re} (\lambda_j)<0$ for all $j>0$. Moreover, if ${\rm Re}(z_j)>0 $ for all $s$, then ${\rm Re}(\lambda_j)$ changes sign when $s$ decreases from $1$ to $0$. \end{lemma}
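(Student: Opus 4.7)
The plan is to read off each of the four assertions directly from the explicit eigenvalue formula \eqref{eigenvalues}, with a continuity argument where sign changes are involved. Everything reduces to the signs of $\cos\theta$, the positive quantities $(1-s)^{k-1}$, $s^k$, and the real part of the singular point $z_j$.

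For the first two assertions I would work with real parts directly. From \eqref{eigenvalues}, ${\rm Re}(\lambda_0) = (k-1)s^k\cos\theta$, and since $\theta\in[0,\tfrac{\pi}{k-1}]$ lies in the first quadrant (for $k\geq 3$, which is the generic case of interest here), $\cos\theta>0$ strictly in the open range, while $s>0$, giving ${\rm Re}(\lambda_0)>0$. For the second assertion,
\[
{\rm Re}(\lambda_j) \;=\; k(k-1)\bigl[(1-s)^{k-1}\,{\rm Re}(z_j) \;-\; s^k\cos\theta\bigr];
\]
when ${\rm Re}(z_j)<0$ and $s\in(0,1)$, both summands in the bracket are nonpositive with the first strictly negative, hence ${\rm Re}(\lambda_j)<0$.

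For the third and fourth assertions I would evaluate \eqref{eigenvalues} at the two endpoints $s=1$ and $s=0$ and appeal to continuity of $s\mapsto z_j(s)$. At $s=1$ the factor $(1-s)^{k-1}$ vanishes, leaving $\lambda_j|_{s=1}=-k(k-1)e^{i\theta}$ for every $j\geq 1$, so ${\rm Re}(\lambda_j)|_{s=1}=-k(k-1)\cos\theta<0$; continuity in $s$ then propagates this negative sign to a neighborhood of $s=1$. At $s=0$ the factor $s^k$ vanishes, leaving $\lambda_j|_{s=0}=k(k-1)\,z_j$, whose real part is positive by the standing hypothesis ${\rm Re}(z_j)>0$ for all $s$. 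Combining with the sign near $s=1$, the intermediate value theorem applied to the continuous real-valued map $s\mapsto {\rm Re}(\lambda_j(s))$ on $[0,1]$ forces the required sign change on $(0,1)$. The only delicate point is to ensure single-valued continuity of $z_j(s)$ along the whole interval; this follows from the hypothesis ${\rm Re}(z_j)>0$ (which keeps $z_j$ away from the parabolic collision at the origin occurring at $s=0$), combined with Lemma~\ref{lemma_crossing} and the fact that $\Delta=0$ is confined to $s=\tfrac12$ at $\theta=\tfrac{2\pi\ell}{k-1}$, so no other root collision occurs in the $\theta$-range under consideration. The main obstacle is essentially nothing beyond careful sign-bookkeeping, since the algebra of \eqref{eigenvalues} does almost all of the work.
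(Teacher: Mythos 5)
Your proof is correct and is essentially the paper's own argument: the paper's proof of this lemma is the single line ``This follows from \eqref{eigenvalues}'', and your sign-bookkeeping of ${\rm Re}(\lambda_0)=(k-1)s^k\cos\theta$ and ${\rm Re}(\lambda_j)=k(k-1)\bigl[(1-s)^{k-1}{\rm Re}(z_j)-s^k\cos\theta\bigr]$, together with the intermediate value theorem for the last assertion, is exactly the intended elaboration. The one caveat you rightly flag --- that $\cos\theta>0$ can fail at the endpoint of $[0,\tfrac{\pi}{k-1}]$ for small $k$ (e.g.\ $k=3$, $\theta=\tfrac{\pi}{2}$) --- is an imprecision in the lemma's statement rather than a gap in your argument.
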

\begin{proof} This follows from \eqref{eigenvalues}. \end{proof}

\subsection{Preliminaries on the singular points}

Since for $\alpha=0$ the singular points apart from $z_0=0$ are the same as those of $\dot z = z^k-k(1-s)^{k-1}z +(k-1)s^ke^{i\theta}$ we can apply the results of \cite{KR}.

\begin{proposition}\label{KR} \cite{KR} We consider \eqref{3_par} with  $\theta \in [0,\frac{\pi}{k-1}]$ and $s\in[0,1]$. Let $z_0=0$, and $z_1, \dots, z_k$ be the other singular points. 

\begin{enumerate}
\item The singular points  $z_1(s,\theta,0),\ldots, z_{k}(s,\theta,0)$ have distinct arguments for all $s\in (0,1]$, unless $\theta=0$, in which case the two roots $z_1(s,0,0)$ and $z_k(s,0,0)$ both have zero argument for $s\leq\frac{1}{2}$. Then it makes sense ordering them by increasing value of argument. (When $s=0$, then $z_2, \dots z_k$ have distinct arguments.)
\item If $z_j(s,\theta,0)\notin e^{i\theta}\R$, then the absolute value of $\arg(e^{-i\theta}z_j(s,\theta,0))\in(-\pi,\pi)$ increases monotonically with $s$.
\item This implies that 
$$z_1(0,\theta,0)=0,\quad\text{and}\quad z_j(0,\theta,0)=k^{\frac{1}{k-1}}e^{\frac{2\pi i (j-1)}{k-1}},\ j=2,\ldots,k,$$
and the roots are caught for all $s\in[0,1]$ in the following disjoint sectors:
\begin{align*}
 \arg z_1(s,\theta,0)&\in[\theta,\tfrac{\theta+\pi}{k}],\\
 \arg z_j(s,\theta,0)&\in\left[\tfrac{2\pi(j-1)}{k-1},\tfrac{\theta+(2j-1)\pi}{k},\right],\quad\text{for } 2\leq j\leq\tfrac{k+1}{2},\\
 \arg z_j(s,\theta,0)&\in\left[\tfrac{\theta+(2j-1)\pi}{k},\tfrac{2\pi(j-1)}{k-1}\right],\quad\text{for } \tfrac{k}{2}+1\leq j\leq k.
\end{align*}
\item For $\theta=0$, the two roots $z_1(s,0,0)$ and $z_k(s,0,0)$ are real for $s\in [0,\frac{1}{2}]$ and merge for $s=\frac{1}{2}$. For $s>\frac{1}{2}$, they  split apart in the imaginary direction. 
\end{enumerate} 
\end{proposition}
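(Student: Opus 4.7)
The plan is to reduce everything to monotonicity in a single real parameter along a reparametrized family, prove that monotonicity by implicit differentiation, and then read off parts (1), (3), (4) from the endpoints $s=0,1$ together with the real symmetry at $\theta=0$. Setting $u=z/(1-s)$ and $t=\bigl(s/(1-s)\bigr)^k$ (strictly increasing on $[0,1)$) rewrites the equation for the nonzero singular points as
\[
u^k-ku=-(k-1)t\,e^{i\theta},\qquad t\in[0,\infty),
\]
a family in which only the real parameter $t$ varies (with $\theta$ fixed). Since $\arg u=\arg z$, it suffices to track $\arg(e^{-i\theta}u_j(t))$.

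For part (2), implicit differentiation gives $du/dt=-(k-1)e^{i\theta}/\bigl(k(u^{k-1}-1)\bigr)$, and combining with the identity $u^k-u=(k-1)(u-te^{i\theta})$ (an immediate rearrangement of the defining equation) yields
\[
\frac{1}{u}\frac{du}{dt}=-\frac{e^{i\theta}}{k(u-te^{i\theta})}.
\]
Taking imaginary parts, $\frac{d}{dt}\arg u=\frac{\sin(\arg u-\theta)}{k\,|u-te^{i\theta}|}$, so $\frac{d}{dt}\arg(e^{-i\theta}u)$ has the same sign as $\sin(\arg(e^{-i\theta}u))$; hence $|\arg(e^{-i\theta}u_j)|$ is nondecreasing in $t$ (equivalently in $s$).

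For (1) and (3), I would evaluate the endpoints and invoke this trapping. At $t=0$ the equation factors as $u(u^{k-1}-k)=0$, giving $u_1=0$ and $u_j=k^{1/(k-1)}e^{2\pi i(j-1)/(k-1)}$ for $j\geq 2$. As $t\to\infty$ the dominant balance $u^k\sim -(k-1)t\,e^{i\theta}$ produces equispaced roots with asymptotic argument $(\theta+(2j-1)\pi)/k$, matching the $s=1$ values. A local expansion $u_1=\tfrac{k-1}{k}t\,e^{i\theta}\bigl(1+O(t^{k-1})\bigr)$ shows $\arg(e^{-i\theta}u_1)>0$ as soon as $t>0$ when $\theta\in(0,\pi/(k-1))$, and the monotonicity then confines it to $[0,(\pi-(k-1)\theta)/k]$, i.e.\ $\arg z_1\in[\theta,(\theta+\pi)/k]$. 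For $j\geq 2$ the initial argument $2\pi(j-1)/(k-1)-\theta$ lies in $(0,\pi)$ for $j\leq(k+1)/2$ and in $(-\pi,0)$ otherwise, so monotonicity drives each root to its stated $s=1$ endpoint within the announced sector; distinctness in (1) follows, the only possible coincidence being when a root lies on $e^{i\theta}\R^+$, which by the range of $\theta$ happens exclusively at $\theta=0$.

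For (4), reality of the coefficients at $\theta=0$ makes roots come in complex-conjugate pairs and leaves $\R$ invariant; a direct substitution shows that at $s=1/2$ both the polynomial and its $z$-derivative vanish at $z=1/2$, so this is a double real root, and generic unfolding forces $z_1,z_k\in\R$ for $s\in[0,1/2]$, a collision at $s=1/2$, and splitting perpendicular to $\R$ for $s>1/2$. I expect the main obstacle to be the careful handling of the degenerate initial point $u_1=0$ at $t=0$, where $\arg u_1$ is undefined and the monotonicity formula is singular, and verifying that no $\arg(e^{-i\theta}u_j)$ crosses $\pm\pi$ along the flow, so that the principal-value monotonicity is preserved globally. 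Both issues are settled by the explicit leading-order expansion near $t=0$ and by the sector trapping inherited from the endpoints.
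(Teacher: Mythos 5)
The paper states this proposition without proof, importing it verbatim from \cite{KR}, so there is no in-paper argument to compare against; judged on its own, your proof is essentially correct and self-contained. The reduction to $u^k-ku=-(k-1)t\,e^{i\theta}$ with $t=(s/(1-s))^k$ is valid, and the key identity $\frac{1}{u}\frac{du}{dt}=-\frac{e^{i\theta}}{k(u-te^{i\theta})}$ is right; it moreover shows that the whole line $e^{i\theta}\R$ is invariant under the root flow (the logarithmic derivative is real there), which is exactly what settles the worry you raise at the end: a root starting off that line can never reach $\arg(e^{-i\theta}u)\in\{0,\pm\pi\}$, so the principal value stays in one of the open half-intervals and the monotonicity is global. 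One small slip: taking imaginary parts gives $\frac{d}{dt}\arg u=\frac{\sin\big(\arg(u-te^{i\theta})-\theta\big)}{k\,|u-te^{i\theta}|}$, not $\sin(\arg u-\theta)$ in the numerator; the two quantities differ in value but have the same sign (both equal the sign of $\mathrm{Im}(e^{-i\theta}u)$, since $t$ is real), so your conclusion is unaffected. The endpoint computations (the factorization at $t=0$, the equidistributed asymptotics as $t\to\infty$, the expansion $u_1=\tfrac{k-1}{k}te^{i\theta}(1+\tfrac{(k-1)^{k-1}}{k^k}t^{k-1}e^{i(k-1)\theta}+\cdots)$ giving $\arg(e^{-i\theta}u_1)>0$ for $\theta\in(0,\tfrac{\pi}{k-1})$, and the double root at $(s,\theta,z)=(\tfrac12,0,\tfrac12)$) all check out. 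To fully close (1) and (3) you should still record two routine facts: that the $k$ announced target sectors are pairwise disjoint for $\theta\in[0,\tfrac{\pi}{k-1}]$ (a short inequality check, which is what actually yields distinctness of arguments rather than the line-crossing remark alone), and that the implicit function theorem applies along each path because the discriminant vanishes in this parameter range only at $(s,\theta)=(\tfrac12,0)$ --- precisely the collision you treat separately in (4).
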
 

\begin{lemma}\label{lemma:roots_opposed} We consider \eqref{3_par} with  $\alpha=0$, $\theta \in [0,\frac{\pi}{k-1}]$ and $s\in[0,1]$. Let $z_0=0$, and $z_1, \dots, z_k$ be the other singular points. 
Then two distinct nonzero roots $z_j$ and $z_\ell$, $j,\ell>0$,  cannot point in opposite directions unless $\theta=0$ or $\theta=\frac{\pi}{k-1}$.\end{lemma}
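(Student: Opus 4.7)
The plan is an algebraic argument. Suppose for contradiction that $z_j,z_\ell$ are two distinct nonzero roots with $z_\ell=-\rho z_j$ for some $\rho>0$. Both satisfy $Q(z):=z^k-Az+B=0$, where $A:=k(1-s)^{k-1}$ is real and nonnegative and $B:=(k-1)s^ke^{i\theta}$. The first key step is to subtract $Q(-\rho z_j)=0$ from $Q(z_j)=0$; the constant term $B$ cancels, and after dividing by $z_j\neq0$ one is left with
$$\bigl[1-(-1)^k\rho^k\bigr]\,z_j^{k-1}=A(1+\rho).$$
Both sides of this relation are real, so, provided the coefficient $1-(-1)^k\rho^k$ on the left does not vanish, one forces $z_j^{k-1}\in\R$. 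That coefficient vanishes only when $k$ is even and $\rho=1$, and then the relation itself forces $A=0$, i.e.\ $s=1$.

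Assuming $z_j^{k-1}\in\R$, I would then return to $Q(z_j)=0$ and rewrite it as $B=z_j\bigl(A-z_j^{k-1}\bigr)$. Since $A$ and $z_j^{k-1}$ are real, the factor $A-z_j^{k-1}$ is real, so $B$ is a real scalar multiple of $z_j$. Hence $\arg(B)\equiv\arg(z_j)\pmod{\pi}$. Combining this with $\arg(B)=\theta$ and with the congruence $(k-1)\arg(z_j)\equiv0\pmod{\pi}$ coming from $z_j^{k-1}\in\R$, one obtains $(k-1)\theta\equiv0\pmod{\pi}$. In the interval $\theta\in[0,\pi/(k-1)]$, the only two solutions are $\theta=0$ and $\theta=\pi/(k-1)$, which is the claim.

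The step I expect to be most delicate is the boundary configuration $k$ even, $s=1$, $\rho=1$, where the coefficient in the subtracted equation is identically zero and the argument above gives no information. In that degenerate case $Q$ reduces to $z^k+B$, and for $k$ even the roots genuinely come in opposite pairs for every value of $\theta$, so the conclusion of the lemma cannot hold there. This is an isolated boundary stratum at $s=1$; the lemma is to be read as an assertion about the interior $s\in[0,1)$ (consistent with the generic bifurcation-diagram setting of the paper), with this degenerate slice handled separately. Apart from this exceptional stratum the proof is a short calculation whose only delicate bookkeeping is the manipulation of $\arg$-congruences modulo $\pi$.
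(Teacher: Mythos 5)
Your argument is essentially the paper's own proof: the paper writes $z_j=r_je^{i\phi}$, $z_\ell=-r_\ell e^{i\phi}$, subtracts the two equations to obtain $\left(r_j^k-(-1)^kr_\ell^k\right)e^{i(k-1)\phi}=k(1-s)^{k-1}(r_j+r_\ell)$, concludes $e^{i(k-1)\phi}=\pm1$, and substitutes back into $z^k-k(1-s)^{k-1}z+(k-1)s^ke^{i\theta}=0$ --- exactly your computation with $\rho=r_\ell/r_j$, and you are in fact more careful than the paper, which silently passes over the vanishing of the coefficient ($k$ even, $\rho=1$, forcing $s=1$). The one point you overlook is the symmetric degeneracy at $s=0$: there the constant term $B$ vanishes, so your final step $\arg B\equiv\arg z_j\ (\mathrm{mod}\ \pi)$ gives no information, and indeed for $k$ odd the nonzero roots are the $(k-1)$-st roots of $k$ and do come in opposite pairs for every $\theta$; like your exceptional stratum at $s=1$, this is harmless only because the identifications \eqref{rel:quotient} make $\theta$ meaningless at $s=0$, so your closing claim that the $s=1$ stratum is the only exception should be amended to include $s=0$.
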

\begin{proof} Suppose that $z_j= r_je^{i\phi}$ and $z_\ell = -r_\ell e^{i\phi}$. Then 
$\left(r_j^k-(-1)^kr_\ell^k\right)e^{i(k-1)\phi}-k(1-s)^{k-1}(r_j+r_\ell)=0$, from which it follows that $e^{i(k-1)\phi}= \pm1$. Substituting into $z^k-k(1-s)^{k-1}z+(k-1)s^ke^{i\theta}=0$ yields the result.  \end{proof}

\subsection{Shape of the periodgon} 
To understand the shape of the periodgon we need to understand the boundaries of the periodic domains of the singular points. We conjecture that the boundaries of the periodic domains of the $z_j$ for $j>1$ always consist of a single homoclinic loop: this is supported by numerical simulations. It is only the boundaries of the periodic domains of $z_0$ and $z_1$ which undergo several bifurcations when the parameters vary. This is what we study now. We start with the situations $s=0$ and $s=1$, and then vary $s\in (0,1)$. All these are steps to prove the following theorem

\begin{theorem}\label{thm:periodgon}\begin{enumerate} 
\item 
The boundary of the periodic domain of $z_0$ has more than one homoclinic loop for 
$$\begin{cases} 
\theta= \frac{2j\pi}{k-1}, &\quad k \: {\rm odd}, \: {\rm and}\: s\in (0,1),\\
\theta= \frac{(2j-1)\pi}{k-1}, &\quad k \: {\rm even}, \: {\rm and}\: s\in (0,1).\end{cases} $$

\noindent The boundary of the periodic domain of $z_1$ has more than one homoclinic loop for 
$$
\theta=\frac{2j\pi}{k-1}, \quad {\rm all}\:\: k, \: {\rm and}\: s\in\left(0,\frac12\right).$$
\item There are no other bifurcations of the periodgon
\begin{itemize}
\item In the neighborhood of $s=0$;
\item In the neighborhood of $s=1$;
\item In the neighborhood of $\theta=\frac{2j\pi}{k-1}$, $j=0, \dots, k-2$;
\item For even $k$, in the neighborhood of $\theta= \frac{(2j-1)\pi}{k-1}$, $j=0, \dots, k-2$. 
\end{itemize}
\end{enumerate}
\end{theorem}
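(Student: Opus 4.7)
My plan is to combine the symmetry analysis of Section~\ref{sec:symmetries} with the eigenvalue constraints of Lemmas~\ref{lemma_crossing}--\ref{sign_re_eig} and the root-localisation of Proposition~\ref{KR}. For Part~(1), I first match the critical $\theta$-values to a symmetry of \eqref{3_par}: using Proposition~\ref{prop:symmetry} together with the identifications \eqref{rel:quotient} that absorb rotations into $\alpha$, the values $\theta = 2j\pi/(k-1)$ for $k$ odd and $\theta = (2j-1)\pi/(k-1)$ for $k$ even are precisely those at which \eqref{3_par} is invariant under a reflection $\sigma$ about a real line through $z_0=0$. By Lemma~\ref{sign_re_eig}, $\lambda_0>0$ in this frame, so $z_0$ is a center of the rotated field $iP(z)$, which inherits the reflection as a reversibility $(z,t)\mapsto(\sigma(z),-t)$. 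The periodic domain of $z_0$ is therefore $\sigma$-invariant, and by Proposition~\ref{KR}(3)--(4) the axis of $\sigma$ carries at least one further singular point ($z_1$ for $s\in(0,\tfrac12)$ and $\theta = 2j\pi/(k-1)$, or a pair of outer roots in the even-$k$ case). Tracing the $2k$ separatrices of $\infty$ of $iP(z)$, which pair up under the reversibility since none can lie on the axis (else it would be both attracting and repelling), I argue that the pair of separatrices bounding the $z_0$-periodic domain cannot be $\sigma$-invariant in the presence of a fixed singular point on the axis; hence two separatrix pairs must be used, producing two homoclinic loops exchanged by $\sigma$. The analogous argument applied at $z_1$, which is itself $\sigma$-fixed exactly when $\theta=2j\pi/(k-1)$ and $s\in(0,\tfrac12)$ by Proposition~\ref{KR}(4), yields the second bifurcation family; for $s\geq\tfrac12$ the reflection swaps $z_1$ with $z_k$ rather than fixing $z_1$, and the argument fails, consistent with the stated range.

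For Part~(2), Lemma~\ref{lemma_crossing} already restricts candidate bifurcations involving $z_0$ to $\theta=\pi\ell/(k-1)$; the parity split in~(1) reflects whether the corresponding symmetry is the reflection of Proposition~\ref{prop:symmetry} or the time-reversing reversibility of the proposition that follows it, only the reflection forcing a further singular point onto the axis and hence producing a $z_0$-bifurcation. Between consecutive critical values no $\lambda_j$ is collinear with $\lambda_0$, so the $z_0$-periodic domain keeps its single-loop structure. To exclude bifurcations of the outer $z_j$ ($j\geq 2$) in a neighborhood of the critical loci, I combine Proposition~\ref{KR}(3), which confines each $z_j$ to a disjoint sector of arguments, with Lemma~\ref{lemma:roots_opposed}, which prevents two such roots from being antipodal except at $\theta\in\{0,\pi/(k-1)\}$; via \eqref{eigenvalues} these localisations give enough control on $\arg\lambda_j$ to rule out pairwise collinearity nearby. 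The limits $s\to 1$ and $s\to 0$ are handled separately: near $s=1$ the periodgon degenerates to the one-parameter family $\dot z=z(z^k+\eps_0)$ of \cite{CR}, whose bifurcation set is already known to contain only the listed structure; near $s=0$ the picture of Figure~\ref{bif_parabolic_s=0} makes the listed bifurcations manifest, as the two long edges $\nu_0,\nu_1$ wind by almost a full turn while $\theta$ circuits once.

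The main obstacle is the geometric step in~(1) promoting $\sigma$-invariance of the periodic domain to the existence of \emph{two} homoclinic loops rather than one: a single $\sigma$-invariant loop is a priori topologically permissible, and excluding it requires precisely the separatrix-pairing argument sketched above, whose delicate point is to verify that the pair bounding the $z_0$-domain cannot be self-conjugate when an additional singular point sits on the axis. A practical way to carry this out is to unfold the symmetric configuration slightly off the critical $\theta$, check directly that the resulting generic periodgon contains exactly two horizontal chords meeting at the $\nu_0$-vertex (using Lemmas~\ref{lemma_crossing} and~\ref{sign_re_eig} to control the nearby edge directions), and then return to the symmetric value by continuity of the periodgon edges. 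The remaining bookkeeping, in particular the parity alignment between the reflection of Proposition~\ref{prop:symmetry} and the reversibility proposition that follows it, is a direct if somewhat tedious calculation.
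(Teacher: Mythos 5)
Your overall strategy --- reducing to the symmetric values of $\theta$, passing to the rotated field $\dot z=iP_\eps(z)$, and exploiting the singular points lying on the symmetry axis --- is the same as the paper's, and your use of Lemma~\ref{lemma_crossing} to confine possible multi-loop configurations of $z_0$ to $\theta=\pi\ell/(k-1)$ is correct and useful. But the step you yourself flag as the main obstacle is exactly where the proposal fails, and your fallback does not repair it. The paper's mechanism is not merely that the periodic domain of $z_0$ is $\sigma$-invariant: it is that at $\theta=0$ (and, after the rotation $Z=e^{-i\pi/(k-1)}z$, at $\theta=\pi/(k-1)$) \emph{all} the real singular points are \emph{simultaneously} centers of the single reversible field $iP_\eps$, because their eigenvalues are simultaneously real. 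Their periodic domains are then pairwise disjoint, and any point having real centers on both sides of it along the axis ($z_1$ between $0=z_0$ and $z_k$ for $s<\frac12$; $z_0$ between $z_{(k+1)/2}$ and $z_1$ for $k$ odd; $Z_0$ between $Z_{k/2+1}$ and $Z_1$ for $k$ even at $\theta=\pi/(k-1)$) must have its domain bounded by two distinct loops, one shared with each neighbour. Your separatrix-pairing sketch never invokes this simultaneity, and the proposed workaround --- counting horizontal chords of the periodgon slightly off the critical $\theta$ and passing to the limit --- cannot work: at generic parameters every periodic domain has a single homoclinic loop and there are no such chords; the chords appear only at the bifurcation value itself, where the edge order is precisely what is discontinuous.

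Two further gaps. First, the parity split is misattributed: at $\theta=0$ the system is symmetric with respect to $\R$ for \emph{every} $k$ (Proposition~\ref{prop:symmetry}); what depends on the parity of $k$ is whether the real polynomial $z^k-k(1-s)^{k-1}z+(k-1)s^k$ (resp.\ its rotation at $\theta=\pi/(k-1)$) has a root on $\R^-$, i.e.\ whether $z_0$ sits in the interior of the chain of real centers or at its end --- not which of the two symmetry propositions applies. Second, your treatment of $s\approx1$ is incorrect: the limit family is $\dot z=z(z^k+\eps_0)$, not the family $\dot z=z^{k+1}+\eps$ of \cite{CR}, and at $s=1$ the boundary of the periodic domain of $z_0$ degenerates to $k$ homoclinic loops for every $\theta$; the substance of the proof there is to determine, from the signs of ${\rm Im}(\lambda_j)$ in \eqref{eigenvalues2}, in which direction each of these $k$ loops breaks for $s$ slightly below $1$, which your proposal omits entirely. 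Finally, your appeal to Proposition~\ref{KR}(3) and Lemma~\ref{lemma:roots_opposed} to exclude bifurcations of the outer points $z_j$, $j\geq2$, everywhere is only heuristic; the paper establishes part (2) only in neighbourhoods of the listed loci, by continuity from the explicit phase portraits computed there, and leaves the global exclusion as Conjecture~\ref{conj:periodgon}.
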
  

This leads to the conjecture.

\begin{conjecture}\label{conj:periodgon}
 The only bifurcations of the periodgon occur
\begin{itemize} 
\item along the rays $\theta= \frac{2j\pi}{k-1}$ for $k$ odd;
\item along the rays $\theta= \frac{(2j-1)\pi}{k-1}$ and the half-rays $\theta= \frac{2j\pi}{k-1}$, $s\in\left(0,\frac12\right)$ for $k$ even.
\end{itemize} 
Moreover, only the singular points $z_0$ and $z_1$ can have more than one homoclinic loop. \end{conjecture}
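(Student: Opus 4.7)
The plan is to combine the symmetry analysis already set up in Sections~\ref{sec:symmetries} and~\ref{sec:rot} with the eigenvalue criterion of Lemma~\ref{lemma_crossing} to pin down exactly where the periodgon bifurcates. First I would record the reductions: by Section~\ref{sec:rot} it suffices to work at $\alpha=0$, and by the rotation-of-order-$k$ relation in \eqref{rel:quotient} together with Proposition~\ref{prop:symmetry} we may further restrict $\theta$ to the fundamental sector $[0,\pi/(k-1)]$. All bifurcation loci in the theorem are then obtained from loci in this sector by the shifts $\theta\mapsto\theta+2\pi j/(k-1)$, producing the $k-1$ stated rays.

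For Part~(1), the key observation is that the values $\theta=\pi\ell/(k-1)$ are exactly those for which the change of variable $Z=e^{i\pi\ell/(k-1)}z$ (followed by a real time rescaling) sends \eqref{3_par} at $\alpha=0$ to a polynomial vector field $\dot Z=Z(Z^k+\tilde\eps_1Z+\tilde\eps_0)$ with $\tilde\eps_1,\tilde\eps_0\in\R$. This system is invariant under reflection in the real axis, which fixes $z_0=0$. The periodic domain of $z_0$ is therefore $\sigma$-invariant. To upgrade invariance to ``more than one homoclinic loop'', I would analyze the $2k$ equispaced sectors at the pole $\infty$: depending on the parity of $k$ and $\ell$ either an attracting separatrix lies on the real axis (forcing the symmetric periodic region to close along that axis, producing two symmetric loops) or the attracting and repelling separatrices straddle the axis and must pair off into several conjugate loops enclosing symmetric groups of the $z_j$'s. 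The arithmetic in Lemma~\ref{lemma_crossing} together with the sign information in Lemma~\ref{sign_re_eig} tells us which parity occurs, giving the odd-$k$/even-$\ell$ versus even-$k$/odd-$\ell$ dichotomy in the theorem. For $z_1$ the same reflection argument applies, but only for $s\in(0,\tfrac12)$: by Proposition~\ref{KR}(4) and Lemma~\ref{lemma:roots_opposed}, along $\theta=2j\pi/(k-1)$ and $s<\tfrac12$ the points $z_1$ and $z_k$ are both real (on the reflection axis) and distinct, so the $\sigma$-invariant periodic domain of $z_1$ must have a boundary component on the axis enclosing $z_1$ alone; for $s>\tfrac12$, $z_1$ and $z_k$ leave the axis as complex conjugates and the extra loop is absorbed into the parabolic bifurcation at $s=\tfrac12$ coming from \eqref{Delta}.

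For Part~(2), I would use Lemma~\ref{lemma_crossing} as a \emph{necessary} condition: a bifurcation of the boundary of the periodic domain of $z_0$ forces $\lambda_0$ to be $\R$-collinear with some $\lambda_j$ (because at a bifurcation $z_0$ and $z_j$ must simultaneously be centers of a common rotated vector field \eqref{rotated_vf}), and this happens only at $\theta=\pi\ell/(k-1)$. A parallel statement for $z_1$ uses Proposition~\ref{KR} and Lemma~\ref{lemma:roots_opposed} to exclude $\R$-collinearity of $\lambda_1$ with any other $\lambda_j$ outside the stated rays. In the neighborhoods of $s=0$ and $s=1$ I would invoke the degenerate periodgon of Section~\ref{sec:degenerate}: near $s=0$ the two sides corresponding to $z_0$ and $z_1$ blow up as pictured in Figure~\ref{bif_parabolic_s=0}, whose bifurcation diagram is transverse to the $\theta$-direction and bifurcates only along the axes of the reflection; near $s=1$ the limit is the known periodgon of $\dot z=z(z^k+\eps_0)$ studied in \cite{CR}, which bifurcates only at $\theta=2j\pi/(k-1)$ (respectively $(2j-1)\pi/(k-1)$) for odd (respectively even) $k$. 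Continuity of the periodgon away from parabolic points then excludes additional nearby bifurcations.

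The main obstacle I anticipate is the geometric step in Part~(1) that converts the $\sigma$-invariance of the periodic domain of $z_0$ into the \emph{strict} assertion that the boundary has more than one homoclinic loop, rather than a single reflection-invariant loop. This requires a careful bookkeeping of which attracting and repelling separatrices at $\infty$ lie on, or flank, the symmetry axis, and it is precisely this bookkeeping that depends on the parities of $k$ and $\ell$ and produces the different parity conditions appearing in the two cases of Part~(1).
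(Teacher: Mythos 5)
There is a genuine gap, and in fact the statement you are trying to prove is stated in the paper as a \emph{conjecture}: the paper only establishes Theorem~\ref{thm:periodgon}, namely that the listed loci \emph{are} bifurcation loci and that no \emph{other} bifurcations occur in neighborhoods of $s=0$, $s=1$ and the rays $\theta=\frac{j\pi}{k-1}$; the global exclusion of all other bifurcations, and the claim that only $z_0$ and $z_1$ can acquire several homoclinic loops, are supported only by numerical evidence. Your Part~(1) is essentially the paper's argument (reduce to $\alpha=0$ and $\theta\in\{0,\frac{\pi}{k-1}\}$ by the symmetries, use the reversibility of $\dot z=iP_\eps(z)$ with respect to the real axis, and read off which singular points lie on the axis and are simultaneously centers), and you correctly flag that the parity bookkeeping of the separatrices at $\infty$ is the delicate step --- that bookkeeping is exactly what the paper carries out case by case using the signs of ${\rm Im}(\lambda_j)$, so this part is recoverable.

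The fatal problem is in your Part~(2). You take as a necessary condition for a bifurcation of the periodic domain of $z_0$ that $\lambda_0$ be $\R$-collinear with a single other eigenvalue $\lambda_j$, and then invoke Lemma~\ref{lemma_crossing} to confine bifurcations to $\theta=\frac{\ell\pi}{k-1}$. But a homoclinic-loop bifurcation occurs when two \emph{vertices} of the periodgon are joined by a segment in the direction $\nu_{j}$ inside the periodgon; since the vertices are partial sums of consecutive periods, the correct necessary condition is that some sum $\sum_{m\in S}\nu_m$ over a subset $S$ of singular points be $\R$-collinear with $\nu_{j}$. This is a much weaker condition than pairwise collinearity of eigenvalues (note $\nu_m=2\pi i/\lambda_m$, so sums of periods are not controlled by relations among individual $\lambda_m$), and Lemmas~\ref{lemma_crossing} and~\ref{lemma:roots_opposed} say nothing about it. This is precisely the obstruction that prevents the paper from upgrading Theorem~\ref{thm:periodgon} to a proof of Conjecture~\ref{conj:periodgon}: away from the symmetric loci and away from $s\in\{0,1\}$ one cannot currently rule out a partial sum of periods becoming collinear with $\nu_0$, $\nu_1$, or some $\nu_j$ with $j\ge 2$ (the last possibility being why ``only $z_0$ and $z_1$ bifurcate'' is itself conjectural). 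Your appeal to ``continuity of the periodgon away from parabolic points'' only reproduces the local statements already in Theorem~\ref{thm:periodgon}(2); it does not globalize them.
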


\subsubsection{Proof of Theorem~\ref{thm:periodgon}}

We discuss here (1) and (2) simultaneously. Indeed, (2) follows from  transversality properties for the situations studied in (1). 

\medskip\noindent{\bf The case $s=1$.} Using the change $z\mapsto Z= e^{-i\frac{\theta}k}z$ it suffices to study the boundary of the periodic domain of $z_0$ from the system $\dot Z = iZ(Z^k+k)$, for which all points are simultaneously centers. Moreover, the system is symmetric under $z\mapsto e^{i\frac{\pi}k} z$, and hence so is the boundary of the periodic domain of $z_0$. Then it consists of $k$ homoclinic loops as in Figure~\ref{boundary_0} (a) and (d). The periodgon in that case is degenerate to a long line segment corresponding to the vector $\nu_0$ and $k$ small segments corresponding to vectors $\nu_j=-\frac{\nu_0}{k}$, $j>0$.
\begin{figure}\begin{center}
\subfigure[$s=1$, $\theta=0$]{\includegraphics[width=3cm]{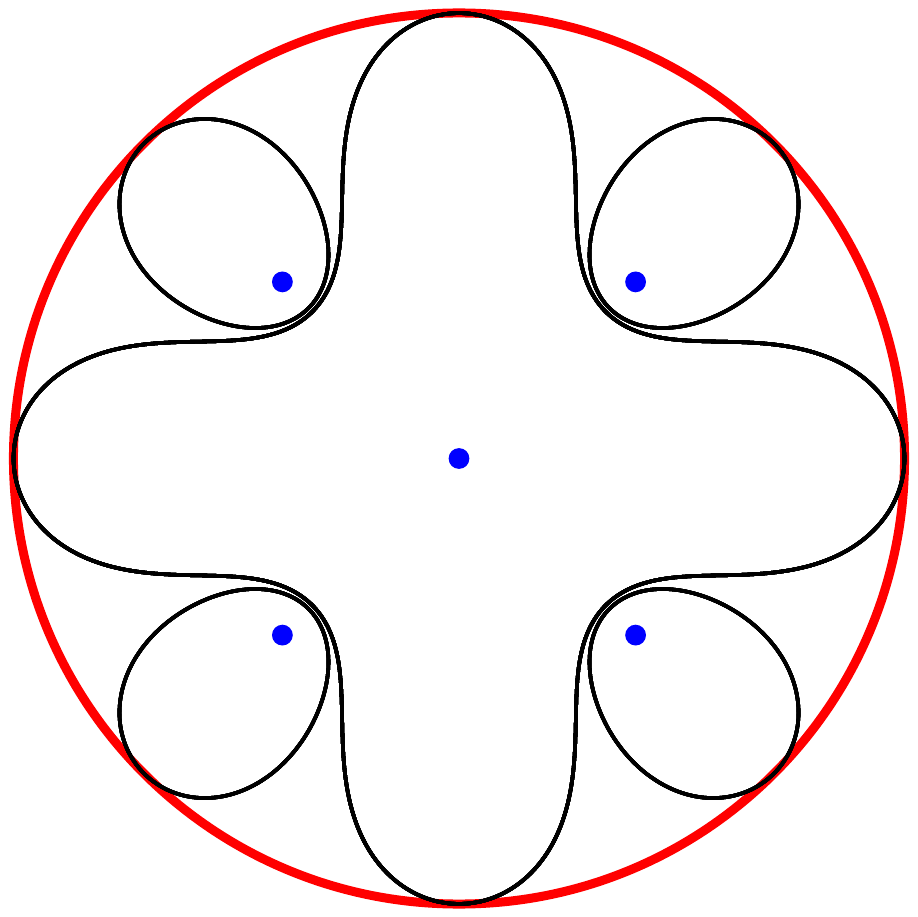}}\qquad\subfigure[$s\approx1$,  $\theta\in(0,\frac{\pi}{k-1})$]{\includegraphics[width=3cm]{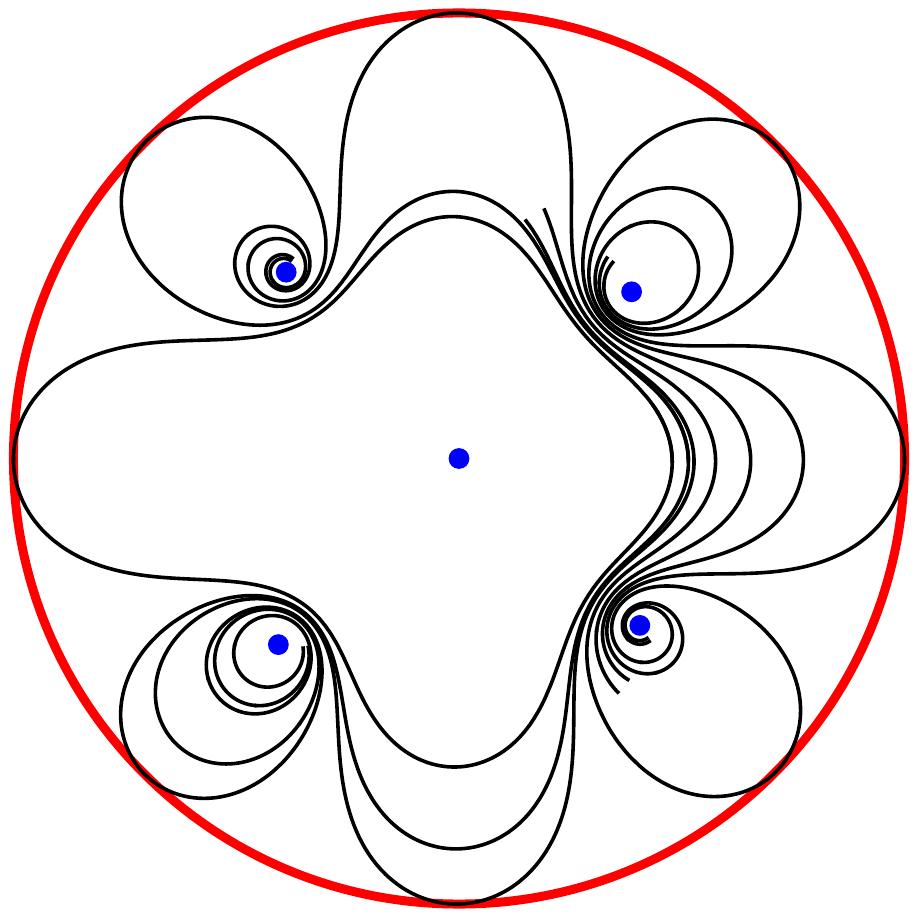}}\qquad\subfigure[$s\approx1$,  $\theta=\frac{\pi}{k-1}$]{\includegraphics[width=3cm]{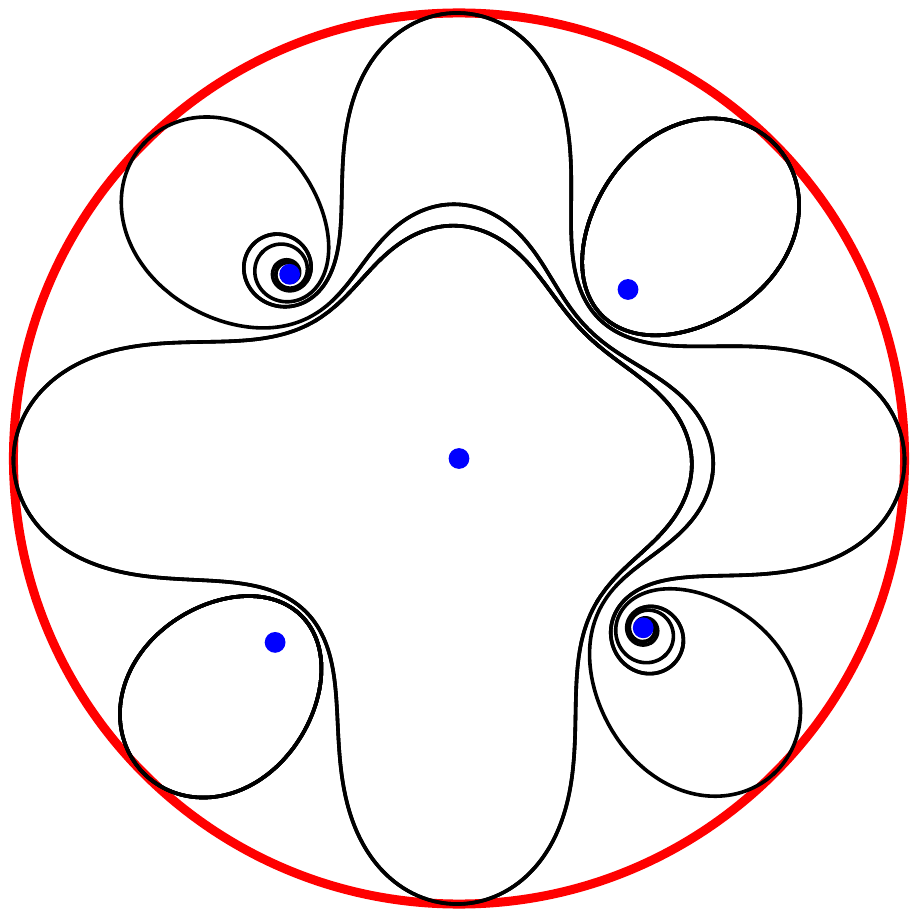}}\\
\subfigure[$s=1$, $\theta=0$]{\includegraphics[width=3cm]{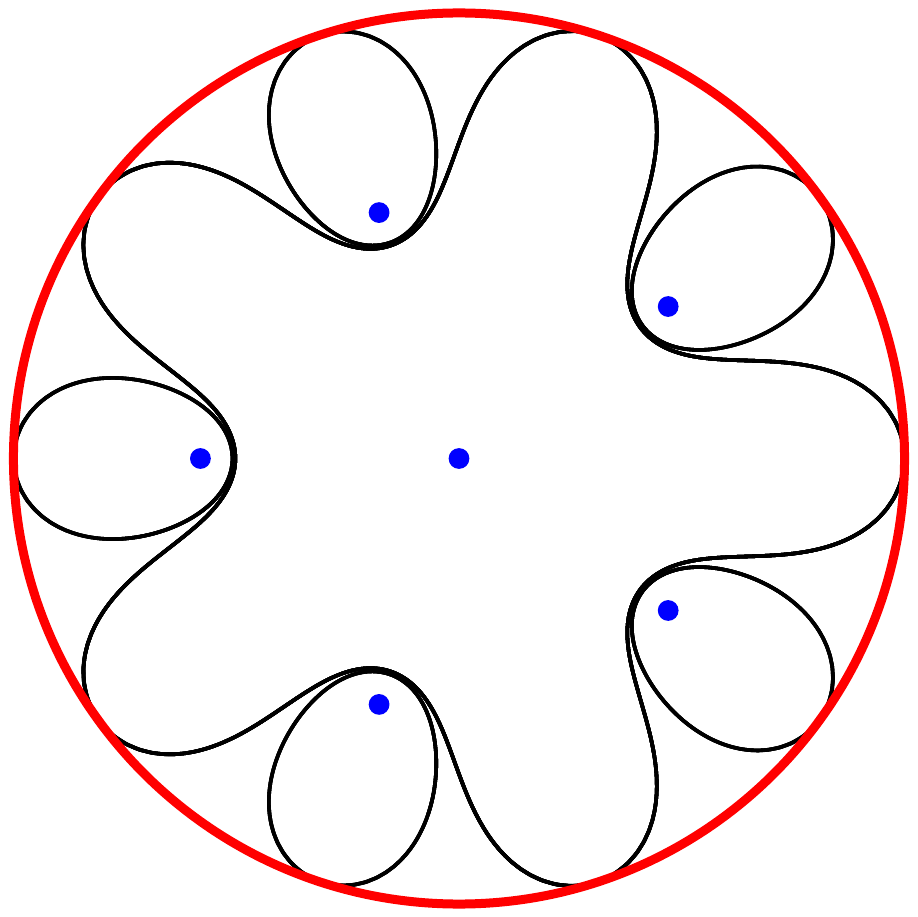}}\qquad\subfigure[$s\approx1$, $\theta=0$]{\includegraphics[width=3cm]{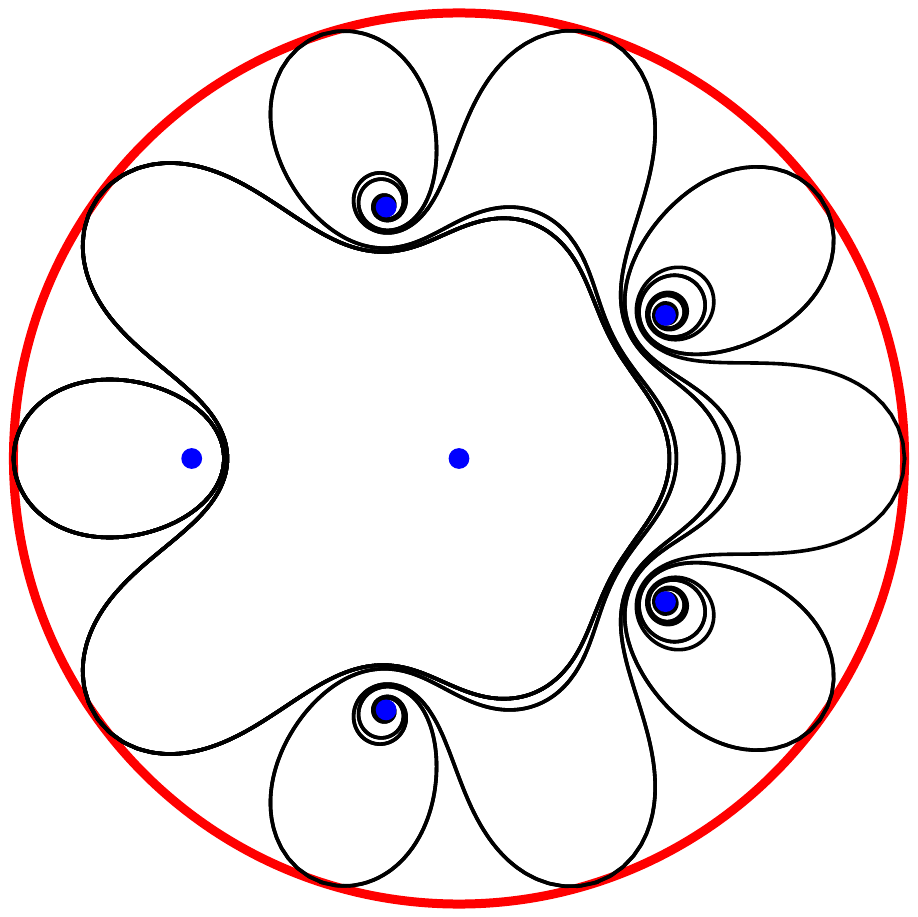}}\qquad\subfigure[$s\approx1$, $\theta=\frac{\pi}{k-1}$]{\includegraphics[width=3cm]{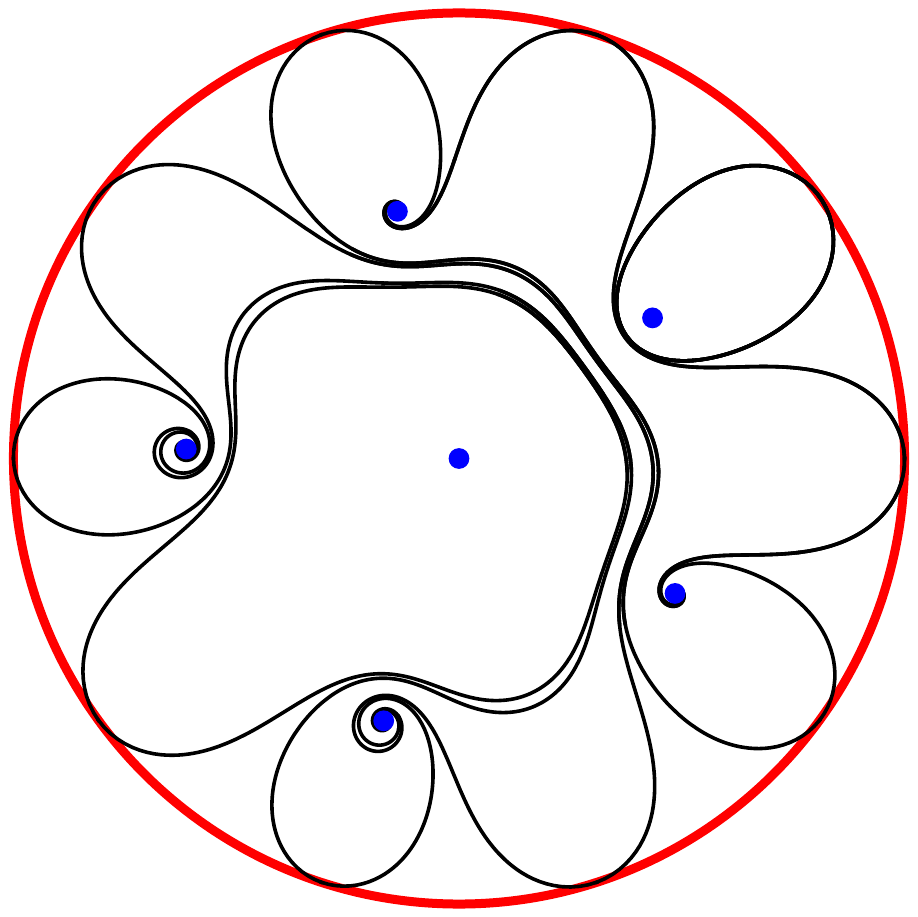}}\caption{The boundary of the periodic domain (homoclinic loop(s)) of $z_0$ for $s\approx 1$, $\alpha= \frac{\theta}{k}$ and $k=4$ (resp. $k=5$) on the upper (resp. lower) row. The figures are obtained by integrating on a disk the vector field $\dot z =e^{i\delta}P_\eps(z)$ so that $e^{i\delta} P_\eps'(z_0)\in i\R$. We see multiple homoclinic loops around  $z_0$ for $\theta=0$, $k$ odd and $\theta=\frac{\pi}{k-1}$, $k$ even.}\label{boundary_0}\end{center}\end{figure}

\medskip\noindent{\bf The case $s=0$}. In that case $z_0=z_1$ is a parabolic point. In the case of one parabolic point the periodgon has been defined in Section~\ref{sec:degenerate}, but it is not the limit when $s\to 0$ of the periodgon for $s\neq0$. However, to understand the periodgon for $s$ small we need to understand the periodic domains at $s=0$ of the nonzero singular points, and which separatrices of $\infty$ land at $z_0$. 
Looking at the system $\dot z = z^2(z^{k-1}-k)$, then $\lambda_j=k(k-1)z_j$ for $j\geq2$, from which it follows that each ${\rm Re}(\lambda_j)$ has the sign of ${\rm Re} (z_j)$. This allows drawing the phase portrait (see Figure~\ref{parabolic_cod1_origin}). Indeed, the sepal zones of the parabolic point separate the singular points into two groups: the attactive ones on one side, and the repelling ones on the other side. There are also two centers when $k\equiv 1\:(\text{mod}\:4)$: the basin of each center is surrounded by a sepal zone. 

The nonzero singular points are of the form $z_j=\exp\left(\frac{2\pi (j-1)}{k-1}i\right)$, $j=2, \dots, k$, with eigenvalues given in \eqref{eigenvalues}. Now, consider the system $\dot z = iP_\eps(z)$, which is reversible with respect to the real axis. There are one (resp. two) additional singular points on the real axis for $k$ even (resp. $k$ odd), which are centers. The singular points in the upper (resp. lower) half-plane are attracting (resp. repelling) for $s=0$, and that will remain the case for $s$ small. This yields the phase portraits in Figure~\ref{parabolic_rotated} for the vector field $\dot z =iP_\eps(z)$, since all attracting (resp. repelling) singular points are necessarily linked to the repelling (resp. attracting) sector of the parabolic point at $s=0$. 

\begin{figure}\begin{center}
\subfigure[$k=4$]{\includegraphics[width=3.5cm]{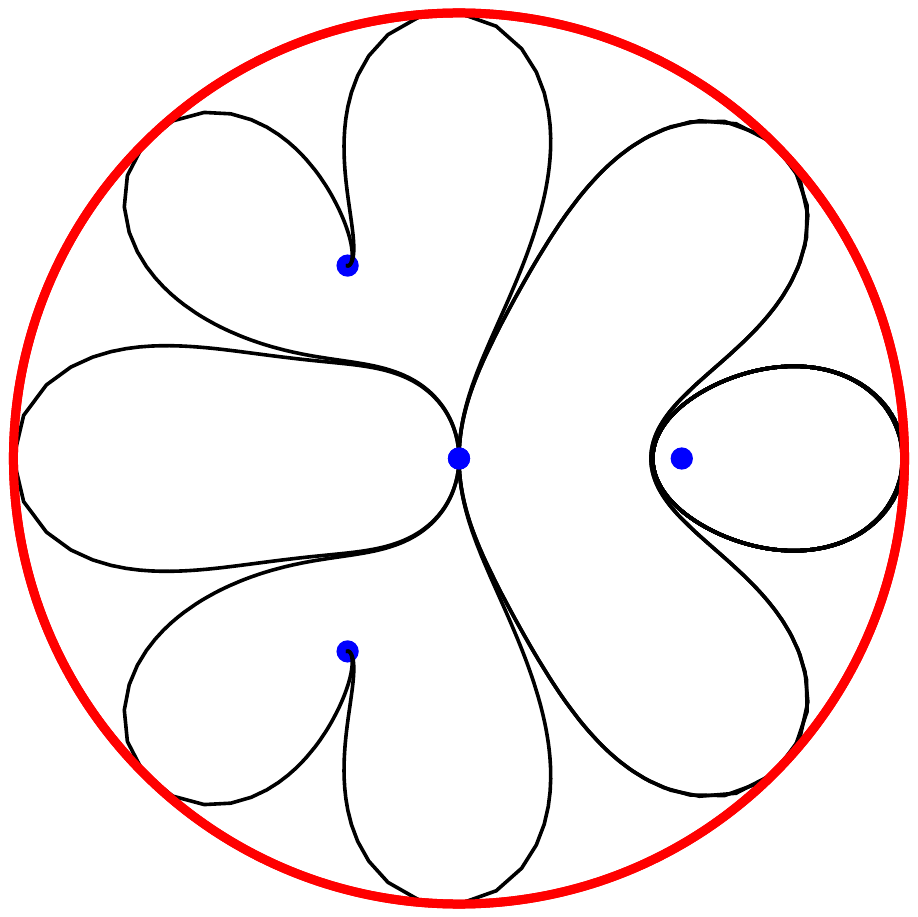}}\qquad\qquad\subfigure[$k=5$]{\includegraphics[width=3.5cm]{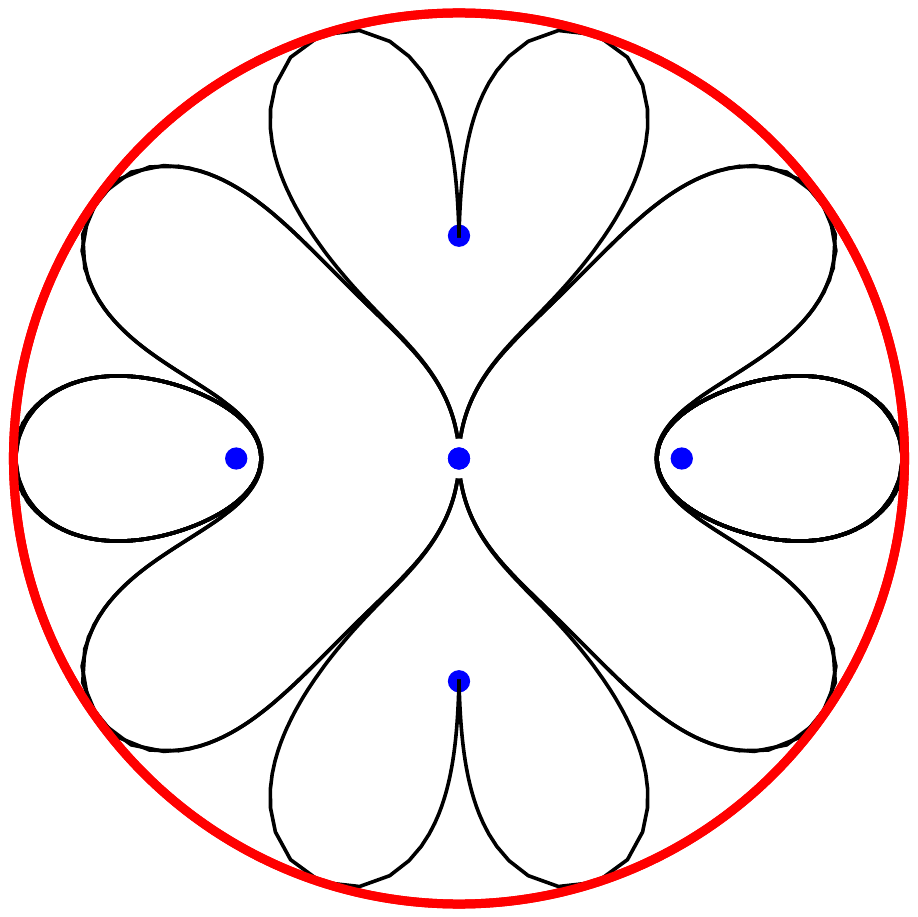}}\caption{The phase portrait of $\dot z = i(z^{k+1}- kz^2)$.}\label{parabolic_rotated} \end{center}\end{figure}

\medskip\noindent{\bf The case $\theta=\frac{2j\pi}{k-1}$.} (See Figures~\ref{boundary_0}(e) and \ref{boundary_01}(a) and (d).) It suffices to consider the case $\theta=0$, where the system is symmetric with respect to the real axis. For $s\in (0,\frac12)$ and $\theta=0$, the system has $3$ (resp. $4$) singular points on the real axis for $k$ even (resp. odd), and they are simultaneously
 centers of the system $\dot z = iP_\eps(z)$. Then $0=z_0<z_1<z_k$,  from which it follows that the periodic domain of $z_1$ is bounded by two homoclinic loops. When $s=\frac12$, the points $z_1$ and $z_k$ merge in a parabolic point. Then $z_1$ (resp. $z_k$) moves in the upper (resp. lower) half-plane for $s>\frac12$. Hence, the periodic domain of $z_1$ is between those of $z_2$ and $z_k$ and this is also valid for small $\theta$. Similarly for odd $k$ we have $z_{\frac{k+1}2}<z_0=0$, yielding that $z_0$ has two homoclinic loops. For small positive $\theta$, then the periodic domain of $z_0$ is between that of $z_{\lfloor\frac{k+1}2\rfloor}$ and that of $z_{\lceil\frac{k}2\rceil+1}$.  From \eqref{eigenvalues}, the singular point $z_j$, $j\geq2$, of  system $\dot z = iP_\eps(z)$ is attracting (resp. repelling) if ${\rm Im}(z_j)>0$ (resp. ${\rm Im}(z_j)<0$). Hence, from the symmetry, the singular points not on the real axis are linked two by two and the passage for these links is to the right of $z_0$.

 \medskip\noindent{\bf The case $\theta=\frac{(2j-1)\pi}{k-1}$.} (See Figures~\ref{boundary_0}(c) and (f) and \ref{boundary_01}(c) and (f).) It suffices to consider the case $j=1$. The change $Z=e^{-i\frac{\pi}{k-1}} z$ transforms the system into $\dot Z= e^{-i\frac{\pi}{k-1}} Z(-Z^k -k(1-s)^{k-1}Z+(k-1)s^k)$, which means that the periodic domains of the singular points along the axis $Z=0$ can be seen from the system $$\dot Z= \omega(Z) =i Z(-Z^k -k(1-s)^{k-1}Z+(k-1)s^k),$$ again a reversible system with respect to the real axis. For  $k$ even there is always a singular point $Z_{\frac{k}2+1}$ on $\R^-$, whose homoclinic loop is part of the boundary of the periodic domain of $z_0=Z_0=0$. Hence, there is always a bifurcation of the periodgon along the line $\theta=\frac{(2j-1)\pi}{k-1}$ for $k$ even. The eigenvalue at the singular points $Z_j$ of $\dot Z=\omega(Z)$ is again of the form $i\lambda_j$ for $\lambda_j$ defined in \eqref{eigenvalues} (with $z_j$ replaced by $Z_j$). Hence $Z_j$ is attracting (resp. repelling) if ${\rm Im}(Z_j)>0$ (resp. ${\rm Im}(Z_j)<0$). As in the case for $\theta=0$, each singular point of the upper half-plane is linked to one in the lower half-plane  and the passage is between $Z_0$ and $Z_1$. Hence (modulo the conjecture that the $Z_j$, $j\geq2$ always have a unique homoclinic loop,) there is no bifurcation of the periodgon for increasing $s\in (0,1)$ and the order of the sides is the same as the one for $s$ small, which will be discussed below.

\medskip\noindent{\bf The case $s$ close to $1$.} We start by computing the eigenvalues for $\alpha=\frac{\theta}{k}$.
Let $\lambda_j$ be the eigenvalue of $z_j$. Then,
\begin{equation}\begin{cases} \lambda_0= (k-1)s^k,\\
\lambda_j= k(k-1)\left((1-s)^{k-1}e^{-i\frac{(k-1)\theta}k}z_j-s^k\right), & j=1, \dots, k.\end{cases}\label{eigenvalues2}\end{equation} To compute the boundary of the periodic domain of $z_0$ we multiply the system and its eigenvalues by $i$. 
Then ${\rm Re}(i\lambda_j)>0$ if and only if ${\rm Im}(\lambda_j)<0$, and ${\rm Im}(\lambda_j)$ has the sign of ${\rm Im}(e^{-i\frac{(k-1)\theta}k}z_j)$. For $s=1$, then $z_j= (k-1)^{\frac1k}e^{i\frac{(2j-1)\pi}k}$. Hence 
$${\rm Im}(\lambda_j) \begin{cases} >0, & 1\leq j\leq \frac{k}2,\\
<0, &\frac{k+1}2<j \leq k,\\
<0 & j=\frac{k+1}2, \theta>0, \: $k$ \:\text{odd},\\
=0, &j=\frac{k+1}2, \theta=0, \:$k$ \:\text{odd}.\end{cases}$$
This gives the direction in which the homoclinic loops that surround all singular points $z_j$, $j>0$, are broken in $\dot z= iP_\eps(z)$ for $s$ close to $1$. Indeed for $s=1$ all singular points are centers. When $s\neq1$, the point $z_j$ becomes an  attracting (resp. repelling) focus and ${\rm Im}(\lambda_j) >0$ (resp. negative), and stays a center when ${\rm Im}(\lambda_j) =0$. Since the vector field can have no limit cycle, this gives the direction in which the homoclinic loop is broken. Of course, since everything is continuous, it suffices to see how it is broken for $\theta=0$, which has been studied above.
The case $\alpha=0$ comes by applying the change of variable $z\mapsto Z=e^{i\alpha}z$ as described in Section~\ref{sec:rot}. The factor $e^{-ik\alpha}$ has no influence on the shape of the period domains. Hence the periodic domains for $\alpha=0$ are obtained by rotating those for $\alpha=\frac{\theta}{k}$ of an angle $\alpha=\frac{\theta}{k}\in\left[0,\frac{\pi}{k(k-1)}\right]$.

\medskip\noindent{\bf The case $s$ close to $0$.} 
Here we consider the case $\alpha=0$.

For $s=0$, let us first compute the periodic domain of $z_j= k^{\frac1{k-1}} \exp\left(\frac{2\pi ji}{k-1}\right)$, $j\geq2$. It is the domain of the center at $z_j$ for $\dot z = i\exp\left(-\frac{2\pi ji}{k-1}\right)(z^{k+1}-z^2)$.  Letting $Z=  \exp\left(-\frac{2\pi ji}{k-1}\right)z$ changes the system to $\dot Z = iZ^2(Z^{k-1}-1)$, whose phase portrait appears in Figure~\ref{parabolic_rotated}. Because of the symmetry and the fact that $Z_j\in \R^+$, this shows that its periodic domain is bounded by a unique homoclinic loop. 
\begin{figure}
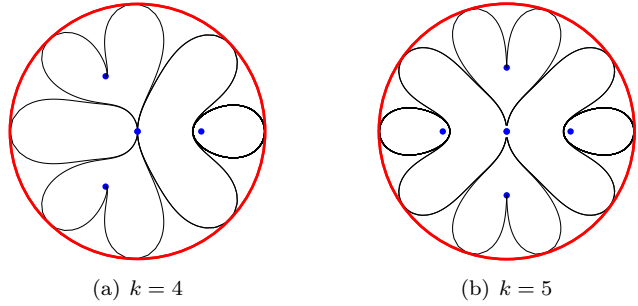
\begin{center}
\subfigure[$k=4$]{\includegraphics[width=3.5cm]{parabolic_rotated_5}}\qquad\qquad\subfigure[$k=5$]{\includegraphics[width=3.5cm]{parabolic_rotated_6}}\caption{The phase portrait of $\dot z = i(z^{k+1}- kz^2)$.}\label{parabolic_rotated} \end{center}\end{figure}

For $s\approx 0$, the boundary of the periodic domain (homoclinic loop(s)) of $z_0$ for different values of $\theta\in [0,\frac{\pi}{k-1}]$ and that of $z_1$ for $\theta\in\{0,\frac{\pi}{k-1}\}$ appear in Figure~\ref{boundary_01}. This comes from the knowledge at $s=0$, at $\theta=0$  and $\theta= \frac{\pi}{k-1}$, as studied above.

 \begin{figure}\begin{center}
\subfigure[$\theta=0$, $k=4$]{\includegraphics[width=3.2cm]{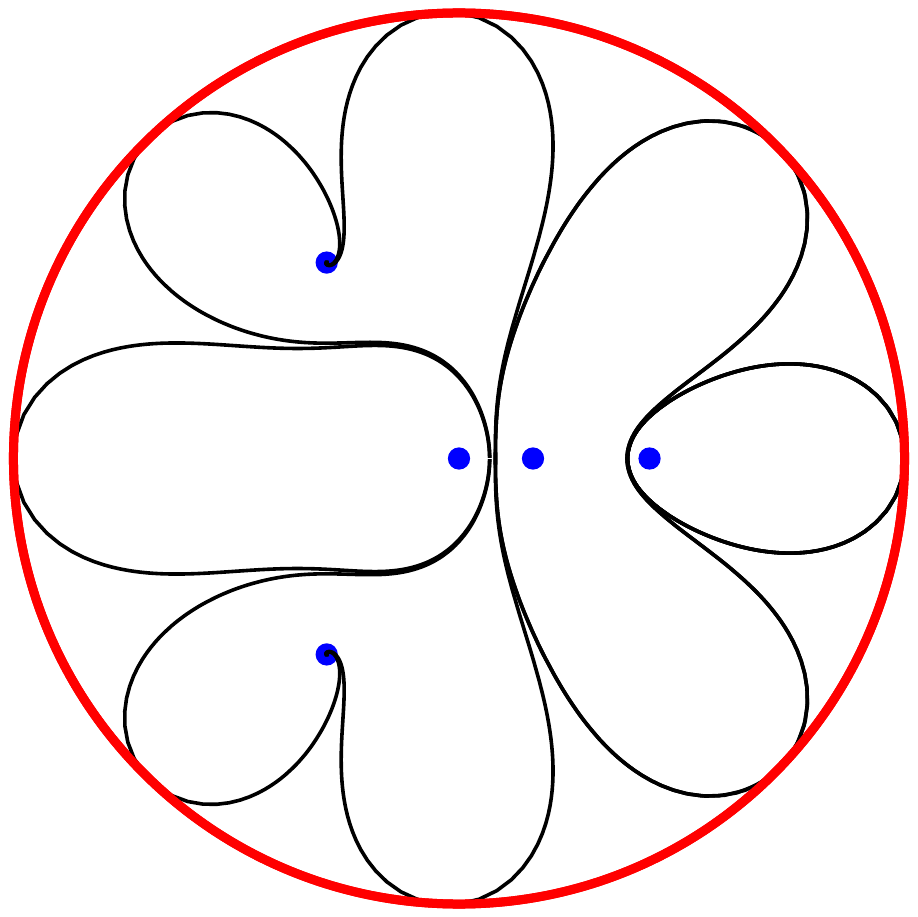}}
\qquad\subfigure[$\theta\in(0,\frac{\pi}{k-1})$, $k=4$]{\includegraphics[width=3.2cm]{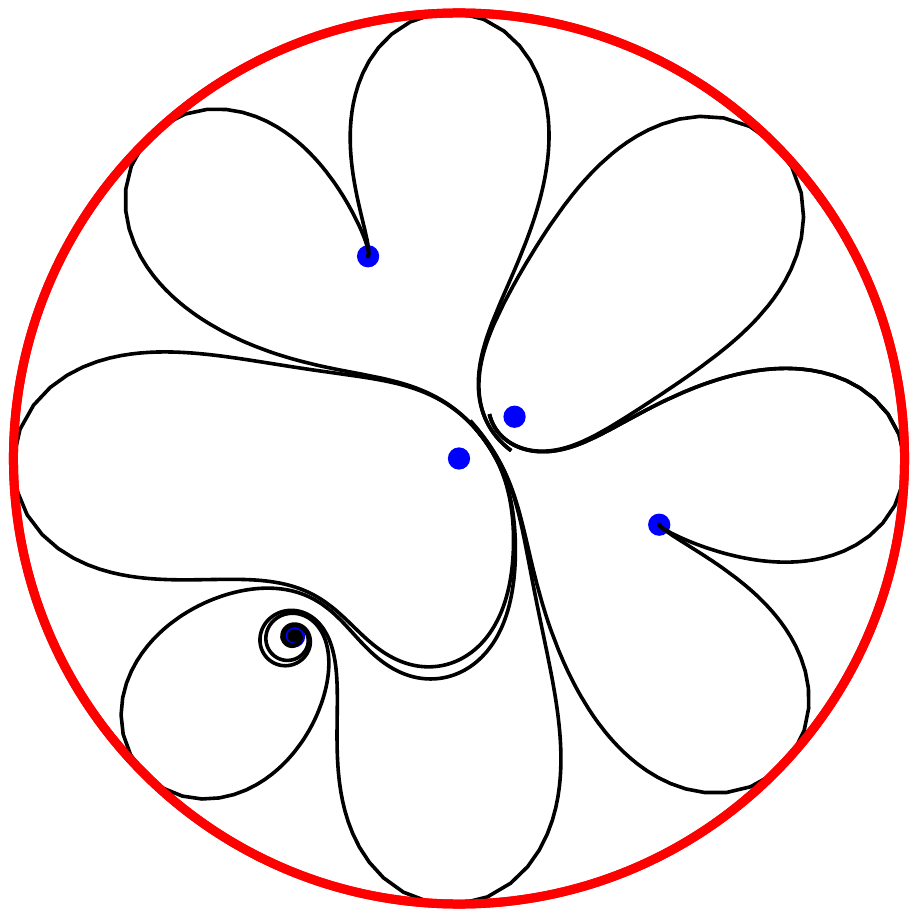}}\qquad
\subfigure[$\theta=\frac{\pi}{k-1}$, $k=4$][$\theta=\frac{\pi}{k-1}$, $k=4$]{\includegraphics[width=3.2cm]{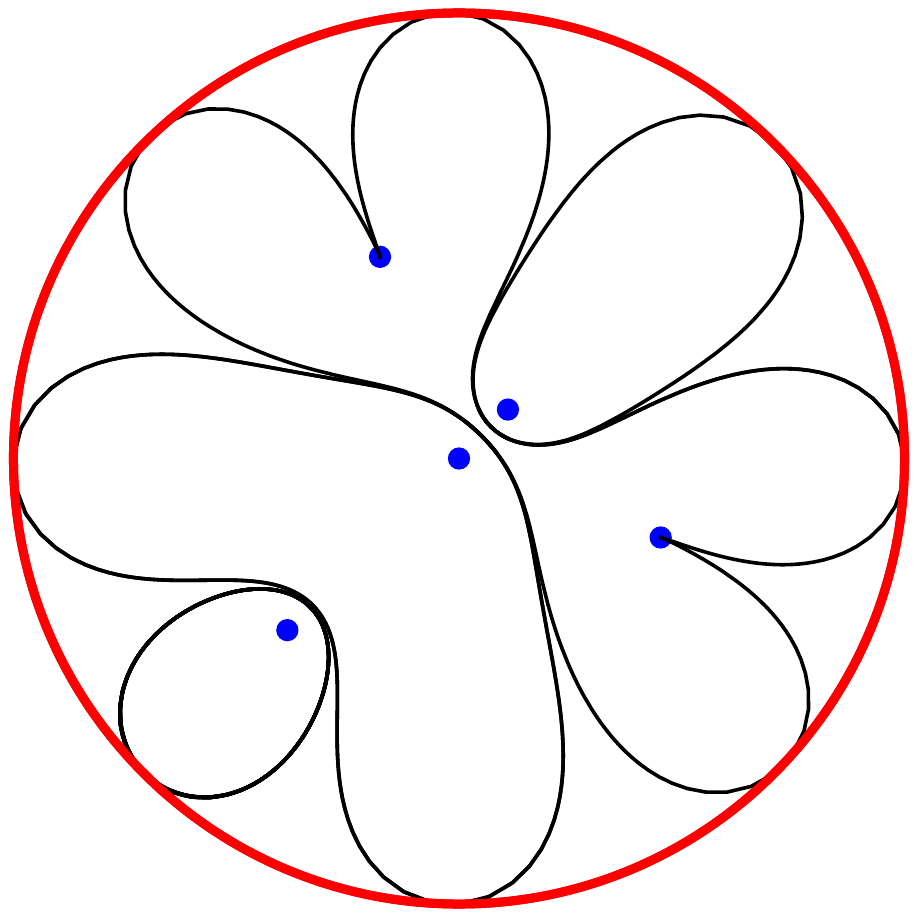}}\\
\subfigure[$\theta=0$, $k=5$]{\includegraphics[width=3.2cm]{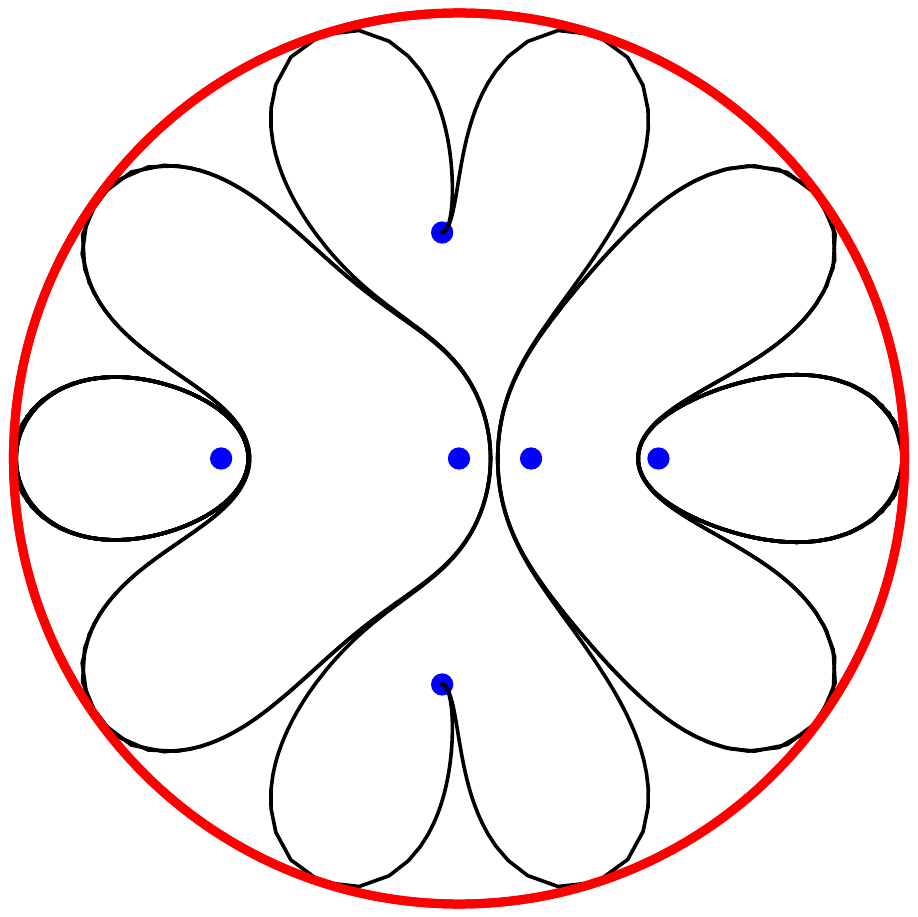}}\qquad\subfigure[$\theta\in(0,\frac{\pi}{k-1})$, $k=5$]{\includegraphics[width=3.2cm]{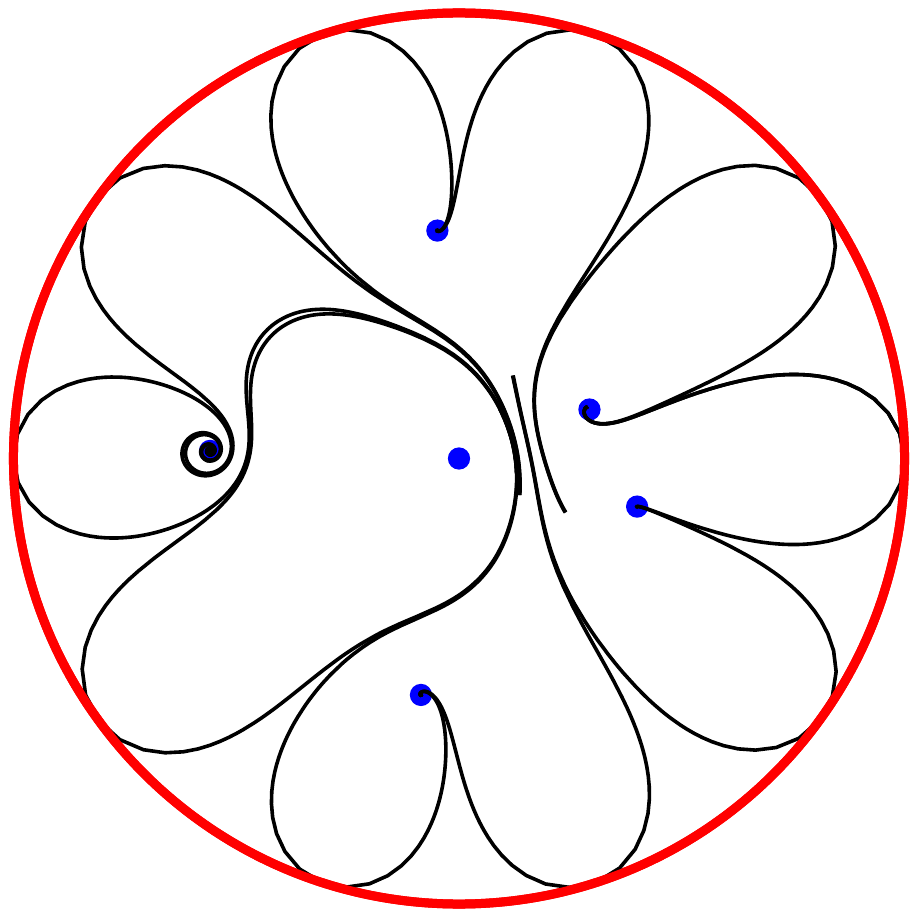}}\qquad\subfigure[$\theta=\frac{\pi}{k-1}$, $k=5$][$\theta=\frac{\pi}{k-1}$, $k=5$]{\includegraphics[width=3.2cm]{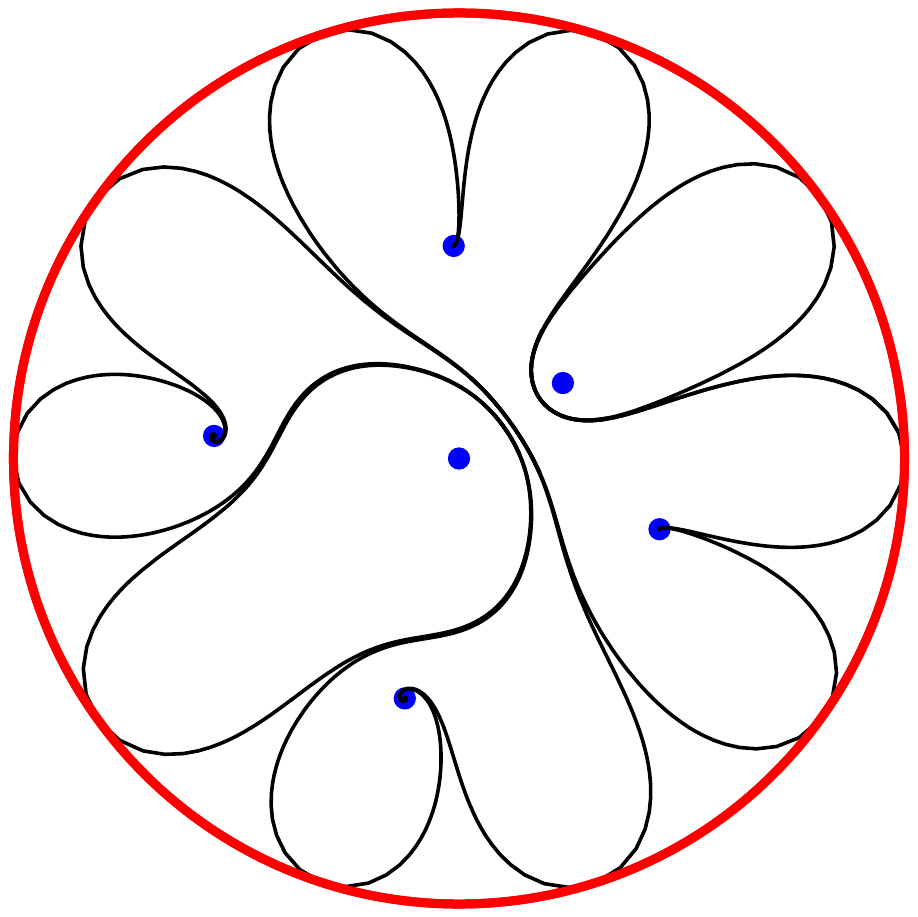}} \caption{For $s\approx 0$, the boundary of the periodic domain (homoclinic loop(s)) of $z_0$ for different values of $\theta\in [0,\frac{\pi}{k-1}]$ and that of $z_1$ for $\theta\in\{0,\frac{\pi}{k-1}\}$. The figures are obtained by integrating on a disk $\dot z =e^{i\delta}P_\eps(z)$ so that $e^{i\delta} P_\eps'(z_0)\in i\R$. We see multiple homoclinic loops around  $z_1$ for $\theta=0$, and around $z_0$ for $\theta=0$, $k$ odd and $\theta=\frac{\pi}{k-1}$, $k$ even.} \label{boundary_01}\end{center}\end{figure}

For  $\theta\in \left(0,\frac{\pi}{k-1}\right)$, the symmetry existing for $\theta=0$ is broken, and all periodic domains have one homoclinic loop. To find the periodic domain of $z_0$ we must now consider the vector field $\dot z = e^{i\left(\frac{\pi}2-\theta\right)}P_\eps(z)$. The points that were previously on the real axis are $z_k= k^{\frac1{k-1}}(1-s) -k^{-1}s^ke^{i\theta}+o(s^k)$ and, for $k$ odd, $z_{\frac{k+1}2}= -k^{\frac1{k-1}}(1-s) -k^{-1}s^ke^{i\theta}+o(s^k)$, yielding that ${\rm Re}(e^{i\left(\frac{\pi}2-\theta\right)}\lambda_k)>0$ (resp. ${\rm Re}(e^{i\left(\frac{\pi}2-\theta\right)}\lambda_{\frac{k+1}2})<0$ for $k$ odd). Then  $z_k$ (resp. $z_{\frac{k+1}2}$ for $k$ odd) is a repelling (resp. attracting) point of $\dot z = e^{i\left(\frac{\pi}2-\theta\right)}P_\eps(z)$. 

 Hence, the periodic domain of $z_0$ is between that of $z_{\lfloor\frac{k+1}2\rfloor}$ and that of $z_{\lceil\frac{k}2\rceil+1}$, while that $z_1$ is between $z_k$ and $z_2$.

\medskip\noindent{\bf The case $s\approx\frac12$ and $\theta\approx 0$.} There is a parabolic point with $z_1=z_k$ for $(s,\theta)=(\frac12, 0)$.  What happens here is very close to the situation studied in \cite{KR}, where the periodic domain of $z_1$ is bounded by two homoclinic loops for real $s<\frac12$ and by one homoclinic loop elsewhere. 

\medskip\noindent This ends the proof of Theorem~\ref{thm:periodgon}.\hfill $\Box$

\subsubsection{Discussion of Conjecture~\ref{conj:periodgon}}

The conjecture is motivated by the fact that the proposed bifurcation diagram for the periodgon is the simplest that connects the known bifurcations. Moreover, the numerical simulations show no bifurcations of the periodgon outside these bifurcation loci. See for instance Figures~\ref{fig:periodgon_K5} and \ref{fig:periodgon_K10}.

\begin{figure}\begin{center} 
\subfigure[]{\includegraphics[height=4cm]{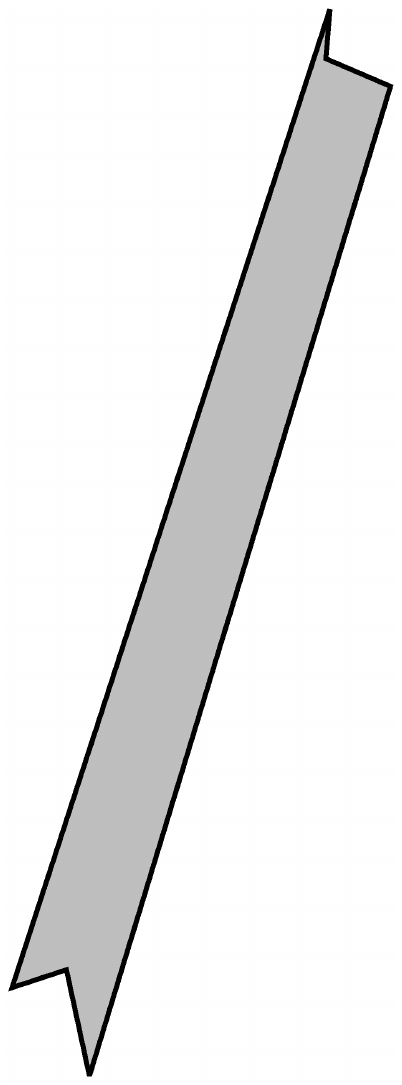}}\qquad\subfigure[]{\includegraphics[height=4cm]{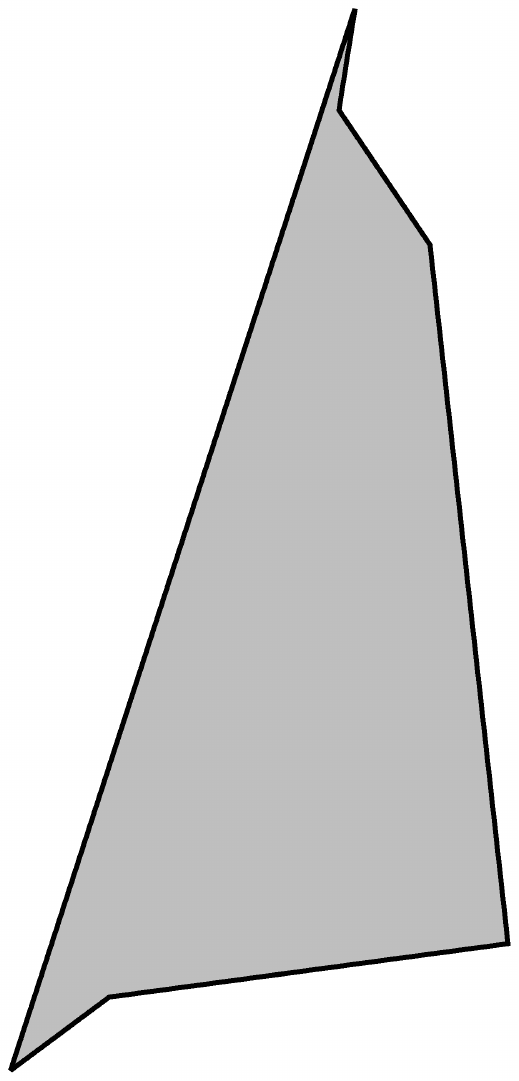}}\qquad\subfigure[]{\includegraphics[height=4cm]{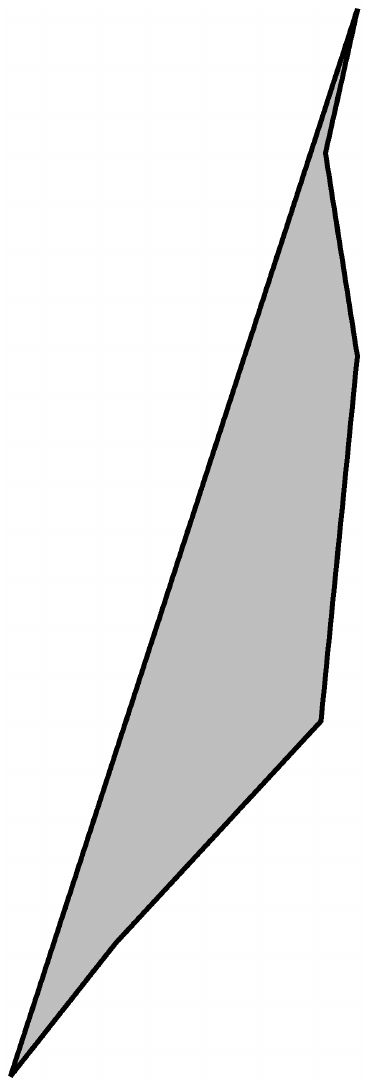}}\qquad\subfigure[]{\includegraphics[height=4cm]{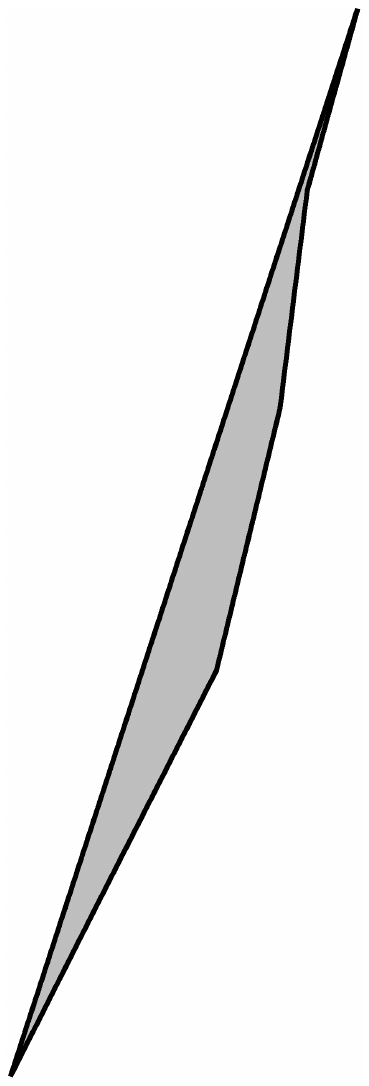}}\qquad\subfigure[$s=1$]{\includegraphics[height=4cm]{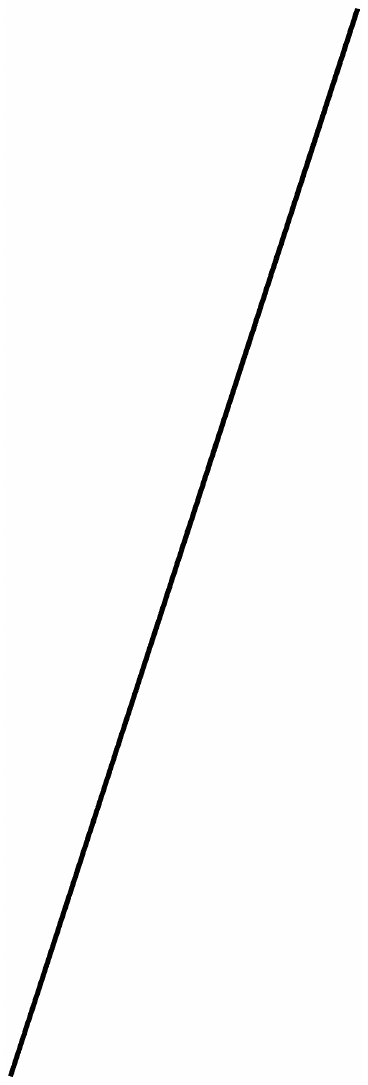}}\caption{The periodgon for $k=5$, $\theta=\pi/10$ and increasing nonzero values of $s$.}\label{fig:periodgon_K5}\end{center}\end{figure}

\begin{figure}\begin{center} 
\subfigure[]{\includegraphics[height=4cm]{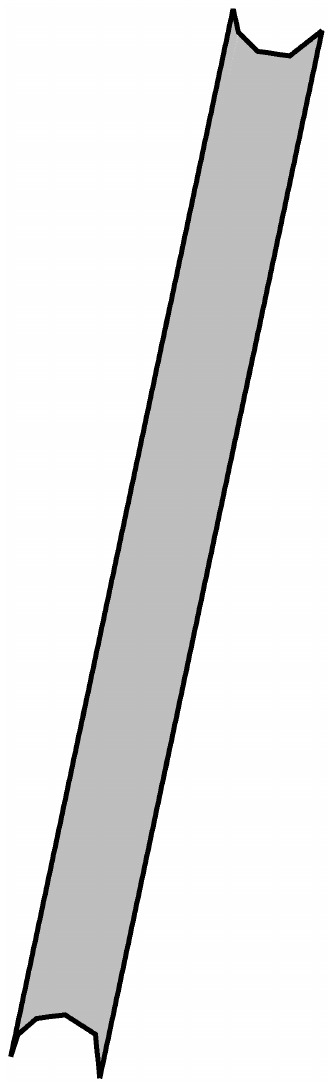}}\qquad\subfigure[]{\includegraphics[height=4cm]{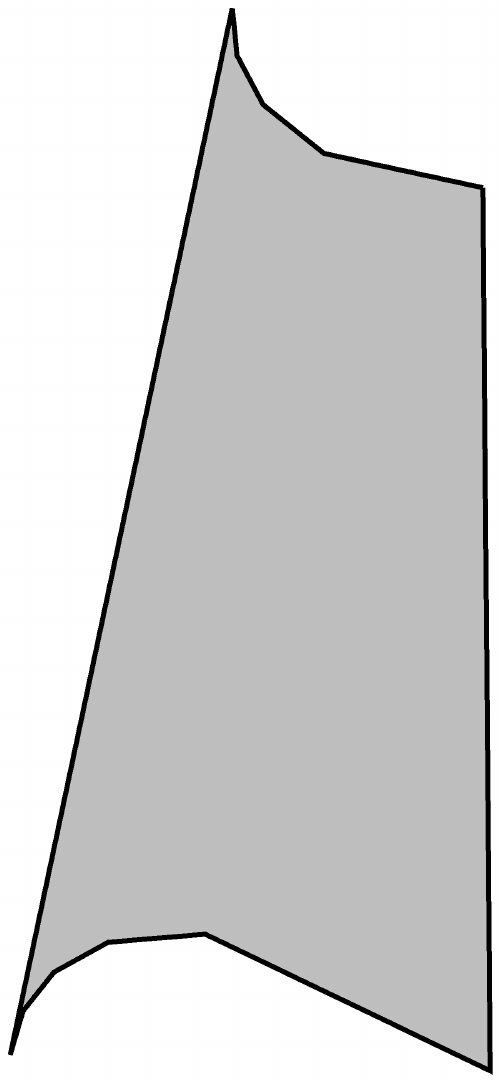}}\qquad\subfigure[]{\includegraphics[height=4cm]{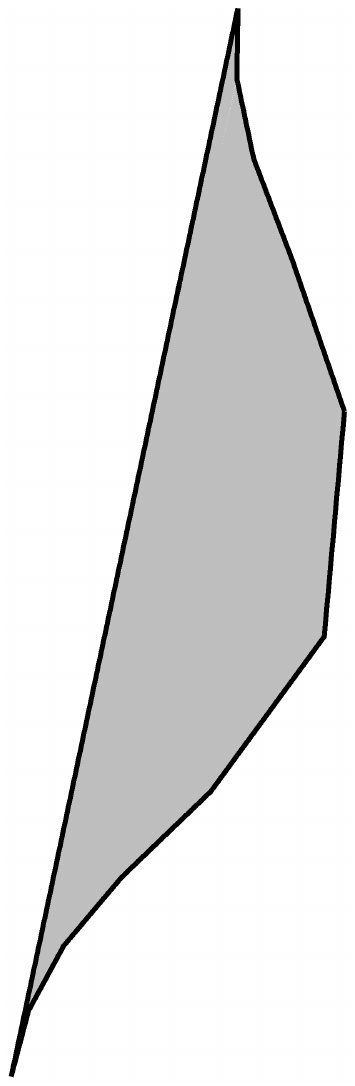}}\qquad\subfigure[]{\includegraphics[height=4cm]{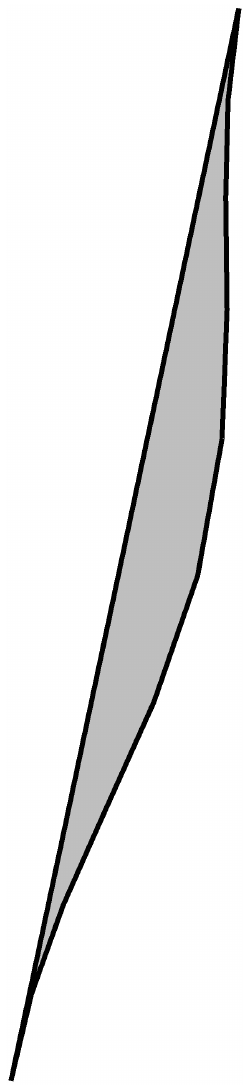}}\caption{The periodgon for $k=10$, $\theta=\pi/15$ and increasing  nonzero values of $s$.}\label{fig:periodgon_K10}\end{center}\end{figure}

\section{The bifurcation diagram of \eqref{vector_field}}\label{sec:bif_diag} 

\begin{theorem} The bifurcation diagram of \eqref{vector_field} consists of
\begin{enumerate} 
\item Real codimension 1 bifurcations of homoclinic loops. These occur when exactly two vertices of the periodgon can be joined by a horizontal segment inside the closed periodgon.  \item Real codimension 2 bifurcations of parabolic points for $\eps_0=0$ and $\Delta=0$, where $\Delta$ is given in \eqref{Delta}.
\item The only bifurcations of higher order are intersections of bifurcations of the two previous types. 
\item All bifurcations occur on ruled hypersurfaces, surfaces or curves  invariant under $(\eps_1, \eps_0)\mapsto (r^{k-1}\eps_1, r^k\eps_0)$ for $r>0$. 
\item On $\S^3$, the boundaries of the codimension 1  bifurcation surfaces are either higher order homoclinic loops or pieces of the curves $\eps_0=0$ and/or $\Delta=0$.
\end{enumerate} \end{theorem}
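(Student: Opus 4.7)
The plan is to handle the five parts separately, using different techniques already developed in the paper: parts (2) and (4) are essentially algebraic/structural, part (1) is a translation of the standard definition of homoclinic bifurcation into the language of the periodgon, and parts (3) and (5) require a boundary analysis of the codimension $1$ bifurcation stratum.

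First I would address part (4), which follows at once from the conic structure described in Section~\ref{sec:Geometry}: the rescaling \eqref{linear_transf} with $A=r\in\R^+$ gives an orbit equivalence between $\dot z = P_\eps(z)$ and $\dot z = P_\eta(z)$ with $\eta=(r^{k-1}\eps_1,r^k\eps_0)$, so every bifurcation locus is a union of such orbits and is therefore ruled. Part (2) then follows from a direct inspection of $P_\eps(z) = z(z^k+\eps_1 z+\eps_0)$: a multiple root occurs either at the origin, exactly when $\eps_0=0$ (producing a factor $z^2$), or among the nonzero roots of $z^k+\eps_1 z+\eps_0$, precisely when the discriminant $\Delta(\eps_1,\eps_0)$ of \eqref{Delta} vanishes; parabolic bifurcations impose the two real conditions ${\rm Re}\,P_\eps'(z_j)={\rm Im}\,P_\eps'(z_j)=0$ at a multiple root, so they are of real codimension $2$.

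For part (1), the key input is Section~\ref{sec:phase_portrait}: since parabolic bifurcations are of real codimension $2$, the only real codimension $1$ bifurcations of the phase portrait of \eqref{vector_field} are homoclinic loops through $\infty$. In the translation-surface coordinate $t$, such a loop corresponds to a horizontal trajectory joining two separatrix points at $\infty$, that is, two vertices of the periodgon, and it bounds the periodic (or parabolic) domain of a singular point exactly when that horizontal segment lies inside the closed periodgon. This is the characterization already used in \cite{CR, KR}, which I would invoke directly. The qualifier ``exactly two'' is forced: if more than two vertices were horizontally aligned inside the periodgon, a second independent homoclinic relation would be imposed and the codimension would exceed $1$.

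Parts (3) and (5) require more work. Part (3) follows because the only elementary bifurcation mechanisms are those of parts (1)--(2), so higher-order strata are transverse intersections of these, with two simultaneous homoclinic loops corresponding to two distinct horizontal chords of the periodgon occurring at once. For part (5), I would argue as follows: along a codimension $1$ surface of homoclinic bifurcations on $\S^3$, the periodgon varies continuously in the generic stratum, so on the boundary of such a surface either (i) a second horizontal chord inside the periodgon appears simultaneously, giving a higher-order homoclinic locus, or (ii) a vertex of the periodgon escapes to infinity, which by the remark following Definition~\ref{def:sepal} occurs exactly when a singular point becomes parabolic, so the boundary lies on $\{\eps_0=0\}\cup\{\Delta=0\}$. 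The main obstacle is to rule out any other way a codimension $1$ homoclinic surface could terminate; this requires the continuity and transversality information contained in the proof of Theorem~\ref{thm:periodgon} near $s=0$, $s=1$, and the exceptional rays $\theta=j\pi/(k-1)$, together with uniform control of the edges of the periodgon away from the parabolic loci.
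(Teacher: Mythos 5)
Your proposal is correct and follows essentially the same route as the paper, which in fact states this theorem without a separate proof and relies on exactly the ingredients you assemble: the dichotomy of bifurcations for polynomial vector fields from Section~\ref{sec:phase_portrait}, the periodgon characterization of homoclinic loops from \cite{CR,KR}, the discriminant formula \eqref{Delta}, the conic structure given by the rescaling \eqref{linear_transf}, and the boundary analysis underlying Theorem~\ref{thm:periodgon}. The only caveat, which you already flag, is that ruling out other terminations of the codimension~$1$ surfaces in part (5) is asserted rather than fully argued in the paper as well, so your sketch is at the same level of rigor as the original.
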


\begin{remark}
Under Conjecture~\ref{conj:periodgon} the periodgon is planar. In the general case, it is a simply connected region of a translation surface. A  homoclinic loop occurs precisely when  two vertices of the periodgon can be joined by a horizontal segment inside that translation surface.\end{remark}

\section{The bifurcation diagram of a generic $2$-parameter unfolding of a parabolic point of codimension $k$ preserving the origin}\label{sec:germ_vf} 

In this section we consider a two-parameter unfolding preserving the origin of a germ of analytic vector field with a parabolic point at the origin, which we can, without loss of generality, suppose to be of the form  $\dot z = z^{k+1}+ Az^{2k+1} + o(z^{2k+1})$. Using the Weierstrass preparation theorem and a scaling in $z$, such an unfolding has the form $\dot z = zQ_\eta(z)(1+g_\eta(z))$, with $Q_\eta(z) = z^k+\sum_{j=0}^{k-1} b_j(\eta)z^j$ depending on $\eta= (\eta_1,\eta_0)\in(\C^2,0)$, and  $g_\eta(z)=g(\eta,z)=O(z)$. The unfolding is generic if 
\begin{equation} \left| \frac{\partial(b_1,b_0)}{\partial( \eta_1,\eta_0)}\right|\neq0.\end{equation}
Dividing $g(z)$ by $zQ_\eta(z)$ allows writing the vector field as
\begin{equation} \dot z = zQ_\eta(z)(1+ R_\eta(z) +zQ_\eta(z)h_\eta(z))\label{prepared}\end{equation} where $R_\eta(z)= \sum_{j=1}^k c_j(\eta) z^j$ is such that $c_j(0)=0$ for $j<k$. We change parameter to $\eps=(\eps_1,\eps_0)= (b_1(\eta),b_0(\eta))$ and we still note the polynomials by $Q_\eps$ and $R_\eps$. 

The vector field in the form \eqref{prepared} is called \emph{prepared}. In particular the eigenvalues at the singular points $z_0=0$ and $z_j$, $j=1, \dots, k$, depend only of the polynomials $Q_\eps$ and $R_\eps$. More precisely, 
\begin{equation}\begin{cases}\lambda_0= \eps_0\\
\lambda_j= z_jQ_\eps'(z_j)(1+R_\eps(z_j)).\end{cases}\end{equation} Note that $1+R_\eps(z_j)$ is close to $1$ for $\eps$ small. 

We want to study the phase portrait of \eqref{prepared} for $z$ in some small fixed disk $\D_r$ and all $\eps$ in a small polydisk $\D_\rho= \{\pl \eps\pl<\rho\}$. By taking a smaller $r$ it is always possible to suppose that the vector field is defined on $\partial \D_r$. The radius $r$ is chosen sufficiently small so that the vector field has the same behavior as $\dot z= z^{k+1}$ near $\partial \D_r$ (see Figure~\ref{fleur}). Similarly, $\rho$ is chosen sufficiently small so that the $k+1$ singular points bifurcating from the origin remain far from  $\partial \D_r$.

The bifurcation diagram has a conic like structure. This is seen by writing the parameters as
\begin{equation}
\eps= \left(-k\zeta^{k-1}(1-s)^{k-1} e^{-i(k-1)\alpha}, (k-1)\zeta^ks^ke^{i(\theta-k\alpha)}\right)\label{par_eps}\end{equation}
with $s\in[0,1]$, $\theta\in[0,2\pi]$, $\alpha\in[0,2\pi]$ and $\zeta\in [0,\rho)$. Then a rescaling $(z,t)\mapsto(\frac{z}{\zeta}, \zeta^kt)$ brings the system to the form
\begin{equation} 
\frac{dZ}{dt}=U(Z) +O(\zeta),\label{poly_perturb}\end{equation}
where $$U(Z)= Z\left(Z^k-k(1-s)^{k-1}e^{-i(k-1)\alpha}+(k-1)s^ke^{i(\theta-k\alpha)}\right),$$
i.e to a small pertubation of the system \eqref{3_par} studied above. However, the scaling transforms the domain $\D_r$ into $\D_{r/\zeta}$ which tends to $\C$ when $\zeta\to 0$.

\begin{figure}\begin{center}\includegraphics[width=4.5cm]{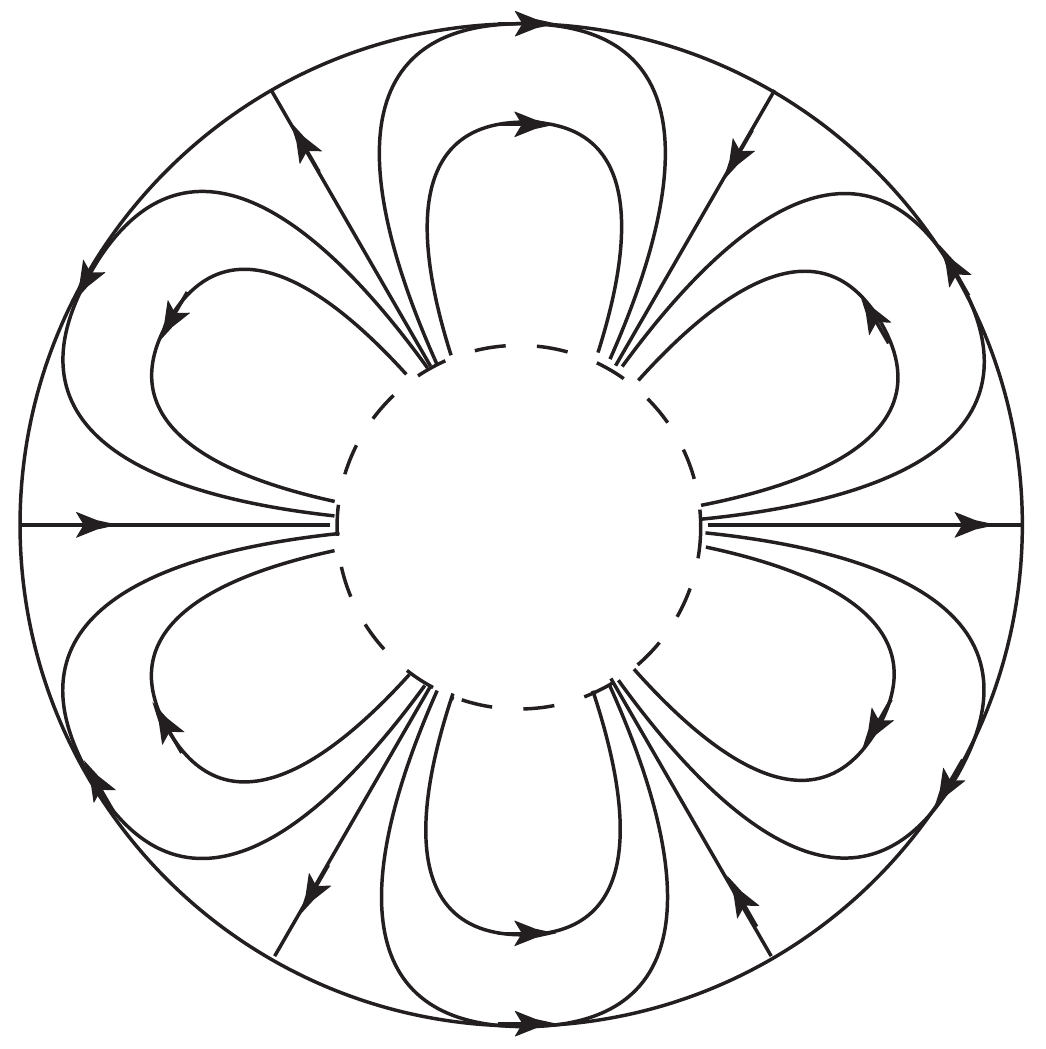}\caption{The phase portrait of \eqref{prepared} close to $\partial \D_r$ for $r$ small and $\rho\ll r$.}\label{fleur} \end{center}\end{figure}

\begin{proposition}\label{prop:parabolic} Bifurcations of parabolic points occur along 
\begin{enumerate} 
\item $\eps_0 = 0$\emph{:} the bifurcation has codimension 1 if $\eps_1\neq0$, and codimension $k$ otherwise;
\item $\Delta(\eps)=0$, where $\Delta(\eps)$ is the discriminant of $Q_\eps(z)$. Note that \begin{equation} 
\Delta(\eps)
=(-1)^{\lfloor\frac{k}2\rfloor}(k-1)^{k-1}k^k\left[\big(\tfrac{\eps_0}{k-1}\big)^{k-1}-\big(-\tfrac{\eps_1}{k}\big)^k+ o\left(\pl \eps\pl^{k(k-1)}\right)\right].\label{Delta_unfold}
\end{equation}
The bifurcation is of codimension $1$ as soon as $\eps\neq0$. \end{enumerate} \end{proposition}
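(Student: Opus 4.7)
The plan is to exploit the prepared form \eqref{prepared} to reduce both parts of the proposition to algebraic conditions on $Q_\eps$, and then to derive \eqref{Delta_unfold} by treating $Q_\eps$ as a small perturbation of the model polynomial $z^k+\eps_1z+\eps_0$, whose discriminant is given by \eqref{Delta}. First I would locate the singular points in $\D_r$: writing $P_\eps(z)=zQ_\eps(z)F_\eps(z)$ with $F_\eps(z)=1+R_\eps(z)+zQ_\eps(z)h_\eps(z)$, one has $F_\eps(0)=1$, and shrinking $\rho$ if necessary makes $F_\eps$ nonvanishing on $\D_r$ for $\eps\in\D_\rho$. The singular points are therefore $z_0=0$ together with the roots of $Q_\eps$, and a point is parabolic precisely when the corresponding factor of $P_\eps$ vanishes to higher order.

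For item (1), the multiplicity of $0$ as a root of $P_\eps$ equals $1$ plus its multiplicity $m$ as a root of $Q_\eps$, since $F_\eps(0)=1$. Because $Q_\eps(0)=\eps_0$ and $Q_\eps'(0)=\eps_1$, one has $m\geq 1$ iff $\eps_0=0$; if moreover $\eps_1\neq 0$ then $m=1$, and the origin is a double root of $P_\eps$, giving a codimension $1$ parabolic point. If $\eps=0$, then the parameter change $\eps=(b_1(\eta),b_0(\eta))$ forces $\eta=0$ and, by construction of the Weierstrass preparation, $Q_0(z)=z^k$, so $P_0(z)=z^{k+1}(1+O(z))$ has a root of multiplicity $k+1$ at the origin, a codimension $k$ parabolic point.

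For item (2), a nonzero singular point $z_j$ satisfies $\lambda_j=z_jQ_\eps'(z_j)F_\eps(z_j)=0$ iff $Q_\eps'(z_j)=0$, i.e.\ iff $z_j$ is a multiple root of $Q_\eps$; this is precisely the locus $\{\Delta(\eps)=0\}$ where $\Delta$ is the discriminant of $Q_\eps$. To obtain \eqref{Delta_unfold}, I would expand the discriminant as a polynomial in the coefficients of $Q_\eps=z^k+\eps_1z+\eps_0+\sum_{j=2}^{k-1}b_j(\eps)z^j$: the contribution from $(\eps_1,\eps_0)$ alone is $\Delta_0(\eps_1,\eps_0)$ given by \eqref{Delta}, while every remaining monomial contains at least one factor $b_j(\eps)$ with $j\geq 2$. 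Since $b_j(0)=0$ and $b_j$ is smooth, Taylor expansion gives $b_j(\eps)=O(|\eps_0|+|\eps_1|)$, which under the scaling $\eps_0\sim\zeta^k$, $\eps_1\sim\zeta^{k-1}$, $\pl\eps\pl\sim\zeta$ of Section~\ref{sec:Geometry} is $O(\zeta^{k-1})$. Combined with the quasi-homogeneity of the discriminant (total weight $k(k-1)$, where the coefficient of $z^j$ in $Q_\eps$ carries weight $k-j$), substituting $b_j(\eps)$ for the coefficient of $z^j$ with $j\geq 2$ adds weight at least $(k-1)-(k-j)=j-1\geq 1$ to each correction monomial, so every such monomial has weight strictly greater than $k(k-1)$, producing the remainder $o(\pl\eps\pl^{k(k-1)})$. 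Codimension $1$ for $\eps\neq 0$ with $\Delta(\eps)=0$ then follows because the zero set of $\Delta_0$ is smooth away from the origin (where on $\S^3$ it is the $(k,k-1)$-torus knot of Section~\ref{sec:Geometry}), and this transversality is preserved by the strictly lower-order perturbation.

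The main obstacle I anticipate is precisely the bookkeeping that yields the sharp $o(\pl\eps\pl^{k(k-1)})$ rather than the weaker $O(\pl\eps\pl^{k(k-1)})$ that comes out of a naive degree count on the discriminant polynomial: this requires pairing the quasi-homogeneous weighting with the genuine first-order vanishing of each $b_j$ at $\eps=0$, as sketched above.
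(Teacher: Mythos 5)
Your argument is correct and is essentially the reasoning the paper leaves implicit: Proposition~\ref{prop:parabolic} is stated without proof, and your reduction via the nonvanishing of $1+R_\eps+zQ_\eps h_\eps$ on $\D_r$ to root multiplicities of $zQ_\eps$, together with the quasi-homogeneous weight count (weight $k-j$ for the coefficient of $z^j$, total weight $k(k-1)$, and the extra weight $j-1\geq 1$ gained because $b_j(\eps)=O(|\eps_1|+|\eps_0|)=O(\pl\eps\pl^{k-1})$ for $j\geq 2$) is exactly the right mechanism for \eqref{Delta_unfold}. The only point worth tightening is that ``codimension $1$'' in item (2) refers, as in item (1), to the codimension of the parabolic point (a double root of $Q_\eps$, no worse); this is equivalent to the smoothness of $\{\Delta=0\}$ away from $\eps=0$ that you establish, since the discriminant hypersurface is singular precisely where there is a root of multiplicity $\geq 3$ or two distinct double roots, and both are excluded for small $\eps\neq0$ by your perturbation estimate applied to the rescaled trinomial.
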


It is possible to generalize the definition of periodgon to this case. 

\begin{definition}\cite{KR} Let $\dot z = \omega(z)$ be a holomorphic vector field in $\D_r$ with all singular points simple. 
\begin{enumerate} 
\item The \emph{periodic domain in $\D_r$} of a singular point $z_j$ is the union of the periodic trajectories surrounding $z_j$ for the rotated vector field $\dot z = e^{i\arg\nu_j} \omega(z)$, where $\nu_j= \frac{2\pi i}{\lambda_j}$. Its boundary is tangent to $\D_r$ (see Figure~\ref{Gen_periodic_domains} for some periodic domains in $\D_r$). The periodic domains of different points are disjoint. 
\item Let $t=\int \frac{dz}{\omega(z)}$. The \emph{ generalized periodgon} is the image in $t$-space of the complement in $\D_r$ of the union of the periodic domains in $\D_r$ (see Figure~\ref{gen_periodgon}). \end{enumerate}\end{definition}

\begin{figure}\begin{center}\subfigure[]{\includegraphics[width=3.6cm]{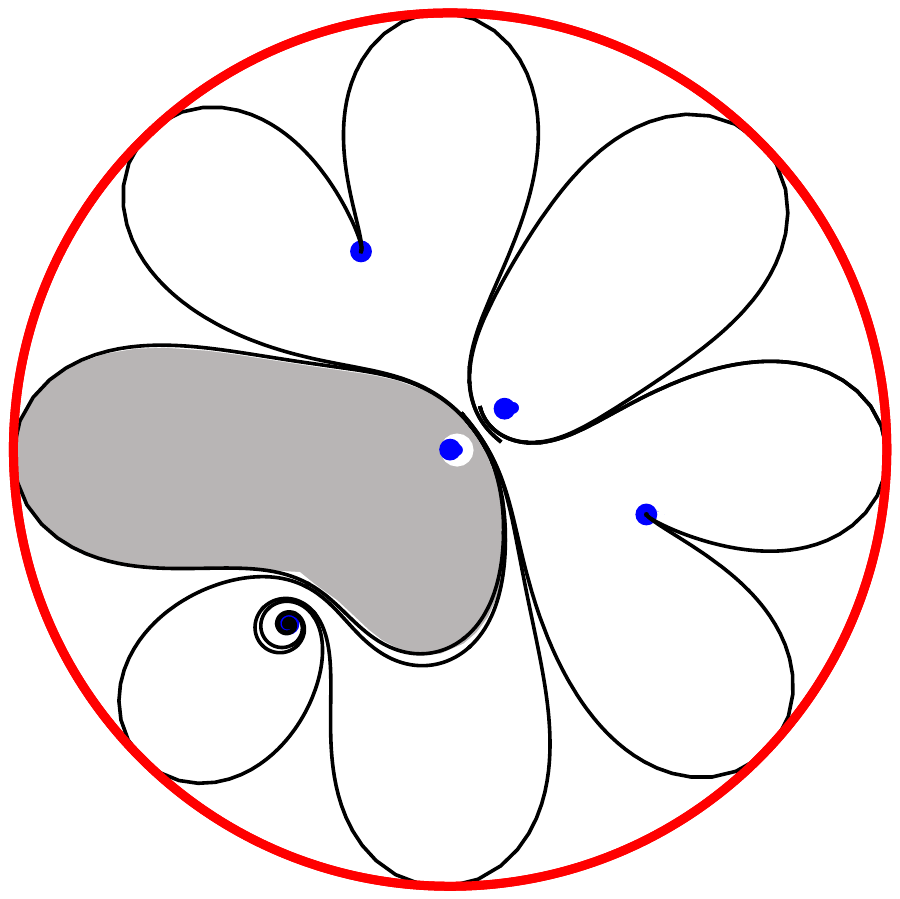}}\qquad\subfigure[]{\includegraphics[width=3.6cm]{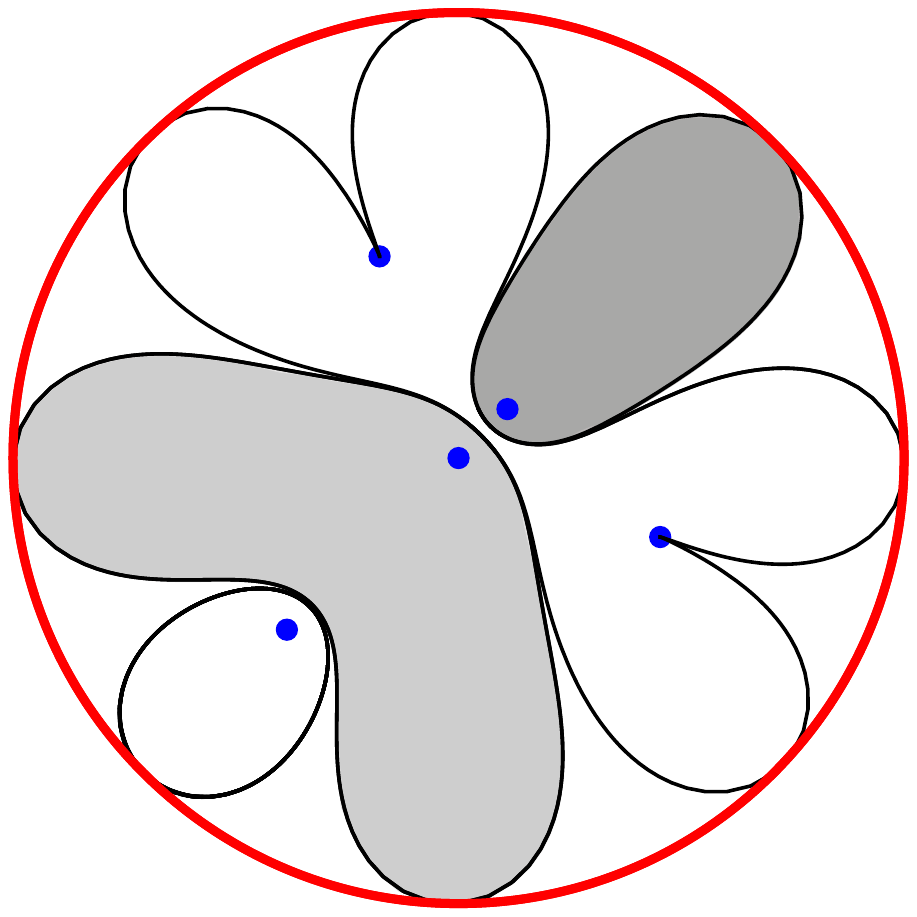}}\qquad\subfigure[]{\includegraphics[width=3.6cm]{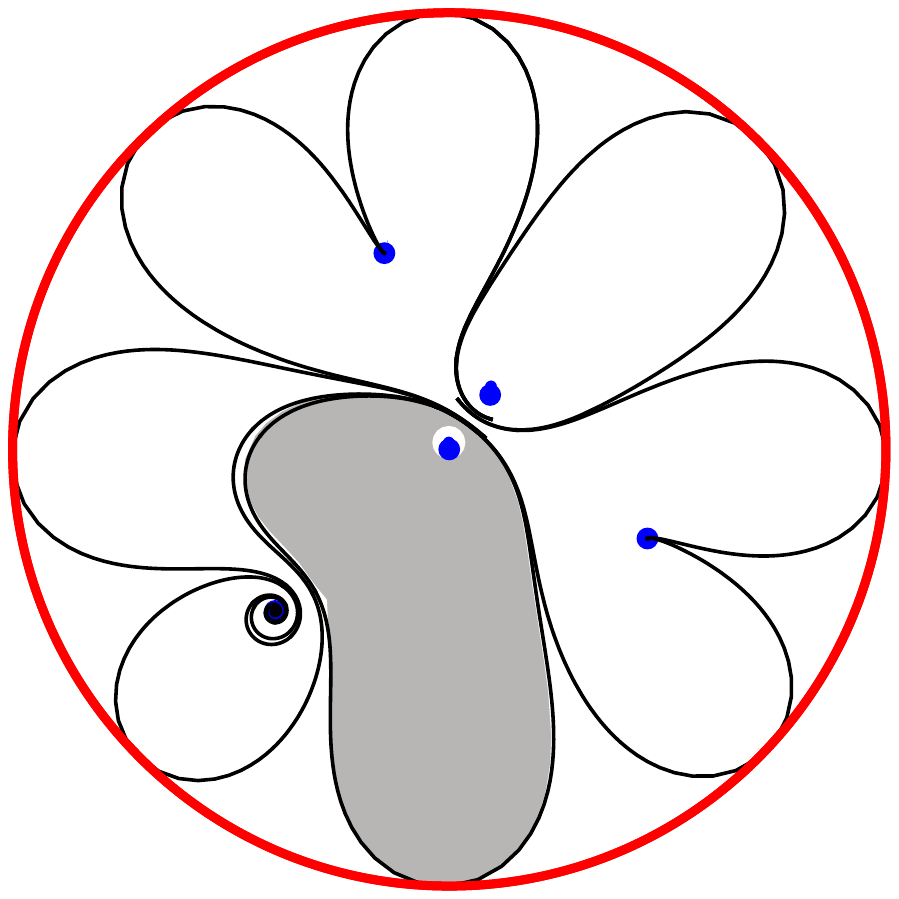}}\caption{Some periodic domains in $\D_r$. The case (b) shows a bifurcation of the generalized periodgon between (a) and (c). }\label{Gen_periodic_domains} \end{center}\end{figure}

\begin{figure}\begin{center}\subfigure[]{\includegraphics[width=5cm]{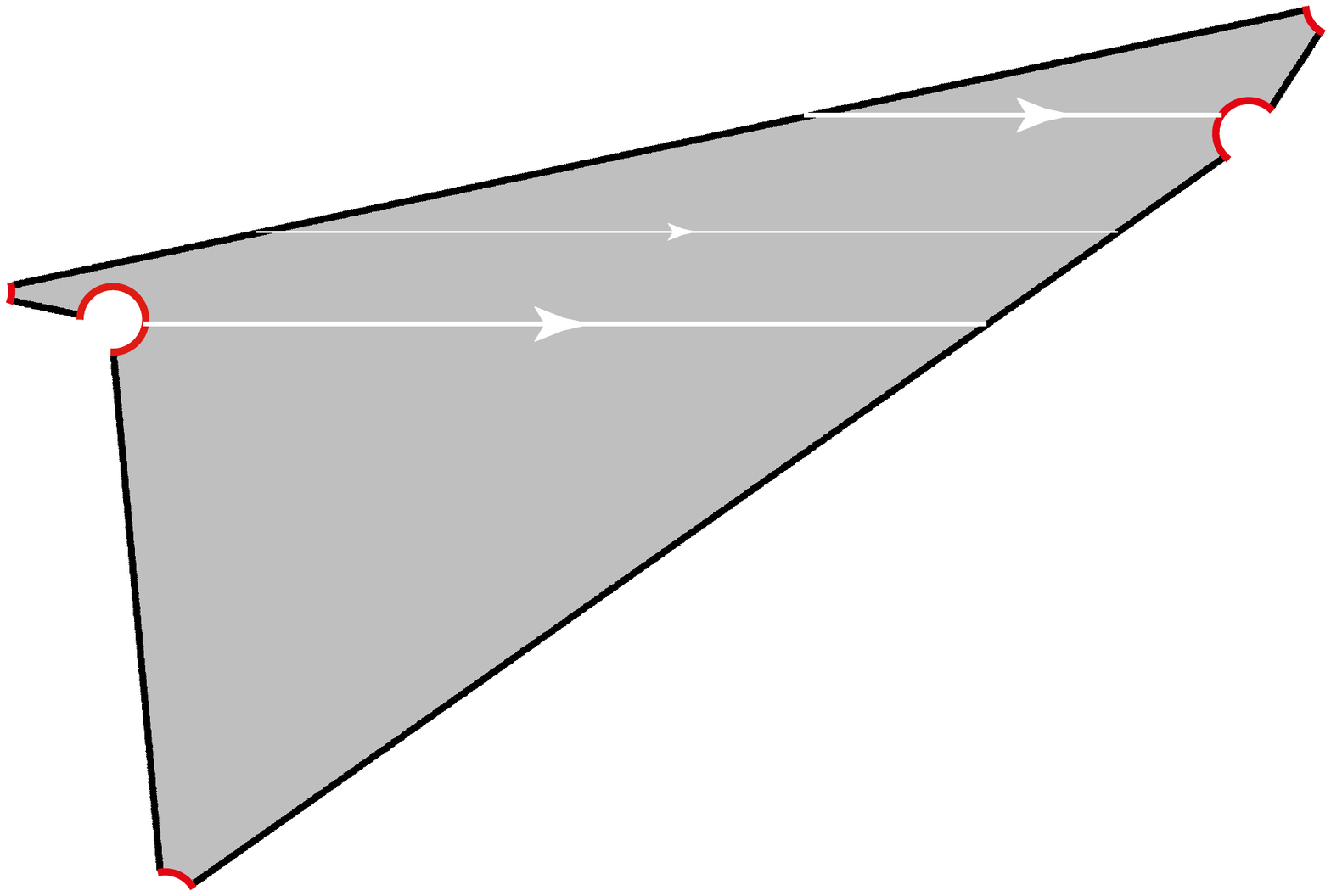}}\qquad\quad\subfigure[]{\includegraphics[width=5.5cm]{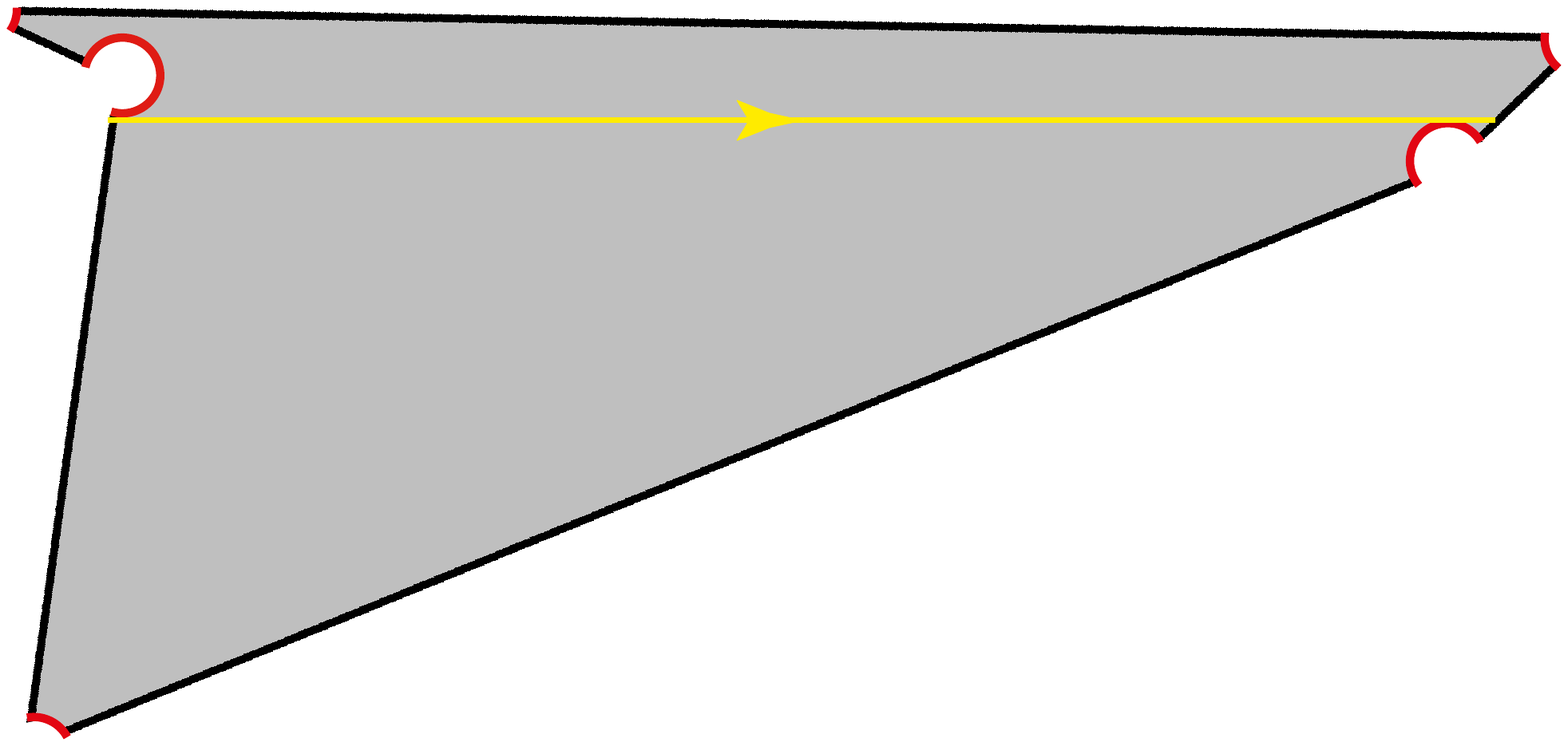}}\\
\subfigure[]{\includegraphics[width=5.5cm]{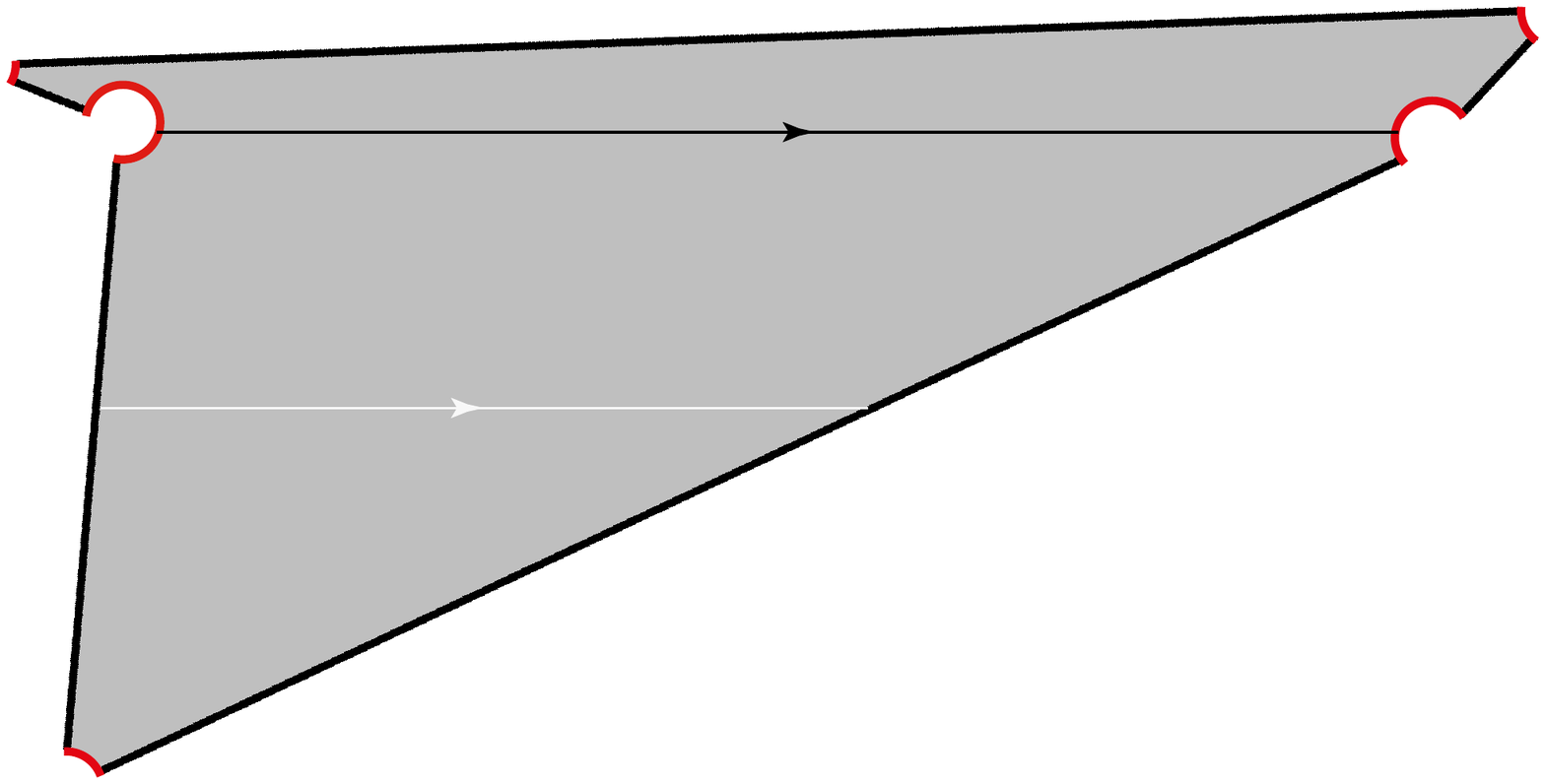}}\qquad\quad\subfigure[]{\includegraphics[width=5.5cm]{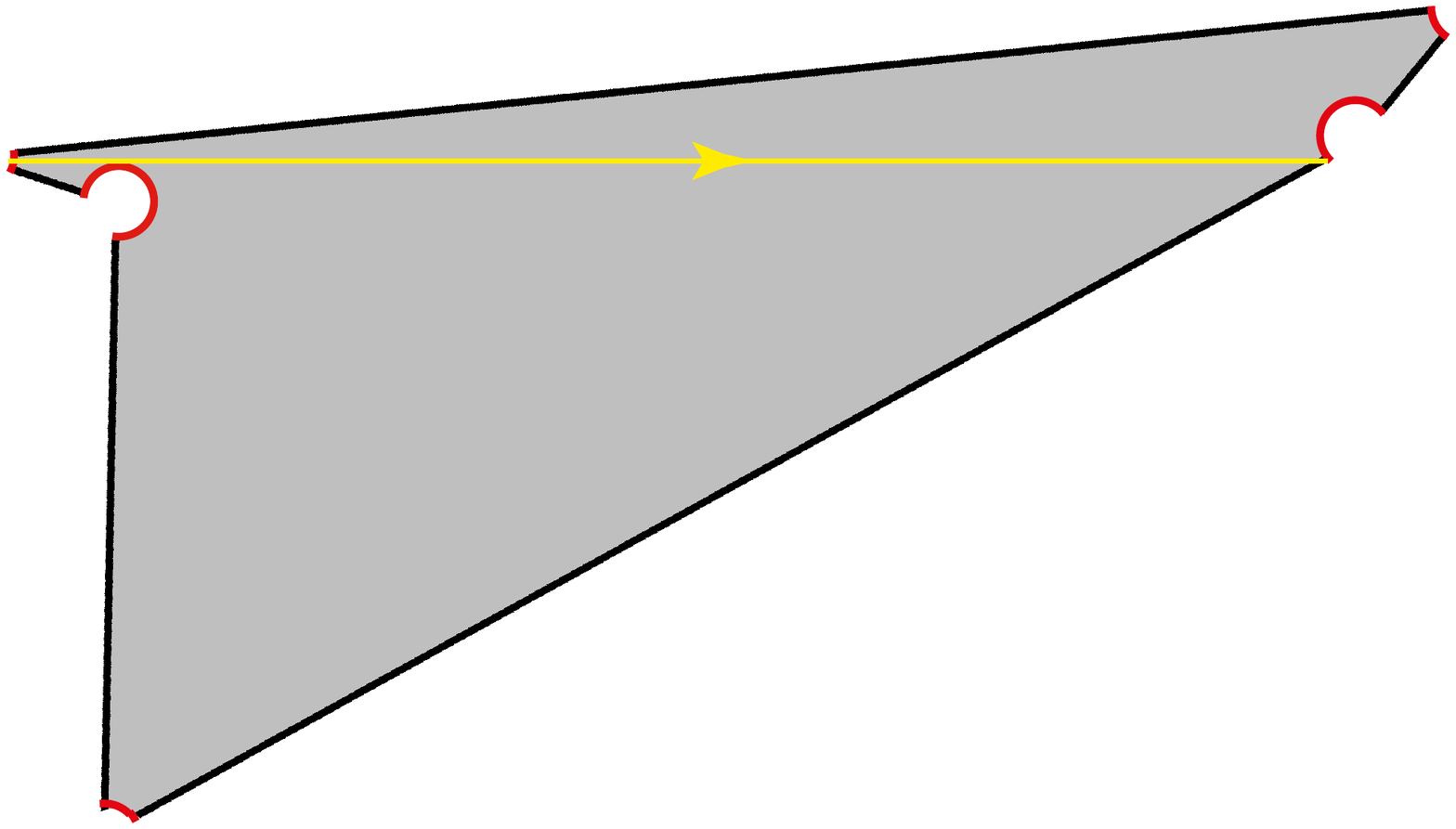}}\caption{A generalized periodgon for $k=4$ and different values of the rotational parameter $\alpha$. The red parts of the boundary are images in $t$-space of arcs of $\partial \D_r$. In (c) a separating trajectory in black. The limit positions for separating trajectories are curves of double tangency with the boundary as in (b) and (d). }\label{gen_periodgon} \end{center}\end{figure}

For generic values of the parameters there are, apart from the singular points, three kinds of generic trajectories inside $\D_r$:
\begin{enumerate}
\item Trajectories crossing $\partial \D_r$ with $\alpha$- or $\omega$-limit at a singular point (thick white trajectories in Figure~\ref{gen_periodgon}(a)); 
\item Trajectories with $\alpha$- and $\omega$-limit at a singular point (thin white trajectories in Figure~\ref{gen_periodgon}(a) and (c));
\item \emph{Separating trajectories} entering and exiting $\D_r$ (black trajectory in Figure~\ref{gen_periodgon}(c)). 
\end{enumerate} 
Non generic trajectories will have multiple tangency with the boundary $\partial \D_r$ (yellow trajectories in Figure~\ref{gen_periodgon}(b) and (d)). Several of these trajectories replace the homoclinic loops in the bifurcation diagram. 
 
 \begin{theorem} The bifurcation diagram of \eqref{prepared} consists of:
 \begin{enumerate} 
 \item Bifurcations of parabolic points as described in Proposition~\ref{prop:parabolic}. 
 \item Bifurcations of multiple tangencies of trajectories with the boundary $\partial \D_r$. Generically these are double tangencies. The corresponding bifurcation surfaces either  limit regions in the parameter space in which there exist separating trajectories (see Figure~\ref{gen_periodgon}) or regions where the generalized periodgon changes shape (see Figure~\ref{Gen_periodic_domains}(b)). \end{enumerate}\end{theorem}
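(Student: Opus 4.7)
The plan is to handle the two items of the theorem separately, using the prepared form \eqref{prepared} and the rescaling \eqref{par_eps}--\eqref{poly_perturb} that realizes the germ as an $O(\zeta)$ perturbation of the polynomial model \eqref{3_par} on the growing disk $\D_{r/\zeta}$. The bulk of the work has already been done in Sections~\ref{sec:periodgon}--\ref{sec:bif_diag} for the model system, so the present argument is mostly one of passing from the model to the germ.

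For item (1), I would simply read off the eigenvalues from the prepared form: $\lambda_0=\eps_0$ and $\lambda_j = z_jQ_\eps'(z_j)(1+R_\eps(z_j))$ with $1+R_\eps(z_j)$ close to $1$ on $\D_r$. Hence a singular point of \eqref{prepared} is parabolic if and only if $\eps_0=0$ (the origin then merges with another root) or $Q_\eps$ has a multiple nonzero root, i.e.\ $\Delta(\eps)=0$. The discriminant expansion \eqref{Delta_unfold} follows by treating the higher coefficients $b_j(\eps)$, $j\geq 2$, as higher-order perturbations of the two controlling parameters $\eps_1,\eps_0$, and the codimension assertions of Proposition~\ref{prop:parabolic} follow, the value $\eps=0$ being checked separately by direct count of the multiplicity $k+1$ singularity at the origin.

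For item (2), outside the parabolic locus all singular points are simple and structural stability reduces the question to a classification of trajectory arrangements with respect to $\partial\D_r$. Since the vector field looks like $\dot z = z^{k+1}$ near $\partial\D_r$ (Figure~\ref{fleur}), the boundary is transversal to the flow except at the $2k$ tangency points inherited from this flower, and each trajectory in $\D_r$ is either (i) a transversal orbit entering or exiting $\D_r$ once with its other end at a singular point, (ii) an orbit with both limits at singular points, lying in a periodic or parabolic domain, or (iii) a separating trajectory crossing $\partial\D_r$ transversally at both ends. Structural stability fails precisely when a separatrix of some singular point acquires a tangency with $\partial\D_r$, which is a real codimension one condition on $\eps$, generically a simple (double) tangency. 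Such a tangency either modifies the boundary of a periodic or parabolic domain, changing the shape of the generalized periodgon as in Figure~\ref{Gen_periodic_domains}(b), or creates/destroys a component of the set of separating trajectories as in Figure~\ref{gen_periodgon}(c); this dichotomy is exhaustive because the boundary of a generalized periodic/parabolic domain and the endpoints of a separating trajectory are the only two roles a separatrix can play relative to $\partial\D_r$.

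The hard part will be showing that no further bifurcations arise, i.e.\ that the $O(\zeta)$ perturbation in \eqref{poly_perturb} cannot create bifurcations away from the loci identified. For $\zeta$ bounded away from $0$ this reduces by standard structural stability to the description of the polynomial model in Section~\ref{sec:bif_diag}; the subtle step is a uniform argument as $\zeta\to 0$, when the rescaled domain $\D_{r/\zeta}$ sweeps out all of $\C$ and the polynomial model may have homoclinic loops through $\infty$ that do not yet close inside $\D_{r/\zeta}$. I would treat this by a compactness argument on the parameter unit sphere $\|\eps\|=1$ together with a boundary-layer analysis near $\zeta=0$ using the Leau--Fatou description of the multiplicity $k+1$ germ, matching the homoclinic loops of the model that do close inside $\D_{r/\zeta}$ with the tangency bifurcations in $\D_r$ of the germ, and letting the remaining loops escape to $\partial\D_r$ without producing a bifurcation of the germ. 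This is where the present theorem is slightly weaker than its polynomial counterpart, and is also the step most in need of a careful write-up.
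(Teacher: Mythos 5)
Your proposal follows essentially the same route as the paper: the prepared form and eigenvalue formulas for item (1), and for item (2) the rescaling \eqref{par_eps}--\eqref{poly_perturb} to an $O(\zeta)$ perturbation of the polynomial model together with the classification of trajectories by their position relative to $\partial\D_r$, so that the only non-parabolic bifurcations are double tangencies of separatrices with the boundary, which either reshape the generalized periodgon or create/destroy separating trajectories. In fact the paper states this theorem without any written proof, relying on the preceding setup of Section~\ref{sec:germ_vf}; your sketch is consistent with that setup and goes further by correctly isolating the one genuinely delicate point that the paper also leaves untreated, namely the uniformity of the correspondence with the polynomial model as $\zeta\to 0$, where homoclinic loops of the model that do not close inside $\D_{r/\zeta}$ must be matched with (or shown not to produce) tangency bifurcations of the germ.
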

 
 \noindent{\bf Question:} It would be interesting to identify a unique normal form in which the parameters are uniquely defined (canonical). This was done in \cite{KR} when we drop the constraint that the origin is fixed in the unfolding. So far, we have not been able to find such a normal form  in which the origin would be fixed. Such a normal form would lead to a classification theorem of germs of unfoldings \eqref{prepared} under conjugacy.

\section*{Acknowledgements} The author is grateful to Martin Klime\v{s} for stimulating discussions.

\end{document}